\newtheorem{theorem}{Theorem}[section] 
\newtheorem{claim}[theorem]{Claim}
\newtheorem{tmt}[theorem]{The Main Theorem}
\newtheorem{lemma}[theorem]{Lemma}
\theoremstyle{definition}
\newtheorem{definition}[theorem]{Definition}
\newtheorem{conjecture}[theorem]{Conjecture}
\theoremstyle{remark}
\newtheorem{remark}[theorem]{Remark}
\newcommand{\rk}{\mathrm{rk}}
\newcommand{\CH}{{\rm CH}}
\newcommand{\SH}{{\rm SH}}
\newcommand{\Ord}{{\rm Ord}}
\newcommand{\Hom}{{\rm Hom}}
\newcommand{\MA}{{\rm MA}}
\newcommand{\Df}{{\rm Df}}
\newcommand{\Ded}{{\rm Ded}}
\newcommand{\otp}{{\rm otp}}
\newcommand{\qd}{{\rm q.d.}}
\newcommand{\Ext}{{\rm Ext}}
\newcommand{\NPT}{{\rm NPT}}
\newcommand{\Th}{{\rm Th}}
\newcommand{\ZFC}{{\rm ZFC}}
\newcommand{\tp}{{\rm tp}}
\newcommand{\cf}{{\rm cf}}
\newcommand{\Dom}{{\rm Dom}}
\newcommand{\Aut}{{\rm Aut}}
\newcommand{\df}{{\rm df}}
\newcommand{\cov}{{\rm cov}}
\newcommand{\Rang}{{\rm Rang}}
\newcommand{\wilog}{{\rm without loss of generality}}
\newcommand{\then}{{\underline{then}}}
\newcommand{\oor}{{\underline{or}}}
\newcommand{\Then}{{\underline{Then}}}
\newcommand{\Iff}{{\underline{iff}}}
\newcommand{\mn}{{\medskip\noindent}}
\newcommand{\sn}{{\smallskip\noindent}}
\newcommand{\bfF}{\mathbf{F}}
\newcommand{\bfL}{\mathbf{L}}
\newcommand{\bfS}{\mathbf{S}}
\newcommand{\bfV}{\mathbf{V}}
\newcommand{\bbL}{\mathbb{L}}
\newcommand{\bbN}{\mathbb{N}}
\newcommand{\bbQ}{\mathbb{Q}}
\newcommand{\bbZ}{\mathbb{Z}}
\newcommand{\cH}{\mathscr{H}}
\newcommand{\cL}{\mathscr{L}}
\newcommand{\cP}{\mathscr{P}}
\newcommand{\cT}{\mathscr{T}}
\newcommand{\clP}{\mathcal{P}}
\newcommand{\gC}{\mathfrak{C}}
\newcommand{\rest}{\restriction}
\newcommand{\LL}{\langle}
\newcommand{\RR}{\rangle}
\def\mathunderaccent#1#2 {\let\theaccent#1\skewfactor#2
\mathpalette\putaccentunder}
\def\putaccentunder#1#2{\oalign{$#1#2$\crcr\hidewidth
\vbox to.2ex{\hbox{$#1\skew\skewfactor\theaccent{}$}\vss}\hidewidth}}
\newbox\noforkbox \newdimen\forklinewidth
\noforkbox\hbox{\box1\box0\relax}
\def\unionstick{\mathop{\copy\noforkbox}\limits}
\def\nonfork#1#2_#3{#1\unionstick_{\textstyle #3}#2}
\def\nonforkin#1#2_#3^#4{#1\unionstick_{\textstyle #3}^{\textstyle
    #4}#2}
\newbox\doesforkbox
\doesforkbox\hbox{\box1\box0\relax}
\def\nunionstick{\mathop{\copy\doesforkbox}\limits}
\def\fork#1#2_#3{#1\nunionstick_{\textstyle #3}#2}
\def\forkin#1#2_#3^#4{#1\nunionstick_{\textstyle #3}^{\textstyle
    #4}#2}
\newcommand{\stickT}{%
\setbox255=\hbox{\raise1ex\hbox{$\hspace{0.2pt}\,\bullet\,$}}
\mathord{\rlap{\hbox to\wd255{\hss\hbox{$|$}\hss}}
\box255}
}
\newcommand{\stickS}{%
\setbox255=\hbox{\raise0.6ex\hbox{$\scriptstyle\bullet$}}
\mathord{\rlap{\hbox to\wd255{\hss\hbox{$\scriptstyle|$}\hss}}
\box255}
}
\newenvironment{PROOF}[2][\proofname.]
   {\begin{proof}[#1]}
   {\end{proof}}
\begin{document}
\makeatletter\def\shfiuwefootnote{\gdef\@thefnmark{}\@footnotetext}\makeatother\shfiuwefootnote{Version 2022-08-08\_2. See \url{https://shelah.logic.at/papers/E40/} for possible updates.}

\title {A collection of abstracts of Shelah's Papers \\
 E40}
\author {Saharon Shelah}
\address{Einstein Institute of Mathematics\\
Edmond J. Safra Campus, Givat Ram\\
The Hebrew University of Jerusalem\\
Jerusalem, 9190401, Israel\\
 and \\
 Department of Mathematics\\
 Hill Center - Busch Campus \\ 
 Rutgers, The State University of New Jersey \\
 110 Frelinghuysen Road \\
 Piscataway, NJ 08854-8019 USA}
\email{shelah@math.huji.ac.il}
\urladdr{http://shelah.logic.at}
\thanks{The author thanks Alice Leonhardt for the beautiful typing.
  References like \cite[Th0.2=Ly5]{Sh:950} means the label of Th.0.2
  is y5.  The reader should note that the version in my website is
  usually more updated than the one in the mathematical archive.  
First typed May 1987}



\subjclass[2010]{Primary: 03Cxx, 03Exx; Secondary: 03-00}

\keywords {model theory, set theory}

\date {August 8, 2022}

\begin{abstract}
There are here abstracts of most of the papers up to 143 
(and 217), mostly written in 1980/81 with Grossberg.  Also more
details than in the originals were added in B5, B8, B217 and C2, C3
were added (the Cxx are representations of the authors works).
\end{abstract}

\maketitle
\numberwithin{equation}{section}
\newpage

\section {Notation}

\bigskip
\begin{enumerate}
    \item[(A)]
    \begin{enumerate}
        \item[(a)]   $\alpha,\beta,\gamma,\delta,\zeta,\xi,i,j$ denote ordinals; usually $\delta$ denotes a limit ordinal, $\omega$ the first infinite ordinal
\sn
        \item[(b)]  $\lambda,\chi,\mu,\kappa$ stands for infinite cardinals
\sn
        \item[(c)]  $n,m,k,l$ natural numbers
\sn
        \item[(d)]  $\lambda^\kappa$ is cardinal exponentiation, 
        $\lambda^{< \kappa} = \sum\limits_{\mu < \kappa} \lambda^\mu$, 
        $\beth(\lambda,\alpha) = \sum\limits_{\beta < \alpha} 2^{\beth(\lambda,\beta)} + \lambda$, 
        $\beth_\alpha = \beth(\aleph_0,\alpha)$
\sn
        \item[(e)] ${}^\alpha \lambda = \{f:if:\alpha \rightarrow \lambda\}$ sometimes  we call the member sequences
\sn
        \item[(f)]  ${}^{\alpha >}\lambda = \bigcup\limits_{\beta < \alpha}  {}^\beta \lambda$
\sn
        \item[(g)]  $\eta,\nu$ stands for sequences (or ordinals, usually)
\sn
        \item[(h)]  GCH is the Generalized Continuum Hypothesis
    \end{enumerate}
\sn
    \item[(B)]
    \begin{enumerate}
        \item[(a)]  $M,N$ (perhaps with index) are models
\sn
        \item[(b)]  $\Th(M)$ - the collection of first order sentences which are true in $M$.  All languages are with equality.
\sn
        \item[(c)]  a model is $\lambda$-like if it has cardinality $\lambda$ and one of the relations of the model is an order $<$ such that every initial segment has cardinality less than $\lambda$, but not the whole.
        
        \item[(d)] vocabularies are denoted by $\tau$.
    \end{enumerate}
\end{enumerate}
\newpage 

\section {abstracts} 
\bigskip

\noindent
(B1) \underline{Stable Theories}, IJM 7(1969), 186-202.

The stability spectrum (i.e. the set of cardinals in which a theory $T$ is
stable) is characterized for countable theories and investigated in the
general case; ranks for one formula are investigated, as well as existence
of indiscernibles and prime among e.g. $|T|^+$-saturated models.  Also
categoricity of elementary and of pseudo-elementary classes is investigated
and lower bound on the number of non-isomorphic models of power 
$\aleph_\alpha$ is given for unstable and unsuperstable
$T$ (they are $|\alpha - \beta|$, $|(\alpha - \beta)/\omega|$ respectively, where
$|T| = \aleph_\beta$).
\bigskip

\noindent
(B2) \underline{Note on a Min-Max Problem of Leo Moser}, J. Comb. Th. A 6(1969), 187-202

Moser asks how a pair of $(n+1)$-sided dice should be loaded (identically)
so that on throwing the dice the frequency of the most frequently occurring
sum is as small as possible.  G.F. Clements finds a relative minimum,
conjecturing that it is always the solution.  This conjecture is disproved
for $n=3$.
\bigskip

\noindent
(B3) \underline{Finite diagrams stable in power}, AML 2(1970), 69-118. 

Let $T$ be a complete first order theory, $D_m(T)$ the set of complete
$m$-type (consistent with $T$), $D(T) = \bigcup\limits_{m} D_m(T)$ and let
$D \subseteq D(T)$.  Let 

\begin{equation*}
\begin{array}{clcr}
K_D = \{M: &M\text{ is a model of } T \text{ such that if }
m < \omega \text{ and } \bar a \in {}^m M, \\
  &\text{ then the type of } \bar a \text{ (in } M \text{ of course) 
belongs to } D\}.
\end{array}
\end{equation*}

\mn
Assume $\gC_D$ is a $D$-monster, that is: it is a model of $T$, 
in $K_D$ and is a $\bar \kappa$-sequence homogeneous.  
We develop stability theory for it. 
\newpage

\centerline {Anotated content}
\bigskip

\noindent
\S1 
\mn
\begin{enumerate}
\item[${{}}$]  [Let $T$ be a complete first order theory, ${\gC}$ its
monster models $M,N$ are $\prec \gC$, $A,B$ are $\subseteq \gC$. Let
$D(T)$ be the set of complete $L(T)$-types realized in models of $T$ and let
$D$ be a subset of $D(T)$.  We call $A$ a $D$-set if every finite sequence
from $A$ realizes (in $\gC$) a type from $D$, $D(A)$ the minimal such $D$.
For a $D$-set let $\bfS^n_D(A)$ be the set of $p \in \bfS^n(A)$ 
such that if $\bar a = \langle a_\ell : \ell < n \rangle \in {}^n {\gC}$ 
realizes $p$ then $A \cup \bar a$ is a $D$-set.  
$M$ is called a $D$-model if $(M \prec {\gC}$ and) $D(M) \subseteq
D$; if $n=1$ we may omit.  We say $M$ is $(D,\lambda)$-homogeneous if (it is
a $D$-model and) $A \subseteq M$ and $|A| < \lambda$ and $p \in \bfS_D(A)
\Rightarrow p$ realized in $M$ (if $D = D(T)$ this means $M$ is
$\lambda$-saturated).  Let $D$ be fixed, all sets are $D$-sets, all models
are $D$-models.]
\end{enumerate}
\bigskip

\noindent
\S2
\mn
\begin{enumerate}
\item[${{}}$]  [We call $D$ $\lambda$-good if for every $\lambda$ there is
$(D,\lambda)$-homogeneous model of cardinality $\ge \lambda$.  
We call $D$ $\lambda$-stable if it is
$\lambda^+$-good and for every $D$-set $A$ of cardinality $\le \lambda$,
$|A| \le \lambda \Rightarrow |\bfS_D(A)| \le \lambda$.  We say $D$ is stable if
it is stable in some $\lambda$.  We say $p \in \bfS^n_D(A)$ split over
$B \subseteq A$ if $0$ for $\bar a,\bar b \subseteq A$ realizing the same
type over $B$ and $\varphi$, $\varphi(\bar x,\bar a) \equiv \varphi(\bar x,
\bar b) \in p$.  Basic properties of splitting are proved.  E.g. if
$|\bfS_D(A)| > \mu_0 = |A|^{< \lambda} + \sum\limits_{\mu < \lambda}
2^{|D|^\mu}$ then we an find $p \in \bfS_D(A)$ and an increasing sequence
$\langle A_i:i < \lambda \rangle$, $|A_i| < \aleph_0 + |i|^+$ such that
$p \rest A_{i+1}$ split over $A_i$, this in turn implies unstability
in every $\mu < 2^\lambda$.  Hence, if $D$ is unstable in every 
$\lambda < \beth \big( (2^{|T|})^+ \big)$ then $D$ is unstable (in fact have an indiscernible
sequence (which forms a $D$-set and) which ordered by some formula).]
\end{enumerate}
\bigskip

\noindent
\S3
\mn
\begin{enumerate}
\item[${{}}$]  [If $D$ is stable, then $D$ is good, and if $B \subseteq A$, 
$p \in \bfS^n_D(B)$ then $p$ can be extended to some $q \in \bfS^n_D(A)$.  
We also prove that if $D$ is stable in $\lambda$, 
$A \cup \bigcup \{\bar a_i : i < \lambda^+\}$ is a $D$-set, each $\bar a_i$ 
a finite sequence then for some $X \subseteq \lambda^+$, $|X| = \lambda^+$, 
we have $\langle \bar a_i:i \in X \rangle$ is an indiscernible set over $A$.]
\end{enumerate}
\bigskip

\noindent
\S4
\mn
\begin{enumerate}
\item[${{}}$]  [We say $p \in \bfS_D(A)$ splits strongly over 
$B \subseteq A$ if for some $C$, $\langle \bar a_i : i < \omega \rangle$ 
we have $A \cup \bigcup\limits_{i < \omega} \bar a_i \subseteq C$ 
(the point is that $C$ is a $D$-set) $\langle \bar a_i : i < \omega \rangle$ is 
an indiscernible sequence (equivalently set by \S3) over $A$ and for 
some $\psi$, $\psi(\bar x,\bar a_0),\neg \psi(\bar x,\bar a_i) \in p$.  
We investigate it and using it characterize the stability spectrum.  
I.e. if $D$ is stable, \then \, for some cardinal $\lambda = \lambda(D)$, 
$\kappa = \kappa(D)$ which are $< \beth \big(2^{|T|})^+\big)$ we 
have: $D$ is $\mu$-stable iff $\mu \ge \lambda(D)$ and 
$\mu = \mu^{< \kappa(D)}$.  Also, a formula cannot divide an indiscernible 
set to two large pieces (i.e. of size $\ge \kappa(D)$).]
\end{enumerate}
\bigskip

\noindent
\S5
\mn
\begin{enumerate}
\item[${{}}$]  [Some variants of $\lambda$-isolation are defined and so
$\lambda$-prime models over $(D)$-sets are proved to exist using what is
called in \cite[Ch.IV]{Sh:a} primary models are defined; in fact, the 
framework of it applies here (including uniqueness of primary).]
\end{enumerate}
\bigskip

\noindent
\S6  
\mn
\begin{enumerate}
\item[${{}}$]  [We prove that there is a $(D,\lambda)$-homogeneous non-$(D,\lambda^+$)-homogeneous models in all but ``degenerate" cases (parallel
to unidimensional $T$).]
\end{enumerate}
\bigskip

\noindent
\S7
\mn
\begin{enumerate}
\item[${{}}$]  [We prove the parallel of ``characterizing the categoricity
spectrum replacing categorical by all models are homogeneous, for class of
the form $\{M:M$ a model of $T$ omitting every type in a set $\Gamma\}$.]
\end{enumerate}
\mn
We prove: e.g. if $T$ is first order complete countable not categorical 
in $\aleph_2$ then it has $\ge |\alpha +1|$ non-isomorphic models in 
$\aleph_0$.

See \cite{Sh:54} on the spectrum of $\{\lambda$ : there is a 
$(D,\lambda)$-homogeneous model of cardinality $\lambda\}$.
\newpage

(B4) \underline{On Theories $T$ categorical in $|T|$}, JSL 35(1970),73-82.

We prove that, if $T$ is categorical in $|T|$, $|T| = |T|^{\aleph_0}$, then $T$
has a model of power $< |T|$; moreover, $T$ is a definitional extension of
a theory of smaller power.  We first note that $T$ has a $\aleph_1$-compact
model, and if $T$ is unsuperstable also a non $\aleph_1$-compact model of
power $\aleph_1$.  So $T$ is superstable.  We construct a model which is
``almost prime" over a countable indiscernible set, and prove it is maximal
in it, but we can easily build a model without such set.

Superseded by \cite[Ch.IX]{Sh:a}.
\bigskip

\noindent
(B5) \underline{On languages\footnote{Based on part of the author's
M.Sc.Thesis and the summary here says somewhat more on it.}
with non-homogeneous strings of quantifiers}, IJM 8(1970),75-79.

In infinitary logic we can introduce a string of quantifiers of the form
$$(\ldots Q_ix_i \ldots Q_jx_j \ldots)_{i \in I} \varphi(\bar x)$$ 
$I$ a linear order, $Q_i$ is $\exists$ or $\forall$.  For $I$ well ordered the 
interpretation is by a game, generally by Skolem functions.  The theorem is
that we can express such quantifiers by a well ordered sequence of
quantifiers (not so short; and propositial connectives).  In fact, only 
the case when $I$ an inverse of a well ordered set is presented in 
\cite{Sh:5};  the full proof appears in the author M. Sc. thesis.

(In somewhat revised formulation this was sent to a student of Takeuli 
and included in his thesis, Urbana 1973). 
\bigskip

\noindent
More formally
\begin{definition}
\label{a0}  
(Mostowski).  The meaning of 

\[
M \models (\ldots,(Q_t\bar x_t),\ldots)_{t \in I} \varphi(\ldots,
\bar x_t,\ldots)
\]

\mn
where $I$ is a linear order, 
$\bar x_t = \langle x_{t,\alpha} : \alpha < \alpha_t \rangle$, 
$Q_t \in \{\forall,\exists\}$, letting 
$$I_\exists = \{t : Q_t = \exists\},\ I_\forall = \{t \in I: Q_t = \forall\}$$ 
is that there are functions 
$F_{t,\alpha} = F_{t,\alpha}(\ldots,x_{s,\beta},\ldots)_{s \in I_\forall,\ s < t,\ \beta < 
\alpha_s}$ for $t \in I_\exists$, $\alpha < \alpha_t$ such that for every 
$a_{s,\beta} \in M$ (for $s \in I_\forall$, $\beta < \alpha_s$); letting 
$a_{t,\alpha} = F_{t,\alpha}(\ldots,a_{s,\beta},\ldots)_{s \in I_\forall,\ s < t,\ \beta < \alpha_s}$ we have $M \models \varphi[\ldots,\langle a_{t,\gamma}:
\gamma < \alpha \rangle,\ldots]_{t \in I}$.
\end{definition}

\begin{theorem}
\label{a2}
1) The formula $(\ldots,(Q_t\bar x_t)\ldots)_{t \in I}
\varphi(\ldots,\bar x_t,\ldots)$ is equivalent to the formula $\psi^*$
defined below.  

\noindent
Let
\mn
\begin{enumerate}
    \item[(A)]  ${\cT} = \{\bar J:\bar J = \langle J_i:i \le j \rangle$ is a strictly $\subseteq$-increasing continuous sequence of proper initial segments of $I$, 
    $J_0 = \varnothing\}$
\sn
    \item[(B)]  For $\bar J = \langle J_i:i \le j \rangle$ let $t \in \bar J$ mean $t \in J_j \setminus \bigcup\limits_{i < j} J_i$ and let $\bar J_1 \le \bar J_2$ mean $\bar J_1$ is an initial segment of $\bar J_2$
\sn
    \item[(C)]  For $\bar J = \langle J_i:i \le j \rangle \in \cT$ let
\[
\bar y_{\bar J} = \langle \ldots,x_{s,\beta},\ldots \rangle_{s \in \bar J,
\beta < \alpha_s,Q_s = \forall}
\]

\[
\bar z_{\bar J} = \langle \ldots,x_{s,\beta},\ldots \rangle_{s \in \bar J,
\beta < \alpha_s,Q_s = \exists}.
\]

\sn
    \item[(D)]  Lastly, define the formula $\psi^* = (\ldots,(\forall \bar y_{\bar J}) (\exists \bar z_{\bar J}),\ldots)_{\bar J \in \cT} \langle \varphi(\ldots,\bar y_{\bar J_1}, \bar z_{\bar J_1}, \ldots)_{\bar J_1 \le \bar J_2}:\bar J \in \cT \rangle$ which means: as explained below
\sn
    \item[(E)]  in the following game between the player $\exists$ and $\forall$, the player $\exists$ has a winning strategy;
\sn
    \begin{enumerate}
        \item[(a)]  in stage $i$ of a play an initial segment $J_i$ of $I$, strictly increasing with $i$ is constructed and a sequence $\langle \ldots,a^i_{s,\beta}, \ldots \rangle_{s \in I,\beta < \alpha_s}$ are constructed such that
\sn
        \begin{enumerate}
            \item[$(*)$]   $s \in J_{i_0},i_0 < i_1 \Rightarrow a^{i_0}_{s,\beta} = a^{i_1}_{s,\beta}$
        \end{enumerate}
\sn
        \item[(b)]  in stage $i$, the player $\forall$ chooses $a^i_{s,\beta}(s \in I, \beta < \alpha_s,Q_s = \forall)$ but satisfying $(*)$, \then \, player $\exists$ chooses $a^i_{s,\beta}(s \in I,\beta < \alpha_s,Q_s = \exists)$ but satisfying $(*)$, and then player $\forall$ chooses $J_j$ as required.  If in stage $\beta$ 
        $M \models \neg \varphi(\ldots,a^i_{s,\beta},\ldots)$, player $\exists$ loses immediately.  If $\exists$ never loses, he wins. 
    \end{enumerate}
\end{enumerate}
\mn
2) Formulas as in (1) (with tree quantifiers indexed by ${\cT}$ a
well founded tree) can be translated to one of the form 
$(\forall \bar y^0)(\exists \bar z^0) \ldots 
(\forall \bar y^\alpha)(\exists \bar z^\alpha)\ldots)_{\alpha < \gamma}
\bigwedge\limits_{\Upsilon \in \cT} \varphi_\Upsilon(\bar x^\Upsilon)$ where
$\gamma$ is the number of levels of the tree (so in our case, every
increasing sequence of initial segments has length $< \gamma$) and
$\varphi_\Upsilon$ the formula ``sitting" in the node $\Upsilon$ (we need
$\bar x^\Upsilon$ not to repeat variables appearing in incomparable nodes of
the tree, in our case all are the same $\varphi$).
In fact $\gamma^* \ge \otp(X)$ for every branch of the tree
${\cT}$, then $\gamma^*$ can serve and if $\alpha < \gamma^*,
\ell g(\bar y^\alpha) = \Sigma\{|\ell g(\bar y_t)|:t \in {\cT} \text{ of
level } \alpha\},\ell g(\bar z^\alpha) = \Sigma|\ell g(\bar z_t)|:t
\in {\cT} \text{ of level } \alpha\}$ are O.K.
\end{theorem}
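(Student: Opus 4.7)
The plan is to prove part (1) by checking both implications of the equivalence between Mostowski's Skolem-function semantics and the game semantics $\psi^*$, and to deduce part (2) by re-organizing the tree quantifiers according to levels of the well-founded tree $\cT$. For the direction from Mostowski's interpretation to $\psi^*$, I would translate a witnessing family of Skolem functions $(F_{t,\alpha})$ into a winning strategy for $\exists$: at stage $i$, once $\forall$ has committed universal values $(a^i_{s,\beta})$ satisfying $(*)$, let the strategy declare
\[
a^i_{t,\alpha} := F_{t,\alpha}\bigl((a^i_{s,\beta})_{s\in I_\forall,\, s<t,\, \beta<\alpha_s}\bigr).
\]
Because each $J_i$ is an initial segment of $I$, for every $t\in J_i$ all positions $s<t$ also lie in $J_i$ and are therefore frozen; hence the argument list of $F_{t,\alpha}$ does not change across later stages, yielding $(*)$ for $\exists$'s moves. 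The winning condition $M\models\varphi(\ldots,a^i_{s,\beta},\ldots)$ at each stage follows immediately from Mostowski's definition of the Skolem semantics.

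The converse direction is the main obstacle. Given a winning strategy $\sigma$ for $\exists$ in the game, I must extract, for each $t\in I_\exists$ and $\alpha<\alpha_t$, a function $F_{t,\alpha}$ of $(a_{s,\beta})_{s<t,\,Q_s=\forall}$ alone. My plan is: on input $(a_{s,\beta})_{s<t}$, simulate a play of the game in which $\forall$ uses the given values at positions $<t$ together with a fixed canonical default at positions $\ge t$, and chooses his increasing chain of proper initial segments so that $\{s:s<t\}$ is frozen strictly before $t$; set $F_{t,\alpha}$ to be $\exists$'s frozen value at $(t,\alpha)$ at the stage $t$ is absorbed. The delicate step is proving independence from the defaults: I would compare two parallel runs whose universal values agree at positions $<t$ but differ at positions $\ge t$, and exploit the consistency requirement $(*)$ together with the fact that $\sigma$ wins against every $\forall$-strategy, to force $\exists$'s responses at $(t,\alpha)$ to coincide at the freezing stage. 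Once the $F_{t,\alpha}$ are in hand, the global satisfaction of $\varphi$ follows by running the canonical play in which $\forall$ plays the full intended assignment $\bar a$ throughout and covers $I$ by a chain of proper initial segments; $\varphi$ holds at every stage, and the resulting frozen existential tuple is exactly $(F_{t,\alpha}(\bar a\rest\{s<t\}))$.

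For part (2) I would argue by transfinite recursion on the rank of $\cT$. Since $\cT$ is well-founded, each node $\Upsilon$ has a well-defined level $\alpha_\Upsilon$, an ordinal bounded by the height $\gamma^*$; concatenating the universal blocks $\bar y_\Upsilon$ (respectively existential blocks $\bar z_\Upsilon$) across all $\Upsilon$ of a given level $\alpha$ produces single tuples $\bar y^\alpha$ and $\bar z^\alpha$, with variable names kept disjoint across incomparable nodes so that no clash arises. The resulting linearly ordered prefix $(\forall \bar y^0)(\exists \bar z^0)\cdots(\forall \bar y^\alpha)(\exists \bar z^\alpha)\cdots$ has Skolem semantics which matches the tree game on $\cT$ level by level, because a Skolem function at level $\alpha$ sees precisely the universal variables of lower levels, which coincides with the ancestry constraint on $\cT$; the body becomes $\bigwedge_\Upsilon\varphi_\Upsilon$, and the bounds on $\gamma^*$ and on $\ell g(\bar y^\alpha),\ell g(\bar z^\alpha)$ are immediate from the construction.
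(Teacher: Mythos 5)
The direction ``winning strategy for $\exists$ implies existence of Skolem functions'' is where your argument has a genuine gap. You define $F_{t,\alpha}$ from a play in which $\forall$ uses the given values at positions $<t$ and fixed defaults at positions $\ge t$, but you then verify $M\models\varphi$ by running a \emph{different} play, the one in which $\forall$ plays the full assignment $\bar a$ from the start, and you claim the frozen existential tuple there ``is exactly $(F_{t,\alpha}(\bar a\rest\{s<t\}))$''. That identity is precisely what needs proof, and it fails for a general winning strategy: nothing in the rules prevents $\sigma$ from letting its about-to-be-frozen answer at $(t,\alpha)$ depend on $\forall$'s current, not-yet-frozen values at positions $\ge t$, so two plays agreeing below $t$ but differing above it may yield different frozen values at $(t,\alpha)$. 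Your proposed ``compare two parallel runs and exploit $(*)$'' has no leverage here, since $(*)$ constrains only positions already absorbed into some $J_i$. The repair is to use one and the same play for both the definition and the verification: fix the chain of principal initial segments $J_t=\{s:s\le t\}$, let $\forall$ reveal $\bar a$ gradually (its true values on $J_t$ at stage $t$, the fixed defaults elsewhere), define $F_{t,\alpha}$ as $\sigma$'s frozen answer in \emph{that} play --- which depends only on $\bar a\rest\{s<t\}$ by construction, because the entire history up to stage $t$ does --- and read off $M\models\varphi[\bar a,F(\bar a)]$ at the limit stage of that same play, where every position is frozen. (Limit stages and the case where $I$ has a maximum need a word, but that is bookkeeping.)

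In part (2) you assert that after linearizing by levels a Skolem function for $\bar z^\alpha$ ``sees precisely the universal variables of lower levels, which coincides with the ancestry constraint on $\cT$''. It does not coincide: the linearized function at a node $\Upsilon$ of level $\alpha$ may depend on universal variables sitting at nodes incomparable with $\Upsilon$, which the tree semantics forbids. This is the single point the paper's (otherwise one-line) proof flags, and it is resolved not by the dependence sets being equal but by the shape of the matrix $\bigwedge_{\Upsilon\in\cT}\varphi_\Upsilon(\bar x^\Upsilon)$: each conjunct mentions only the variables along one branch, so one may freeze the extraneous universal variables at arbitrary constants, and the resulting specialized Skolem functions, now depending only on ancestor variables, still satisfy every $\varphi_\Upsilon$. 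You should make this reduction explicit rather than claim the two dependence patterns agree.
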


\begin{PROOF}{\ref{a2}}
Straightforward, we may wonder: in the new formulas the ``$(\exists
\bar z^\alpha)$ may depend on additional variables but because fo the
structure of $\bigwedge\limits_{\Upsilon \in \cT} \varphi_\beta(\bar
x^\Upsilon)$ this is not a problem.
\end{PROOF}

\noindent
Another result from the M.Sc. thesis is:
\begin{theorem} 
\label{a5}
[Hanf number of $\bbL_{\lambda^+,\lambda^+}$ with unary
function only.]

Let the vocabulary $\tau$ be $\{F\} \cup \{P_i:i < \lambda\}$ where $F$ is a
unary function and the $P_i$ are unary predicates.
\mn
\begin{enumerate}
    \item[(A)]   if $\psi \in \bbL_{\lambda^+,\lambda^+}$ has models in arbitrarily large cardinality $< \beth_{\lambda^+}$ (in fact $> \beth_{\qd}(\psi)+1(\lambda)$ suffice), 
    \then \, it has models in arbitrarily large cardinalities (in fact, any one above $\beth_{\qd}(\psi)+1(\lambda)$)
\sn
    \item[(B)]  if $T \subseteq \bbL_{\lambda^+,\lambda^+}$ has a model of cardinality 
    $> \beth_{\lambda^+}$, \then \, it has models in arbitrarily large cardinalities.
\end{enumerate}
\end{theorem}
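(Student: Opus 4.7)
The plan is to prove both parts via an Ehrenfeucht--Mostowski style construction adapted to $\bbL_{\lambda^+,\lambda^+}$: extract a sequence of $\lambda^+$ indiscernibles from a sufficiently large model, and then ``stretch'' the underlying linear order to produce models of arbitrarily large cardinality. Given $\psi \in \bbL_{\lambda^+,\lambda^+}$ of quantifier depth $\alpha$ and a model $M \models \psi$ with $|M| \ge \beth_{\alpha+1}(\lambda)$, I would first Skolemize by adding a function of arity $<\lambda^+$ for each existential subformula of $\psi$, obtaining an enriched $M^*$ in a vocabulary of size $\le 2^\lambda$. This makes a Skolem-hull operator available, so that a model will be determined by its ``skeleton'' of indiscernibles.

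The central and most delicate step is extracting an $\bbL_{\lambda^+,\lambda^+}$-indiscernible sequence $\langle a_i : i < \lambda^+ \rangle$ in $M^*$ (for formulas of quantifier depth $\le \alpha$) using an iterated partition theorem. Here the restricted vocabulary $\{F\}\cup\{P_i:i<\lambda\}$ is used crucially: the atomic type of an $n$-tuple $\bar a$ is determined by the atomic types of single elements together with \emph{pairs}, since any $F$-orbit identification $F^k(a_i)=F^l(a_j)$ involves only two entries at a time. I would define approximate types of depth $\le\beta$ inductively on $\beta\le\alpha$ and show that each inductive step is handled by a single application of an Erd\H{o}s--Rado-style inequality on colorings of pairs with at most $\lambda$ colors, each step consuming one $\beth$-level of the tower. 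After $\alpha+1$ iterations the bound $\beth_{\qd(\psi)+1}(\lambda)$ delivers $\lambda^+$ indiscernibles, giving (A); for (B), because formulas in $T$ can have depth up to $\lambda$, the iteration must run through a cofinal sequence in $\lambda^+$, consuming the full bound $\beth_{\lambda^+}$ of the hypothesis.

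From the indiscernibles I would read off their EM-diagram --- the common $\bbL_{\lambda^+,\lambda^+}$-type (up to the relevant quantifier depth) of each increasing $<\lambda^+$-tuple --- and, for any target $\kappa$, realize this diagram on a linearly ordered skeleton of size $\kappa$ by constructing the Skolem-hull term model directly. Since Skolem functions are $<\lambda^+$-ary and there are $\le 2^\lambda$ of them, the hull has cardinality between $\kappa$ and $\kappa^\lambda$, which is arbitrarily large by choice of $\kappa$. Preservation of the diagram up to quantifier depth $\qd(\psi)$ (respectively, for each $\varphi \in T$) forces the new model to satisfy $\psi$ (respectively, $T$). The main obstacle I anticipate is the precise accounting in the partition step --- verifying that the inductive passage through quantifier depth costs only one $\beth$-level per stage and width $\lambda$ per level --- which is exactly where the unary-only vocabulary pays off: without it, partitions would be needed on arbitrarily long tuples with $2^\lambda$ colors, and the bound would inflate to $\beth_{(2^\lambda)^+}$, the usual Hanf number for $\bbL_{\lambda^+,\lambda^+}$ over arbitrary vocabulary.
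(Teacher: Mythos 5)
There is a genuine gap, and it sits exactly at the load-bearing step of your plan: the ``stretching'' of an indiscernible skeleton does not preserve satisfaction of $\bbL_{\lambda^+,\lambda^+}$-sentences. Your indiscernibles are indexed by the ordinal $\lambda^+$, so every increasing tuple from the skeleton is well-ordered; when you replace the skeleton by an arbitrary linear order of size $\kappa$, the new model contains $\le\lambda$-tuples of skeleton elements whose order types (e.g.\ descending $\omega$-sequences) simply do not occur in the original skeleton, and the EM-diagram you read off says nothing about them. Since a single $\bbL_{\lambda^+,\lambda^+}$-formula quantifies over $\lambda$-tuples, it can detect these new configurations; this is precisely why the Hanf number of $\bbL_{\lambda^+,\lambda^+}$ over an \emph{arbitrary} vocabulary is not $\beth_{(2^\lambda)^+}$ (that is the Hanf number for $\bbL_{\lambda^+,\omega}$/omitting types) but is vastly larger, since with a binary relation one can express well-foundedness and pin down huge cardinals. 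Your closing remark therefore misidentifies where the unary-only hypothesis is doing its work: it is not merely reducing the arity of the partition colouring, it is what makes the whole preservation problem tractable, and your argument never actually uses it for that purpose.

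The paper's own proof avoids indiscernibles entirely. A model over $\{F\}\cup\{P_i:i<\lambda\}$ decomposes into connected components $M^{\LL x\RR}$ of the graph of $F$, and each component is built from the predecessor sets $\mathrm{Pre}_M(x)$. If $\|M\|>\beth_{\qd(\psi)+1}(\lambda)$ then by pigeonhole either there are more than $\beth_{\qd(\psi)+1}$ components, in which case for some $x$ and every quantifier depth $\beta<\qd(\psi)$ more than $\lambda$ components agree with $M^{\LL x\RR}$ on all $\bbL_{\infty,\lambda^+}$-formulas of depth $\le\beta$, and one may adjoin arbitrarily many isomorphic copies of $M^{\LL x\RR}$ without changing the depth-$\qd(\psi)$ theory (a back-and-forth/Feferman--Vaught argument for disjoint unions); or some $\mathrm{Pre}_M(x)$ is large and one duplicates predecessor trees instead. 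If you want to salvage an indiscernibility-flavoured argument you would have to prove a preservation lemma for arbitrary reindexings of the skeleton, which in effect forces you back to the component analysis; as written, the step ``preservation of the diagram up to quantifier depth $\qd(\psi)$ forces the new model to satisfy $\psi$'' is asserted but is false for this logic without the structural input.
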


\begin{PROOF}{\ref{a5}}  For an $L$-model $M$ and $x \in M$ let 
$$M^{[x]} = (M \rest \{y:\text{for some } n < \omega,F^n(y) = x\},x)$$ 
(so $F^{M[x]}$ may be partial and $x$ serve as an individual constant) where
$F^0(x) = x$, $F^{n+1}(x) = F(F^n(x))$ and let 
$M^{\LL x \RR} = M \rest \{y : y \in \bigcup\limits_{n < \omega} M^{F^n(x)}\}$.  
Let $\mathrm{Pre}_M(x) = \{y:F^M(y) = x\}$.

\noindent
Why (A)?  Assume for simplicity that q.d.$(\psi)$ is a limit ordinal.
If $M$ is a model of $T$ of cardinality $> \beth_{\text{q.d.}(\psi)}
(\lambda)$ then one of the following occurs:
\sn
\begin{enumerate}
\item[$(\alpha)$]  the number of components (i.e. $M^{\LL x\RR},x \in M$) is
$> \beth_{\text{q.d.}(\psi)+1}$.  So for some $x$, for each 
$\beta < \text{ q.d.}(\psi)$ the set $\{M^{\LL y \RR}:M^{\LL y \RR},M^{\LL x\RR}$ satisfies
the same $L_{\infty,\lambda^+}$-formulas of quantifier depth 
$\le \beta\}$ has cardinality $> \lambda$. 
So we can add to $M$ any number of isomorphic copies of $M^{\LL x\RR}$
\sn
\item[$(\beta)$]  for some $x \in M,\beth_{\text{q.d.}(\psi)+1} <
|\text{Pre}_M(x)|$ so for some $y \in \text{ Pre}_M(x)$, for every
$\beta < \text{ q.d.}(\psi)$, we have $\{z \in \text{ Pre}_M(x):M^{[z]},
M^{[x]}$ satisfies the same $L_{\infty,\lambda^+}$ formulas of 
quantifier depth $\le \text{ q.d.}(\psi)\}$ has cardinality 
$> \beth_{\text{q.d.}(\psi)}$.  
So we can just ``increase" $\text{Pre}_M(x)$, giving more copies of
$M^{\LL  x\RR}$.
\end{enumerate}
\mn
Why (B)?  Similarly.
\end{PROOF}
\bigskip

\noindent
(B6) \quad \underline{A note on Hanf numbers}, PJM, 34(1970),539-543.

We show that for every $\xi < (2^\kappa)^+$, there is a theory $T$ and set
of type $\Gamma$ in a language of power $\kappa$, such that there is a model
of $T$ which omits every $p \in \Gamma$ of power $\lambda$ if and only if
$\lambda \le \beth_\xi$.  This completes the computation of an appropriate
Hanf number.  We also disprove a conjecture of Morley on the existence of
algebraic elements in such examples.
\bigskip

\noindent
(B7) \quad \underline{On the cardinality of ultraproduct of finite sets}, 
JSL, 35(1970), 83-84.

We prove that if $D$ is an ultrafilter and $\aleph_0 \le \lambda = 
\Pi n_i/D$, then $\lambda^{\aleph_0} = \lambda$.  The method is to 
use the amount of number theory $N = \Pi n_i/D$ inherit and 
$\aleph_1$-saturation of $N$.
\bigskip

\noindent
(B8) \quad \underline{Two cardinal compactness}, IJM 9(1971), 103-198.

Let $K$ be the class of $(\lambda,\mu)$-models, i.e. models such that
$\|M\| = \lambda$, $|P^M| = \mu$.  We prove that when $\mu^{\aleph_0} = \mu$,
$K$ is $\mu$-compact (i.e. a theory $T$ of cardinality $\le \mu$ has a
$(\lambda,\mu)$-model iff every finite subtheory has a 
$(\lambda,\mu)$-model).

In fact $\aleph_0$-compactness implies $\mu$-compactness, see the abstract
in the Notices of the AMS, \cite{Sh:E17}.  We also prove that then any 
$T$, $|T| \le \mu$ which has a $(\lambda,\mu)$-model, has a
$(\lambda,\mu)$-model in which only $\le 2^{\aleph_0} + |T|$ types are
realized (e.g. if $|T| = \aleph_0$, and the class of 
$(\lambda,\mu,\aleph_0)$-models is compact, we can get $\aleph_0$ types).
Also we can get models with many automorphisms (by a sequence of $\mu$
indiscernibles).

Similar theorems hold for $\mu$-like models, hence cardinality quantifiers,
and even for several such demands.

In the cases that $\aleph_0$-compactness holds, transfer theorems are reduced
to partition theorems with finite conclusions.

The models are like incomplete Ehrenfeucht-Mostowski ones, but the
indiscernibility is not ``full", just as much as corresponding to the
appropriate partition theorems.
\bigskip

\noindent
(B9) \quad \underline{Remark to ``local definability theory of
  Reyes"}, AML 2(1971), 441-448.

Let $\tau \subseteq \tau_1$, $P \in \tau_1 \setminus \tau$ and $T$ a theory in $\tau_1$.  For a model $M$ let 
$$\df(M) =: \big|\{P:(M,P) \text{ is a reduct of a model of } T\} \big|$$ and let $\Df(\lambda) := \sup\{\df(M)^+ : \|M\| = \lambda\}$.  

Then the following are equivalent:
\mn
\begin{enumerate}
    \item[(i)]  for no formula $\theta(\bar x,\bar y) \in \tau$, 
    $T \vdash (\exists \bar y)(\forall \bar x)\big[P(\bar x) \equiv \theta (\bar x, \bar y)\big]$ 
\sn
\item[(ii)] for some $\lambda \ge |\tau_1|$, $\Df(\lambda) > \lambda^+$,
\sn
\item[(iii)]  for every $\lambda \ge |\tau_1|$, $\Df(\lambda) \ge
\Ded^*(\lambda)$ (which is the first $\mu$ such that any tree of power 
$\lambda$, has $< \mu$ branches of a fixed height).
\end{enumerate}

\begin{conjecture}
\label{a8}
If for some $\lambda \ge |\tau_1|$, $\Df(\lambda) > \Ded(\lambda)$ then 
for every $\lambda$, $\Df(\lambda) = (2^\lambda)^+$.
\end{conjecture}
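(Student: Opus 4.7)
The plan is to prove a dichotomy theorem of the same flavor as Shelah's classical counts of types in stability theory: either the family of admissible interpretations of $P$ is controlled by a tree/order structure (and hence bounded by $\Ded(\lambda)$), or it has enough combinatorial freedom to encode arbitrary subsets (yielding the trivial upper bound $2^\lambda$). Concretely, assume $\Df(\lambda_0) > \Ded(\lambda_0)$ for some $\lambda_0 \ge |\tau_1|$; my goal is to show that for every $\mu$, $\Df(\mu) \ge 2^\mu$ (so $\Df(\mu) = (2^\mu)^+$ since $2^\mu$ is a trivial upper bound on the number of distinct interpretations of $P$).

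First, I would unpack the hypothesis: there is a model $M_0$ of cardinality $\lambda_0$ and a family $\{P_\alpha : \alpha < \Ded(\lambda_0)^+\}$ of pairwise distinct subsets of $M_0^{\arity(P)}$ such that each $(M_0, P_\alpha)$ is a reduct of a model of $T$. Following the style of the proof of equivalence (i)$\Leftrightarrow$(ii)$\Leftrightarrow$(iii) in B9, I would attach to each $P_\alpha$ a $\tau$-formula-with-parameters that ``approximates'' it, organizing these approximations into a tree of $\tau$-formulas with branching coming from the choice of $P_\alpha$. The hypothesis that there are more $P_\alpha$ than $\Ded(\lambda_0)$ says that this tree cannot be realized as a family of branches of a linearly ordered structure of cardinality $\lambda_0$; by a Ramsey/Erdős-Rado argument of the sort used in B3 \S4 to extract indiscernibles (and pushing up via compactness to a large monster $\gC$ of $T$), this should yield a formula $\varphi(\bar x, \bar y)$ together with an indiscernible sequence $\langle \bar a_\eta : \eta \in {}^{<\omega}2\rangle$ for which the sets $\{\varphi(\bar x, \bar a_{\eta \rest n}) : n < \omega\}$ are consistent precisely along any prescribed branch — i.e., $\varphi$ witnesses the independence property relative to the admissible interpretations of $P$.

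Given this independence witness, the second half is a Skolem/EM-style construction. Working inside a Skolemization of $T$, I would, for a given $\mu$, take a sequence of $\mu$ indiscernibles realizing the independent pattern, build the EM-model $M$ over it of cardinality $\mu$ in the vocabulary $\tau$, and use compactness (as in B8) to show that for each $S \subseteq \mu$ there is a model of $T$ whose $\tau$-reduct is $M$ and whose $P$ records $S$ through the $\varphi$-independence. Distinct $S$ produce distinct $P$, giving $\df(M) \ge 2^\mu$.

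The principal obstacle is the first half: showing that violating $\Ded(\lambda)$ (rather than $\Ded^*(\lambda)$) is strong enough to force a genuine independent pattern, as opposed to merely a tree-like splitting pattern that only yields $\Ded^*$-many branches. Controlling the gap between $\Ded$ and $\Ded^*$ is delicate and is, I expect, the reason this remained a conjecture rather than a theorem; bridging it likely needs a finer analysis of which tree-configurations of partial definitions can actually be amalgamated into full reducts, possibly with additional cardinal arithmetic hypotheses to suppress pathological behavior of $\Ded$ versus $\Ded^*$.
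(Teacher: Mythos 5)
The statement you are proving is left in the paper as an open conjecture: item (B9) proves only the equivalence of (i) non-definability of $P$, (ii) $\Df(\lambda) > \lambda^+$ for some $\lambda \ge |\tau_1|$, and (iii) $\Df(\lambda) \ge \Ded^*(\lambda)$ for every such $\lambda$, and then records the present statement as Conjecture~\ref{a8} with no argument. So there is no proof in the paper to compare against, and the only question is whether your sketch closes the gap. It does not, and the gap is the one you yourself name in your last paragraph: the entire content of the conjecture is the passage from ``$\Df(\lambda_0)$ exceeds $\Ded(\lambda_0)$'' to an actual independence configuration. The hypothesis puts you strictly above $\Ded(\lambda_0)$ but possibly well below $2^{\lambda_0}$, and a tree of approximating $\tau$-formulas with more than $\Ded(\lambda_0)$ realizable branches need not contain a binary independent subtree: consistently $\Ded(\lambda) < 2^\lambda$, and the Erd\H{o}s--Rado/indiscernible extractions you invoke from B3~\S4 produce \emph{order} or \emph{splitting} patterns from cardinality hypotheses of this kind, not the independence property. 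This is exactly the phenomenon in the stability-theoretic type-counting dichotomy, where the intermediate regime between $\Ded$-many and $2^\lambda$-many types is governed by order without independence; ruling that intermediate regime out for the definability spectrum $\Df$ is the open problem, not a step you can delegate to a ``Ramsey argument of the sort used in B3.''

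Two further remarks. First, your reduction of the conclusion to ``$\df(M) \ge 2^\mu$ for some $M$ of cardinality $\mu$'' is correct, since $\Df(\mu) = \sup\{\df(M)^+ : \|M\| = \mu\}$ and $2^\mu$ is the trivial upper bound; and the second half of your plan (Skolemize, build an EM model over an independent pattern of length $\mu$, and use compactness as in B8 to realize each $S \subseteq \mu$ by some expansion) is the standard and correct way to finish \emph{once} an independence witness over the reduct is in hand — though you should make sure the $2^\mu$ expansions give $2^\mu$ distinct predicates on the same $\tau$-reduct, which requires the independent parameters to lie inside $M$ itself. Second, your framing as a dichotomy ``tree-controlled versus independent'' quietly replaces the conjecture's hypothesis by the stronger one $\Df(\lambda_0) \ge \Ded^*(\lambda_0)$ (or even unboundedness below $2^{\lambda_0}$); under that stronger hypothesis the sketch is plausible, but it is not the statement being proved. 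As written, the proposal is a reasonable research plan rather than a proof, and its unresolved step coincides with the reason the statement is a conjecture.
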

\bigskip

\noindent
(B10) \quad \underline{Stability, the f.c.p. and superstability}, 
AML 3(1971), 271-362.

We investigate in detail stable formulas, ranks of types and their
definability, the f.c.p., some syntactical properties of unstable formulas,
indiscernible sets and degrees of types in superstable theories.  There is
a list of all results connected with those properties, or whose proof use
them.

This list may still be of value.  Superseded by \cite[Ch.II]{Sh:a}.
\bigskip

\noindent
(B11) \quad \underline{On the number of non-almost 
isomorphic models of $T$ in a power},\\ PJM, 36(1971), 811-818.

Let $T$ be a first order theory.  Two models are almost isomorphic if they
are elementarily equivalent in the language $\bbL_{\infty,\omega}$.  We
investigate the number of non almost-isomorphic models of $T$ of power
$\lambda$ as a function of $\lambda$, $I(T,\lambda)$.  We prove $\mu > \lambda
\ge |T|$ and $I(T,\lambda) \le \lambda$ implies $I(T,\mu) \le I(T,\lambda)$.

We also get downward Lowenheim Skolem theorem for the corresponding variant
of a rigid model.
\bigskip

\noindent
(B12) \quad \underline{The number of non-isomorphic models of 
an unstable first-order theory},\\ IJM 4(1971), 473-487.

It is proved that if $T$ is an unstable (first-order) theory, $\lambda >
|T| + \aleph_0$, then $T$ has exactly $2^\lambda$ non-isomorphic models of
cardinality $\lambda$.  In fact we have stronger results: this is true for
pseudo-elementary classes, and for almost every $\lambda \ge |T| + \aleph_1$.

The method is contradictory orders.  Not too many Ehrenfeucht Mostowski
models built on pairwise contradictory order are isomorphic.  Many pairwise
contradictory orders are constructed, by using stationary sets.

In passing we deal with the existence of a family of $2^\lambda$ subsets of
$\lambda$, each of power $\lambda$, the intersection of any two finite, if it
does not exist, our proof works for $\lambda = |T| + \aleph_1$, if it does
exist, for some $\aleph_\alpha < \lambda$, 
$2^\lambda = 2^{|\alpha|} + 2^{\aleph_0}$ (in fact it follows that 
$\lambda < 2^{\aleph_0} \vee (\exists \mu,\alpha)$, 
$\mu < \alpha = \aleph_\alpha \le \lambda < \mu^{\aleph_0}$) 
and then we use a different proof.

Superceded by \cite[Ch.III,\S3]{Sh:300}, even better 
\cite[Ch.III,\S3]{Sh:e}.
\bigskip

\noindent
(B13) \quad \underline{Every two elementary equivalent models 
have isomorphic ultrapowers},\\ IJM, 10(1971), 224-233.

We prove that every two elementarily equivalent models have isomorphic
ultrapower.  In fact if $M,N$ are of cardinality $\le \lambda$, then there
is such an ultrafilter on $2^\lambda$.

Later the author notes that (see Stern, \cite{Str76}) the parallel theorem for
Banach spaces, and so get Craig interpolation theorem for the suitable
logic.
\bigskip

\noindent
(B14) \quad \underline{Saturation of Ultrapowers and Keisler's Order}

Superseded by \cite[Ch.VI]{Sh:a}, except \S1, which gives examples of
uncountable theories.
\bigskip

\noindent
(B15) \quad \underline{Uniqueness and characterization of prime models 
over sets for totally}\\ \underline{transcendental first-order theories}, JSL
(1972), 107-113.

If $T$ is a complete first-order totally transcendental theory then, over
every $T$-structure $A$, the prime model is unique up to isomorphism over $A$.
Moreover, $M$ is a prime model over $A$ iff:
\mn
\begin{enumerate}
\item[(a)]  every finite sequence for $M$ realizes an isolated type over
$A$, and 
\sn
\item[(b)]  there is no uncountable indiscernible set of $A$ in $M$.
Hence the uniqueness of the differential closure of a differential field of
characteristic zero follows.
\end{enumerate}
\mn
The proof is by a suitable induction on rank.
\bigskip

\noindent
(B16) \underline{A combinatorial problem: stability and order for
  models and theories in}\\ \underline{infinitary languages} PJM 41(1972).

Some infinite combinatorial problems are solved.  Their model-theoretic
representation is: if the model $M$ is unstable in $\lambda$, 
$\lambda = \lambda^{< \mu} + \sum\limits_{\kappa < \mu} 2^{2^\kappa}$, 
then in $M$ there is a set of sequences from $M$, of fixed finite length 
ordered by one formula.  We then prove that if $M$ is stable in $\lambda$ 
and has no ``large" ordered set, every set of $> \lambda$ elements contains an 
indiscernible subset.  We also prove that if $\psi \in
\bbL_{\lambda^+,\omega}$, $\varphi(\bar x,\bar y) \in \bbL_{\infty,\omega}$ 
and $\psi$ has a model in which $\psi$ orders a set of
power $\mu$ for arbitrarily large $\mu$ (or just $\mu < \beth_{(2^\kappa)^+}$
where $\psi,\varphi \in \bbL_{\kappa^+,\omega}$) \then \, $\psi$ has $2^\mu$ 
non-isomorphic models of power $\mu$ for each $\mu$.

Most model theory is superseded by \cite[Ch.I]{Sh:300} and Theorem 2.5
by \cite{Sh:222}.
\bigskip

\noindent
(B17) \quad \underline{For what filters is every reduced product
  saturated} IJM 12(1972), 23-31.

In this paper we characterize the filters $D$ such that for every sequence\\
$\langle M_i : i < I \rangle$ we have $\prod\limits_{i \in I} M_i/D$ is 
$\lambda$-saturated (where $\lambda > \aleph_0$).  
The characterization is: $D$ is $\lambda$-good, $D$ is
$\aleph_0$-incomplete and ${\cP}(I)/D$ is a $\lambda$-saturated Boolean
Algebra.  (Note that when the Boolean algebra ${\cP}(I)/D$ is 
$\lambda$-saturated it can be characterized directly; we can also 
restrict ourselves e.g. to $\Pi_n$-formulas only).
\bigskip

\noindent
(B18) \quad \underline{On Models with power-like orderings} JSL
37(1972), 247-267.

We prove here theorems of the form: if $T$ has a model $M$ in which
$P_1(M)$ is $\kappa_1$-like ordered, $P_2(M)$ is $\kappa_2$-like ordered...,
and $Q_1(M)$ is of power $\lambda_1,\dotsc,$ then $T$ has a model $N$ in
which $P_1(M)$ is $\kappa'_1$-like ordered..., $Q_1(N)$ is of power
$\lambda'_1,\ldots$  (In this article $\kappa$ is a strong limit singular
cardinal, and $\kappa'$ is a singular cardinal).  If the language has
power $\chi$, this is written $\chi:\langle \kappa_1,\kappa_2,\dotsc,|
\lambda_1,\ldots \rangle \rightarrow \langle \kappa'_1,\kappa'_2,\dotsc,|
\lambda'_1,\ldots \rangle$.

We also sometimes add the condition that $M,N$ omits some types.  The results
are seemingly the best possible, i.e. according to our knowledge about 
$n$-cardinal problems (or, more precisely, a certain variant of them,
speaking on cofinality, too).

Our method will enable us to reduce such problems to transfer problems which
do not mention $\kappa$-like orderings.  In many cases we can translate
problems of the form $\chi:\langle \kappa_1,\ldots|\lambda_1,\ldots \rangle
\rightarrow \langle \kappa'_1,\ldots|\lambda'_1,\ldots \rangle$ to problems
of the form 
$$\chi:\langle \cf(\kappa_1),\dotsc,\lambda_1,\ldots \rangle
\rightarrow \langle \cf(\kappa'_1),\dotsc,\lambda'_1,\ldots \rangle.$$

As an example of our results, concerning transfer between pairs of cardinals
we have: if $\chi \le \lambda' < \kappa',\chi \le \lambda < \kappa$,
then $\chi:\langle \kappa|\lambda \rangle \rightarrow \langle \kappa'|
\lambda' \rangle$ when at least one of the following conditions is
satisfied:
\mn
\begin{enumerate}
    \item[(A)]   $\cf(\kappa') \le \lambda'$,
\sn
    \item[(B)]  $\cf(\kappa) > \beth_\omega(\lambda)$,
\sn
    \item[(C)]  $\cf(\kappa) > \lambda$, $\cf(\kappa') = (\lambda')^+$, 
    $\lambda' = \sum\limits_{\mu < \lambda} (\lambda')^\mu$.
\end{enumerate}
\mn
In general, the transfers are affected by using incomplete 
Ehrenfeucht-Mostowski-like types obtained from certain polarized 
partition theorems.

\noindent
In addition we prove that:
\begin{theorem}
\label{a10}
There is a nice logic which is (fully) compact; stronger than first
order.  We got it adding the quantifier ``the cofinality of an order
is $\kappa$" to first order logic gives a compact logic (provably in ZFC of
course; subsequently more such logics were found).
\end{theorem}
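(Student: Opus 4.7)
The plan is to prove the full compactness of the logic $L(Q^{\cf}_\kappa)$ obtained by adjoining the quantifier $Q^{\cf}_\kappa xy\, \varphi(x,y,\bar z)$ --- asserting that $\varphi$ defines a linear order of cofinality $\kappa$ --- for a fixed regular $\kappa$. I will carry out the canonical case $\kappa=\omega$; other regular cardinals with uncountable cofinality are analogous. The overall strategy is to reduce to first-order compactness by replacing the cofinality quantifier with a fresh relation symbol plus Henkin witnesses, and then to assemble the final model as a continuous elementary chain of length $\omega_1$: since $\cf(\omega_1)=\omega_1\ne\omega$, the chain will automatically impose non-$\omega$ cofinality on any definable order that is unbounded along the chain, while positive instances of $Q^{\cf}_\omega$ can be handled by Henkin witnesses confined to a single countable stage.

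Let $T$ be a theory in $L(Q^{\cf}_\omega)$ each of whose finite subsets is satisfiable. For every subformula $Q^{\cf}_\omega xy\, \varphi(x,y,\bar z)$ occurring in $T$ introduce a new predicate $R_\varphi(\bar z)$ and Henkin constants $d^\varphi_{\bar c,n}$ indexed by finite tuples of new constants $\bar c$ and $n<\omega$. Let $T^\dagger$ be the first-order theory obtained from $T$ by replacing each $Q^{\cf}_\omega xy\,\varphi$ by $R_\varphi$ and adjoining the axioms ``$R_\varphi(\bar z)\to\varphi(\cdot,\cdot,\bar z)$ is a linear order'' and ``$R_\varphi(\bar c)\to$ the $d^\varphi_{\bar c,n}$ are $\varphi$-strictly increasing in $n$''. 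Every finite subset of $T^\dagger$ involves only finitely many $R_\varphi(\bar c)$-instances, so it reduces to a finite subset of $T$ which by hypothesis has a model, from which $\omega$ cofinal witnesses can be chosen for each instance; thus $T^\dagger$ is finitely satisfiable, and by ordinary first-order compactness has a model $N$.

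I then construct $M\models T$ as the union $M=\bigcup_{\alpha<\omega_1}M_\alpha$ of a continuous elementary chain of countable $M_\alpha\prec N$. Using bookkeeping that enumerates pairs (formula with parameters from $M_{<\alpha}$, prospective $\omega$-sequence from $M_{<\alpha}$), at each successor stage $\alpha+1$ I ensure: (a) for every $\psi(x,y,\bar a)$ with $\bar a\in M_\alpha$ and $R_\psi(\bar a)^N$, the witnesses $d^\psi_{\bar a,n}$ are included in $M_{\alpha+1}$; (b) for every $\psi(x,y,\bar a)$ with $\bar a\in M_\alpha$ and $\neg R_\psi(\bar a)^N$, and every prospective $\omega$-cofinal $\psi$-sequence inside $M_\alpha$, an element of $N$ which $\psi$-bounds the sequence (such a bound exists in $N$ because $\neg R_\psi(\bar a)$ precludes cofinality $\omega$ for the full $\psi$-order in $N$). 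After $\omega_1$ stages, negative instances in $M$ cannot acquire an $\omega$-cofinal sequence, since any such sequence lies inside some $M_\beta$ and has by (b) been capped above at stage $\beta+1$; while positive instances have their Henkin chains $\langle d^\psi_{\bar a,n}\rangle$ sitting inside $M$.

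The main obstacle is verifying that the Henkin constants $d^\psi_{\bar a,n}$ remain cofinal in the $\psi$-order of the full $M$, not merely of some stage $M_\alpha$. Equivalently, when $R_\psi(\bar a)$ holds, no element of $M$ should sit $\psi$-above every $d^\psi_{\bar a,n}$: one must omit, for each such $\bar a$, the type $p_\psi(\bar a)(x)=\{R_\psi(\bar a)\}\cup\{\varphi(d^\psi_{\bar a,n},x,\bar a):n<\omega\}$. This requires incorporating an omitting-types ingredient into the chain construction: each $p_\psi(\bar a)$ is non-principal over $T^\dagger$ (principality would contradict the finite satisfiability of $T$, via models of finite fragments in which the $\varphi$-cofinality really is $\omega$), so by a standard Henkin–omitting-types interleaving the successor-stage $M_{\alpha+1}$ can be chosen to omit all such types accumulated so far. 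The resulting $M$ interprets $Q^{\cf}_\omega$ correctly on every formula, whence $M\models T$ and the logic is fully compact; strict extension of first-order logic is immediate since $Q^{\cf}_\omega$ is not first-order definable in, say, the theory of dense linear orders without endpoints.
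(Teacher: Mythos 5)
The paper states this result only as part of an abstract (of [Sh:18]) and gives no proof here, so I can only judge your argument on its own terms. Your overall architecture --- replace $Q^{\cf}_\omega$ by fresh predicates $R_\varphi$ plus Henkin witnesses, apply first-order compactness, then assemble the model as a continuous elementary chain of length $\omega_1$ so that ``negative'' orders acquire uncountable cofinality while ``positive'' ones stay pinned to their witness sequences --- is the right shape of proof. But there is a local error and, more importantly, a central gap.

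The local error is in step (b): you justify the existence in $N$ of a cap for a prospective $\omega$-sequence by saying that $\neg R_\psi(\bar a)$ ``precludes cofinality $\omega$ for the full $\psi$-order in $N$''. It does not: $R_\psi$ is a fresh symbol and $T^\dagger$ contains only the implication from $R_\psi(\bar a)$ to the existence of increasing witnesses; no first-order axiom can force a $\neg R_\psi$-order to have uncountable cofinality in $N$, since that property is not elementary. (This is repairable --- take $N$ to be $\aleph_1$-saturated, so every definable order without last element has cofinality $\ge\aleph_1$ in $N$ --- but the same saturation then guarantees that $N$ contains elements lying above every Henkin sequence $\langle d^\psi_{\bar a,n}\rangle_n$, which sharpens the second problem.) The central gap is exactly there: at each successor stage you must simultaneously (i) adjoin, for every negative instance, an element above the whole of its order so far, and (ii) adjoin no element above any positive instance's witness sequence. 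You dispose of (ii) by invoking ``a standard Henkin--omitting-types interleaving'' for the types $p_{\psi(\bar a)}$, but this is not a standard application of the omitting types theorem: there are up to $\aleph_1$ such types, with parameters scattered over the whole uncountable model, over a possibly uncountable language, and they must be omitted \emph{while} the bounding types of (i) are being realized in the same extension. Nothing you have said rules out, for instance, a definable map carrying the new cap of a negative-instance order onto an upper bound of a positive-instance witness sequence; excluding such interactions requires using the finite satisfiability of the original $L(Q^{\cf}_\omega)$-theory, not merely the non-principality of each $p_{\psi(\bar a)}$ over $T^\dagger$. Constructing the successor stage $M_{\alpha+1}$ with both properties is the actual content of the theorem, and your write-up does not supply it.
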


We also suggest transfer theorems starting with finite cardinals, some
problems on them and the easily answered cases.  (Those corresponding to the
gaps $(\aleph_0),(\beth_\omega,\aleph_0)$), see p.250 and see more in
\cite{Sh:37}.

In proving the theorems or omitting types we use passing to non-well ordered
models of fragments of set theory (\S5).
\bigskip

\noindent
(B19) \quad \underline{Separability properties of almost
  disjoint families of sets}, IJM 12(1972), 207-214 (with P. Erd\H os).

We deal with familiies of subsets of $\omega$.  It is proved (in ZFC) that:
there is a strongly $n$-separable non $(n+1)$-separable maximal almost
disjoint family (of infinite subset of $\omega$), any for $n=2$, but (under
MA) not necessarily for $n=3$.  Also we have some similar results, also 
independent families and related finitary problems are discussed.

Saharon: read.
\bigskip

\noindent
(B20) \quad \underline{On power-like models for 
hyperinaccessible cardinals} JSL 37(1972), 531-537 (with J.H. Schmerl).

The main result of this paper is the following transfer theorem: if $T$ is an
elementary theory which has $\kappa$-like model where $\kappa$ is Mahlo of
order $\omega$, then $T$ has a $\lambda$-like model for each $\lambda >
\text{ card}(T)$.  This can be expressed as a transfer theorem for 
generalized quantifiers (we get compactness and omitting-type results, too).
\bigskip

\noindent
The partition theorem originally used by the second author is: 
\begin{theorem}
\label{a12}
If $f_\ell$ is an $\ell$-place function on $\kappa$,
where $\kappa$ is $(m+n)$-Mahlo then there is an $m$-Mahlo $\lambda <
\kappa$ and $A \subseteq \lambda$ unbounded, such that if 
$\alpha_1 < \ldots < \alpha_\ell \in A$, $\ell \le n$,
$f(\alpha_1,\dotsc,\alpha_\ell) < \alpha_i$ 
then the value of $f$ does not depend on $\alpha_{i+1},\dotsc,\alpha_n$.

What is sufficient (and proved there) is the version with having 
$A = \{\alpha_1 < \alpha_2 < \ldots < \alpha_n\}$ with $m=0$.
\end{theorem}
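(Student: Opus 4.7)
The plan is to prove the theorem by induction on $n$, with $m$ as a free parameter.

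\emph{Base case $n=1$.} Since $\kappa$ is $(m+1)$-Mahlo, the set of $m$-Mahlo cardinals below $\kappa$ is stationary; pick any such $\lambda$ and set $A=\lambda$. The only condition on $f_1$ is vacuous, because when $\ell=1$ and $i=1$ the list $\alpha_{i+1},\dotsc,\alpha_\ell$ on which the value must not depend is empty.

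\emph{Inductive step $n-1\Rightarrow n$.} Assume the theorem at $n-1$ for every value of the parameter. Given $(m+n)$-Mahlo $\kappa$ and functions $f_1,\dotsc,f_n$, fix a large regular $\theta$ and use the stationary set of $(m+n-1)$-Mahlo cardinals below $\kappa$ to build an elementary submodel $N\prec(H(\theta),\in)$ containing $f_1,\dotsc,f_n$, with $|N|=\mu$, $N\cap\kappa=\mu$, and $\mu$ itself $(m+n-1)$-Mahlo. Apply the inductive hypothesis inside $N$ to the restrictions $f_1\rest\mu,\dotsc,f_{n-1}\rest\mu$ on $\mu$ with parameter $m$; this yields an $m$-Mahlo $\lambda<\mu$ and an unbounded $A_0\subseteq\lambda$ handling the first $n-1$ functions. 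It remains to thin $A_0$, keeping it cofinal in $\lambda$, so that $f_n$ satisfies the regressiveness clause as well.

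For each index $i\le n$ and each tuple $\bar\alpha=(\alpha_1,\dotsc,\alpha_i)\in[A_0]^i$, consider the partial map sending $(\beta_1,\dotsc,\beta_{n-i})$ above $\alpha_i$ to $f_n(\bar\alpha,\bar\beta)$, restricted to the subdomain where this value is strictly below $\alpha_i$. A Fodor-type argument inside $\lambda$, applied for each $i$ from $n$ down to $1$ and using the Mahloness of $\lambda$, lets us thin $A_0$ to the desired unbounded $A\subseteq\lambda$ on which the regressive part of $f_n$ depends only on $(\alpha_1,\dotsc,\alpha_i)$. The main obstacle is the Mahlo bookkeeping: each of the $n$ functions $f_\ell$ must cost exactly one level of Mahloness, and one must check that all $\ell$ choices of the index $i$ for a single $f_\ell$ can be handled simultaneously without extra cost. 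This is feasible because the hypothesis $f_\ell(\bar\alpha)<\alpha_i$ already restricts the output to below $\alpha_i$, so a single Fodor-style application per function suffices; the weaker version mentioned by the author ($m=0$ and $|A|=n$) should emerge as the purely combinatorial skeleton of this induction, with the elementary-submodel step needed only to promote ``regular $\lambda$'' to ``$m$-Mahlo $\lambda$''.
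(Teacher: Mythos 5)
The step on which everything turns --- thinning $A_0$ inside the already-chosen $m$-Mahlo $\lambda$ by ``a Fodor-type argument'' so that $f_n$ becomes canonical --- fails, and not for bookkeeping reasons. Take $n=2$, $i=1$, and suppose $f_2\restriction[\lambda]^2$ takes values in $\{0,1\}$; then the hypothesis $f_2(\alpha_1,\alpha_2)<\alpha_1$ is automatic, so what you need is an unbounded $A\subseteq\lambda$ that is end-homogeneous: for each $\alpha\in A$ the value $f_2(\alpha,\beta)$ is the same for all $\beta\in A$ above $\alpha$. Since $\lambda$ is regular, such an $A$ has cardinality $\lambda$, and applying pigeonhole to the induced one-place function $\alpha\mapsto$ (that constant value) produces a homogeneous set of size $\lambda$. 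So if your thinning worked for every $f_2$, you would have shown $\lambda\rightarrow(\lambda)^2_2$, i.e.\ that $\lambda$ is weakly compact. An $m$-Mahlo $\lambda$ need not be weakly compact, and for a non-weakly-compact $\lambda$ there is a two-valued colouring for which no unbounded end-homogeneous $A\subseteq\lambda$ exists at all. In your scheme $\lambda$ is produced by the induction hypothesis for $f_1,\dotsc,f_{n-1}$ before $f_n$ is ever consulted, so $f_n\restriction[\lambda]^2$ really is an arbitrary colouring and this obstruction bites. Fodor's lemma canonizes one-place regressive functions on stationary sets; it says nothing about killing the dependence of $f_\ell$ on its last $\ell-i$ arguments, which is a partition-relation demand on $[\lambda]^{\ell-i}$ with $|\alpha_i|$ colours and only gets worse when $\ell\ge 3$.

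Consequently the decomposition ``first fix $\lambda$, then repair the top-arity function inside $\lambda$'' has the wrong shape: whatever homogeneity $f_n$ is to enjoy must be extracted from the $(m+n)$-Mahloness of $\kappa$ while $A$ is being assembled, not from $\lambda$ afterwards. Note also that the abstract itself is explicit that what is actually proved in the Schmerl--Shelah paper is only the version with $A=\{\alpha_1<\dotsb<\alpha_n\}$ of size exactly $n$ and $m=0$; there one picks the $\alpha_j$ one at a time from a stationary set of inaccessibles, shrinking it finitely often to stabilize the finitely many predicted values, so the unbounded-homogeneous-set problem never arises. Your argument, if it compiled, would establish the substantially stronger unbounded-$A$ statement. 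As it stands, the base case and the reflection to an $(m+n-1)$-Mahlo $\mu$ via an elementary submodel are fine, but the inductive step is unproved and, in the form you describe it, provably impossible.
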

\bigskip

\noindent
(B21) \quad \underline{On problems of Moser and Hanson} Proc. Symp. in Graph Theory, Springer Lecture, No.303, 75-80 (with P. Erd\H{o}s).

The following problem is due to L. Moser.  Let $A_1,\dotsc,A_n$ be any $n$
sets.  Take the largest subfamily $A_{i_1},\dotsc,A_{i_r}$ which is
\underline{union-free}; i.e. 
$A_{i_{j_1}} \cup A_{i_{j_2}} \ne A_{i_{j_3}}$, $1 \le j_i \le r$, 
$1 \le j_2 \le r$, $1 \le j_3 \le r$, for every triple of distinct sets 
$A_{j_1},A_{j_2},A_{j_3}$.  Put $f(n) = \min(r)$, where the minimum is 
taken over all families of $n$ distinct sets.  

Determine or estimate $f(n)$.
Improving previous bounds we show

\[
\sqrt{2n} - 1 < f(n) < 2 \sqrt{n+1}.
\]

\mn
(B22) \quad \underline{A note on model complete models and generic
  models}, Proc. AMS 34(1972), 509-514.

We prove that there are many maximal model complete ($=$ generic) models,
and that there exists an (uncountable) theory with no generic models.
\bigskip

\noindent
(B23) \quad \underline{Some counterexamples in the
  partition calculus}, J. Comb. Th. A 15(1973), 167-174. (with F. Galvin)

We show that $\aleph_1 \nrightarrow [\aleph_1]^2_4$, i.e.
the pairs (2-element subsets; edges of the complete graph) of
a set of cardinality $\aleph_1$ can be colored with 4 colors so that every
uncountable subset contains pairs of every color, and $2^{\aleph_0}
\nrightarrow [2^{\aleph_0}]^2_{\aleph_0}$, i.e.  the pairs of real
numbers can be colored with $\aleph_0$ colors so that every set of reals of
cardinality $2^{\aleph_0}$ contains pairs of every color.  These results are
counterexamples to certain transfinite analogs of Ramsey's theorem.  Results
of this kind were obtained previously by Sierpinski and by Erd\H{o}s,
Hajnal and Rado.  The Erd\H{o}s-Hajnal-Rado result is much 
stronger than ours, but they used the continuum hypothesis and we do
not.  As by-products, we get an uncountable tournament with no 
uncountable transitive subtournament, and an uncountable partially 
ordered set such that every uncountable subset contains
an infinite antichain and a chain isomorphic to the rationals.
\bigskip

\noindent
(B24) \quad \underline{First order theory of permutation groups}, 
IJM 14(1973), 149-162.
\bigskip

\noindent
(B25) \quad \underline{Errata to first order, theory of 
permutation groups}, IJM 15(1973), 437-441.

We solve the problem of the elementary equivalence (definability) of the
permutation groups over cardinals $\aleph_\alpha$, by getting 
bi-interpretability results.  We show that it suffices to solve the problem
of elementary equivalence (definability) for the ordinals $\alpha$ in certain
second order logic, and this is reduced to the case of $\alpha < 
(2^{\aleph_0})^+$.  E.g. if the ordinal $((2^{\aleph_0})^+)^\omega$ 
(ordinal exponentiation) divides $\alpha_1,\alpha_2$ and
$\cf(\alpha_1)$, $\cf(\alpha_2) > 2^{\aleph_0}$ or $\cf(\alpha_1) = 
\cf(\alpha_2)$, then the permutation groups of $\aleph_{\alpha_1},
\aleph_{\alpha_2}$ are elementarily equivalent.  We also
solve a problem of Mycielski and McKenzie on embedding of free groups in
permutation groups, and discuss some weak second-order quantifiers.
\bigskip

\noindent
(B26) \quad \underline{Notes on combinatorial set theory}, IJM
14(1973), 262-277.

We shall prove some unconnected theorems: 

\noindent
1) (GCH) $\omega_{\alpha +1} \rightarrow (\omega_\alpha + \xi)^2_2$ when
$\aleph_\alpha$ is regular and $|\xi|^+ < \aleph_\alpha$. 

\noindent
2) There is a J\'onsson algebra in $\aleph_{\alpha +n}$, and
$\aleph_{\alpha +n} \rightarrow 
[\aleph_{\alpha +n}]^{n+1}_{\aleph_{\alpha +n}}$ if $2^{\aleph_\alpha} \le
\aleph_{\alpha +n}$. 

\noindent
3) [Universal graph]  If $\lambda > \aleph_0$ is a strong limit
cardinal, then among the graphs with $\le \lambda$ vertices each of 
valence $< \lambda$ there is a universal one. 

\noindent
4) (GCH) If $f$ is a set mapping on $\omega_{\alpha +1}$ ($\aleph_\alpha$
regular) satisfying $\alpha < \beta < \lambda \Rightarrow |f(\alpha) \cap
f(\beta)| < \aleph_\alpha$, then there is a free subset of order-type $\xi$
for every $\xi < \omega_{\alpha +1}$.
\bigskip

\noindent
(B27) \quad \underline{Size direction games over the real line. III.}, 
IJM, 14(1973), 442-449 (with G. Moran).
\bigskip

\noindent
(B28) \quad \underline{There are just four second-order quantifiers}, 
IJM, 14(1973), 262-277.

Among the second-order quantifiers ranging over relations satisfying a
first-order sentence, there are four for which any other one is
bi-interpretable with one of them: the trivial, monadic, permutational and
a full second-order.  (We deal with infinite, one sorted models).  The
interpretation is in fact first-order (second-order parameters but not
quantifiers are used).

See representation in Handbook of model theoretic logics, Baldwin's article
and \cite{Sh:171}.
\bigskip

\noindent
(B29) \quad \underline{A substitute for Hall's theorem for families 
with infinite sets}, J. Comb. Th. A. 16(1974), 199-208.

A sufficient condition for the existence of a system of distinct
representatives for a family $S$ is that $x \in A \in S$ implies 
the number of elements of $A$ is not smaller than the number of sets 
in $S$ to which $x$ belongs.

The method is: by Hall's theorem we reduce the problem for countable $S$
to the divergence of an infinite sum, which is then proved to diverge.
For $S$ uncountable see \cite{Sh:35}.
\bigskip

\noindent
(B30) \quad \underline{The cardinals of simple models 
for universal theories}, Proc. of the Symp. in honor of 
Tarski's 70th birthday, Proc. Symp. in Pure Math 25(1974), 
53-74 (with R. McKenzie)

Some results about spectra of cardinals of simple algebras in varieties and,
more generally, about the cardinals of $T$-simple models where $T$ is a
universal theory are obtained and applications discussed.  $M$ is $T$-simple 
if every homomorphism from $M$ to a model $N$ of $T$ is an embedding or a
constant.  It is shown that if the language of $T$ has power $\kappa$ and if
there exists a $T$-simple model whose power exceeds $2^\kappa$, then 
$T$-simple models exist in all powers $\lambda \ge \kappa$ (the point is that
2-indiscernibility is enough, even a sequence of approximations).  
It is further shown that if the language of $T$ is countable, and 
if there exists an uncountable $T$-simple model, then
there exists a $T$-simple model with the power $2^{\aleph_0}$.  Also
counterexamples are given showing the results are essentially best
possible.

There are generalizations.
\bigskip

\noindent
(B33) \quad \underline{The Hanf number of omitting complete types}, 
PJM 50(1974).

It is proved in this paper that the Hanf number $m^c$ of omitting complete
types by models of complete countable theories is the same as that of
omitting not necessarily complete type by models of a countable theory.  I.e.
\begin{theorem}
\label{a14}
For every ordinal $\alpha < \omega_1$ there is a countable
first-order vocabulary $\tau_\alpha$ and a complete theory $T_\alpha$ in 
$\bbL(\tau_\alpha)$ such
that:
\mn
\begin{enumerate}
\item[(i)]   $p = \{P(\chi_0)\} \cup \{\chi_0 \ne c_n:n < \omega\}$
is a complete type for $T_\alpha$
\sn
\item[(ii)]  $T_\alpha$ has a model of cardinality $\beth_\alpha$ 
omitting $p$
\sn
\item[(iii)]  $T_\alpha$ has no model of cardinality $> \beth_\alpha$
omitting $p$.
\end{enumerate}
\end{theorem}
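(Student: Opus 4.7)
Plan. The strategy is to reduce to Morley's classical computation of the Hanf number for omitting (not necessarily complete) types, and then perform an expansion that forces the omitted type to be complete and of the specific form prescribed.

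First, for each $\alpha<\omega_1$, I would invoke the standard Morley--Lopez-Escobar construction of a countable first-order theory $T'_\alpha$ in a relational countable vocabulary $\tau'_\alpha$ together with a countable partial type $q_\alpha(x)=\{\psi_n(x):n<\omega\}$ whose Hanf number of omission is exactly $\beth_\alpha$; this is built from a well-founded tree of ordinal height $\alpha$, with $q_\alpha$ asserting ``$x$ is a cofinal branch''. By replacing $q_\alpha$ by a completion of it and $T'_\alpha$ by a compatible completion, I may further arrange that $q_\alpha$ is a complete type over $\varnothing$ in $T'_\alpha$.

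Then I would expand $\tau'_\alpha$ to $\tau_\alpha=\tau'_\alpha\cup\{P\}\cup\{c_n:n<\omega\}$, take a model $M'$ of $T'_\alpha$ of cardinality $\beth_\alpha$ omitting $q_\alpha$, and choose distinct elements $c_n^M$ satisfying all the first $n$ approximations $\psi_0,\ldots,\psi_{n-1}$ of $q_\alpha$ (such elements exist because $q_\alpha$ is finitely realized even when omitted); interpret $P^M=\{c_n^M:n<\omega\}$. Let $M$ be the resulting $\tau_\alpha$-structure and put $T_\alpha:=\Th(M)$. By design, $T_\alpha$ contains, for each $n$, the axiom $\forall x[(P(x)\wedge\bigwedge_{k\le n}x\ne c_k)\to\psi_n(x)]$, so any realization of $p=\{P(x_0)\}\cup\{x_0\ne c_n:n<\omega\}$ in any elementary extension realizes $q_\alpha$ in the $\tau'_\alpha$-reduct. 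For (ii), $M$ itself is a model of cardinality $\beth_\alpha$ omitting $p$; for (i), a realization of $p$ in any $N\succeq M$ must realize $q_\alpha$ in the reduct, and because $q_\alpha$ is complete in $T'_\alpha$ and $\tau'_\alpha$ is relational (so $P$ composes only trivially with $\tau'_\alpha$-terms in a single variable), the full $\tau_\alpha$-type of the realization is pinned down; for (iii), if $N\models T_\alpha$ has cardinality greater than $\beth_\alpha$, its $\tau'_\alpha$-reduct is a model of $T'_\alpha$ of the same size and thus realizes $q_\alpha$ by Morley's Hanf bound, and one argues any such realization lies in $P^N$ by having wired the Morley construction so that the branch predicate is literally $P$.

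The main obstacle I expect is arranging (iii) honestly: one must build $P$ and the $c_n$ into the tree construction from the outset so that the statement ``there exists a non-named cofinal branch'' is captured, modulo the approximation axioms, by $\exists x(P(x)\wedge\bigwedge_{k\le n}x\ne c_k)$ in a way that having $P^N=\{c_n^N\}$ really forces the underlying tree of $N$ to have no unnamed branch, which then bounds $|N|$ by the Hanf number of $T'_\alpha$. The completeness of $p$ in step (i) is the second delicate point, but it is comparatively manageable once $q_\alpha$ is chosen complete and the vocabulary is kept relational.
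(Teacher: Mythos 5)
Your proposal has a genuine gap, and it sits exactly at the point you describe as ``comparatively manageable'': the passage from omitting an arbitrary type to omitting a \emph{complete} one. Morley's Hanf computation gives you, for a model $N$ of $T'_\alpha$ with $\|N\|>\beth_\alpha$, a realization of the partial type $q_\alpha$ --- that is, a realization of \emph{some} completion of $q_\alpha$. It does not give you a realization of the particular completion $\bar q_\alpha$ you fixed in advance. A large model may realize $q_\alpha$ only through completions other than $\bar q_\alpha$ and so omit $\bar q_\alpha$; hence clause (iii) is simply not inherited by ``a completion of $q_\alpha$ and a compatible completion of $T'_\alpha$''. Showing that the Hanf number, with the same cutoffs $\beth_\alpha$, survives the restriction to complete types is the entire content of the theorem (this is what the title of B33 announces), so it cannot be obtained by this soft reduction; the construction has to build enough homogeneity into the intended model that all ``unnamed branches'' have the same complete type, and that any large model is forced to realize that one type.

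The same problem resurfaces concretely in your expansion. The axioms $\forall x\,[(P(x)\wedge\bigwedge_{k\le n}x\ne c_k)\to\psi_n(x)]$ force unnamed elements of $P$ to realize $q_\alpha$, but nothing forces realizations of $q_\alpha$ to lie in $P$. Since your $T_\alpha$ is $\Th(M)$ for an $M$ that \emph{omits} $q_\alpha$, the theory carries no first-order information about where realizations of $q_\alpha$ must sit in other models; a model $N$ with $\|N\|>\beth_\alpha$ can interpret $P^N$ as exactly $\{c^N_n:n<\omega\}$ while realizing $q_\alpha$ elsewhere, and then $N$ omits $p$, refuting (iii). Your proposed repair (``wire the branch predicate to be literally $P$'') would require an axiom scheme of the shape $\forall x(\bigwedge_{k\le n}\psi_k(x)\to P(x)\vee\dots)$, and making that consistent with $P^M=\{c^M_n\}$ in a model omitting $q_\alpha$, while still keeping $p$ complete, is precisely the nontrivial construction the theorem calls for. (For what it is worth, the present paper is a collection of abstracts and records only the statement of this theorem, not its proof, so there is no argument here to measure yours against; but on its own terms the reduction you outline does not close.)
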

\bigskip

\noindent
(B34) \quad \underline{Weak definability in infinitary languages}, 
JSL 38(1973), 339-404.

We prove that if a model of cardinality $\kappa$ can be expanded to a model
of a sentence $\psi$ from $\bbL_{\lambda^+,\omega}$ by adding a suitable
predicate in more than $\kappa$ ways, then it has a submodel of power $\mu$
which can be expanded to a model of $\psi$ in $> \mu$ ways provided that
$\lambda,\kappa,\mu$ satisfy suitable conditions.
\begin{tmt}
\label{a16}
1) Let $\psi$ be a sentence in $\bbL_{\lambda^+,\omega}(\tau + P)$, $P$ a unary 
predicate not in $\tau$, $|\tau| \le \lambda$, $M$ a $\tau$-model of cardinality 
$\aleph_{\alpha + \beta}$ 
such that:

\[
\big| \{P:P \subseteq |M|, (M,P) \models \psi\} \big| > \aleph_{\alpha + \beta}.
\]

\mn
Assume further that $\beta < \omega_1$, $\aleph_\alpha$ has cofinality
$\aleph_0$, $\mu_n \ge \lambda$, $\mu = \sum\limits_{n < \omega} \mu_n$, 
$\mu_n < \mu_{n+1}$ and 
$\kappa < \aleph_\alpha \Rightarrow \kappa^{\mu_n} < \aleph_\alpha$ 
for $n < \omega$.

\Then \, $M$ has an elementary submodel $N$ of cardinality $\mu$ such that

\[
\big| \big\{ P : P \subseteq |N|,\ (N,P) \models \psi  \big\} \big| \ge \mu^{\aleph_0}.
\]
\end{tmt}
\bigskip

Another theorem which we shall not prove, as its proof is simpler is
\begin{theorem}
\label{a18}
Let $\psi \in \bbL_{\lambda^+,\omega}(\tau+P)$, $M$ a $\tau$-model of cardinality
$\kappa$ such that $|\{P:P \subseteq |M|,(M,P) \models \psi\}| >
\kappa$.   Assume further that $\mu \ge \lambda$, $\kappa^\mu = \kappa$.
Then $M$ has an elementary submodel $N$ of cardinality $\mu$ such that
$$\big| \big\{P:P \subseteq |N|,\ (N,P) \models \psi \big\} \big| > \mu^{\aleph_0}.$$
\end{theorem}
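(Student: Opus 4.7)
Set $\mathcal{P} := \{P \subseteq |M| : (M,P) \models \psi\}$, so $|\mathcal{P}| > \kappa$; the goal is to find $N \prec M$ with $|N| = \mu$ and $|\{Q \subseteq N : (N,Q) \models \psi\}| > \mu^{\aleph_0}$. The hypothesis $\kappa^\mu = \kappa$ delivers $2^\mu \le \kappa$ and $\mu^{\aleph_0} \le \kappa$, which are the key arithmetic facts. The plan is to apply downward L\"owenheim--Skolem for $\bbL_{\lambda^+,\omega}$ to each expansion $(M,P)$, pigeonhole on the underlying sets of the resulting submodels to pin down a common universe $N^*$, and then run a splitting-tree construction inside $N^*$ to extract more than $\mu^{\aleph_0}$ distinct restrictions.

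First, fix $A_0 \subseteq |M|$ of cardinality $\mu$. For each $P \in \mathcal{P}$, downward L\"owenheim--Skolem for $\bbL_{\lambda^+,\omega}$ applied to $(M,P)$ in the vocabulary $\tau \cup \{P\}$ (of cardinality at most $2^\lambda \le \mu$, since $\mu \ge \lambda$) produces $N_P \supseteq A_0$ of cardinality $\mu$ with $(N_P, P \cap N_P) \prec_{\bbL_{\lambda^+,\omega}} (M,P)$; in particular $(N_P, P \cap N_P) \models \psi$ and $N_P \prec M$. Since there are only $\kappa^\mu = \kappa$ subsets of $|M|$ of cardinality $\mu$ while $|\mathcal{P}| > \kappa$, pigeonhole supplies a single $N^* \subseteq |M|$ of cardinality $\mu$ for which the fiber $\mathcal{P}^* := \{P \in \mathcal{P} : N_P = N^*\}$ has cardinality $> \kappa$; each such $P$ gives an expansion $(N^*, P \cap N^*) \models \psi$, so $N^*$ is the candidate for the desired $N$.

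The heart of the argument is to show $|\{P \cap N^* : P \in \mathcal{P}^*\}| > \mu^{\aleph_0}$. I would construct, by recursion on $n < \omega$, a splitting tree $\{\mathcal{P}_\eta : \eta \in \mu^{<\omega}\}$ of subfamilies of $\mathcal{P}^*$ together with elements $x_\eta \in N^*$ satisfying: $\mathcal{P}_\emptyset = \mathcal{P}^*$; for each $\eta$, the $\mu$ children $\{\mathcal{P}_{\eta,i} : i < \mu\}$ partition $\mathcal{P}_\eta$ according to the patterns of membership of their elements in $\{x_{\eta'} : \eta' \le \eta\}$; and every $\mathcal{P}_\eta$ retains cardinality $> \kappa$. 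The $\mu^{\aleph_0}$ branches of such a tree yield $\mu^{\aleph_0}$ pairwise-distinct restrictions to the set $A := \{x_\eta\}_{\eta \in \mu^{<\omega}} \subseteq N^*$ (which has cardinality $\mu$); a final bookkeeping step, exploiting that each branch still contains $> \kappa$ members while $|\mathcal{P}^*| > \kappa \ge \mu^{\aleph_0}$, upgrades the count to strict inequality $> \mu^{\aleph_0}$.

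The chief technical obstacle is the inductive splitting. At a node $\mathcal{P}_\eta$ of cardinality $> \kappa$, one must locate an element of $N^*$ whose partition of $\mathcal{P}_\eta$ can be refined (using finitely many further elements of $N^*$) into $\mu$ classes each of cardinality $> \kappa$. If this fails at some $\eta$, then the restriction map $P \mapsto P \cap N^*$ on $\mathcal{P}_\eta$ has image of cardinality $< \mu$, and a family of size $> \kappa$ mapping into fewer than $\mu$ values produces a fiber of size $> \kappa$; one then has to use the Skolem-hull identity $N_P = N^*$ together with the counting $\kappa^\mu = \kappa$ to contradict $|\mathcal{P}_\eta| > \kappa$. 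Making this dichotomy airtight --- in particular controlling the fibers of the restriction map via the $\bbL_{\lambda^+,\omega}$-elementarity inherited from the Skolem hull --- is where the bulk of the work of the proof lies.
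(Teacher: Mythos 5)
The paper does not actually prove Theorem \ref{a18} --- it says explicitly that the proof is omitted as a simpler variant of that of the Main Theorem \ref{a16} --- so your proposal must be judged on its own merits and against the architecture that \ref{a16} makes visible. Your opening moves are right and are surely part of the intended argument: take an $L^*$-elementary hull $N_P$ of cardinality $\mu$ for each $P$, where $L^*$ is the fragment of $\bbL_{\lambda^+,\omega}(\tau+P)$ generated by $\psi$ (of cardinality $\le\lambda\le\mu$; your parenthetical bound $2^\lambda\le\mu$ neither follows from $\mu\ge\lambda$ nor is needed), and use $\kappa^\mu=\kappa$ to pigeonhole onto a single $N^*$ with $(N^*,P\cap N^*)\models\psi$ for $>\kappa$ many $P$.

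The heart of your argument has two genuine gaps. First, the splitting dichotomy is unsound as stated: at a node $\mathcal{P}_\eta$ of size $>\kappa$ where splitting fails, you propose to contradict $|\mathcal{P}_\eta|>\kappa$ from the existence of a fiber of the restriction map of size $>\kappa$; but such fibers are not contradictory --- the image of $P\mapsto P\cap N^*$ has size at most $2^\mu\le\kappa$ while $|\mathcal{P}^*|>\kappa$, so large fibers are unavoidable, and nothing in $N_P=N^*$ or $\kappa^\mu=\kappa$ forbids an entire subfamily of size $>\kappa$ from sharing one trace at a given stage. A correct splitting lemma does exist (show that any subfamily $\mathcal{G}$ of size $>\kappa$ is split by a single element of $M$ into two pieces of size $>\kappa$: otherwise the majority traces cohere into one set $P^*$, and mapping each $G\in\mathcal{G}$ to a witness in $G\triangle P^*$ forces some element to split $\mathcal{G}$ after all; then iterate to accumulate $\mu$ large classes), but it is a different argument from the one you sketch. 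Second, and more fundamentally, the branches of your $\omega$-tree are decreasing $\omega$-sequences of families of size $>\kappa$ whose intersection may be empty; a branch then produces a candidate $c_\nu\subseteq N^*$ that is the trace of no actual $P\in\mathcal{P}^*$, and you have no warrant that $(N^*,c_\nu)\models\psi$. This is precisely what the structure of \ref{a16} is built to handle: one constructs an increasing chain $N_0\subseteq N_1\subseteq\cdots$ with $N=\bigcup_n N_n$ together with a tree of partial expansions $P_\eta\subseteq N_n$ arranged so that $(N_n,P_\eta)\prec_{L^*}(N_{n+1},P_{\eta^\frown\langle i\rangle})$, redoing the hull-plus-pigeonhole at every level; the union along a branch is then a model of $\psi$ by the elementary chain theorem for the fragment. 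Your single-hull-then-count-traces plan omits this step entirely. (Separately, the strict inequality cannot be recovered by ``final bookkeeping'': when $\mu^{\aleph_0}=2^\mu$, e.g.\ for $\mu$ strong limit of cofinality $\aleph_0$, the model $N$ has only $\mu^{\aleph_0}$ subsets altogether, so only the $\ge\mu^{\aleph_0}$ form of the conclusion, as in \ref{a16}, can be provable.)
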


Also, (see middle of page 400 there)

\begin{theorem}
\label{a20}
If $T$ is a complete (first order) theory, $|T| = \lambda^+$, $\lambda$ regular 
(for simplicity) and every $n$-type of cardinality $< \lambda$ can
be extended to complete $n$-type of cardinality $< \lambda$ (holds if
$|D(T)| < 2^\lambda$), then $T$ has a model in which every finite 
sequence realizes a complete type of cardinality $< \lambda$.
\end{theorem}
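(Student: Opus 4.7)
The plan is to construct $M$ by a Henkin-style recursion of length $\lambda^+$ which simultaneously produces a consistent Henkin theory $T^*$ in $L^* := L(T) \cup C$, where $C = \{c_\alpha : \alpha < \lambda^+\}$ is a set of fresh constants, together with a coherent assignment $\bar c \mapsto p(\bar c)$ of a complete $L(T)$-type of cardinality $<\lambda$ to each finite tuple $\bar c$ from $C$ that is ever introduced. Here \emph{coherent} means that for any subtuple $\bar c' \subseteq \bar c$ in the family, $p(\bar c')$ and $p(\bar c)$ agree modulo $T$ on formulas in $\bar x_{\bar c'}$. The hypothesis -- every partial $n$-type of cardinality $<\lambda$ extends to a complete $n$-type of cardinality $<\lambda$ -- is invoked at every typing step to keep the assigned types small.

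At each successor stage one addresses a single task: either (a) type a fresh tuple $\bar c$ (possibly carrying a formula $\psi(\bar x_{\bar c})$ forced by an earlier Henkin request), or (b) supply a witness for some $\exists y\,\varphi(y,\bar x_{\bar c}) \in p(\bar c)$ by picking a fresh $c' \in C$ and scheduling a type for $\bar c^{\frown}\langle c'\rangle$ that contains $\varphi(c',\bar c)$. For a typing task on $\bar c$, form the partial data
\[
q_0 := \{\psi\} \cup \bigcup \bigl\{p(\bar c')\text{ re-indexed into }\bar x_{\bar c} : \bar c' \subseteq \bar c \text{ already typed or derivable}\bigr\}.
\]
Since $\bar c$ has only finitely many subtuples and each $p(\bar c')$ has cardinality $<\lambda$, regularity of $\lambda$ gives $|q_0| < \lambda$; coherence of the pre-existing family, together with the individual consistency of each $p(\bar c')$, makes $q_0$ consistent with $T$. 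Apply the hypothesis to extend $q_0$ to a complete type $p(\bar c)$ of cardinality $<\lambda$. For any subtuple $\bar c'' \subsetneq \bar c$ not yet typed, set $p(\bar c'')$ to be the set of formulas $\{\exists \bar z\,(\theta_1 \wedge \cdots \wedge \theta_k) : \theta_1,\ldots,\theta_k \in p(\bar c),\ k < \omega\}$ with $\bar z := \bar x_{\bar c} \setminus \bar x_{\bar c''}$; this is again a complete type of cardinality $<\lambda$ and is automatically coherent with $p(\bar c)$.

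Arrange the $\lambda^+$ stages so that every tuple introduced is eventually typed and every existential demand is witnessed (standard priority bookkeeping). Set $T^* := T \cup \bigcup_{\bar c} \{\phi(\bar c) : \phi(\bar x_{\bar c}) \in p(\bar c)\}$ and let $M$ be its Henkin model. Every element of $M$ is named by some $c_\alpha$, so every finite tuple $\bar a$ in $M$ is the interpretation of some $\bar c$, and $\tp^M(\bar a/\emptyset)$ is generated by $p(\bar c)$ --- hence is a complete type of cardinality $<\lambda$, as required.

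The main obstacle is bookkeeping: keeping the partial data small at each typing step and preserving coherence throughout. Smallness comes from the finiteness of the set of subtuples of any given tuple together with regularity of $\lambda$; coherence is preserved by the uniform passage from supertuple types to subtuple types via existential closure of finite subconjunctions, which makes mutual consistency across the growing family automatic. The genuine content is the repeated invocation of the hypothesis to close each small partial type into a small complete type; everything else is a routine Henkin/priority construction over $\lambda^+$ stages.
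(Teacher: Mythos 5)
The paper gives no proof of this theorem --- item (B34) is an abstract and merely quotes the statement from the original article --- so your argument has to stand on its own, and it has a genuine gap at the one place where the real work lies: the claim that sub/supertuple coherence makes ``mutual consistency across the growing family automatic.'' It does not. When you come to type a tuple $\bar c$, the constraints imposed by the commitments already made are not exhausted by the types of the subtuples of $\bar c$: two previously typed tuples can jointly force a formula about $\bar c$ through a shared constant lying \emph{outside} $\bar c$. Concretely, let $T$ contain an equivalence relation $E$, and suppose the bookkeeping has already typed $\langle c_0,c_2\rangle$ and $\langle c_1,c_2\rangle$ with complete types committing to $E(c_0,c_2)$ and $E(c_1,c_2)$ (say $c_2$ arose as a Henkin witness). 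Any consistent completion must now put $E(x_0,x_1)$ into the type of $\langle c_0,c_1\rangle$; but when that tuple comes up, your $q_0$ consists only of the projected $1$-types of $c_0$ and of $c_1$, which say nothing about $E(c_0,c_1)$, so a complete extension containing $\neg E(x_0,x_1)$ is consistent with $q_0$ yet makes $T^*$ inconsistent by transitivity. The inductive consistency of $T^*$ is therefore not preserved by your typing step, and no reordering of the tasks repairs this as long as each tuple is constrained only by its subtuples.

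The correct constraint set for $\bar c$ is the full trace on $\bar x_{\bar c}$ of everything committed so far, i.e.\ all formulas $\exists \bar z \bigwedge \Sigma$ with $\Sigma$ a finite subset of the accumulated theory and $\bar z$ the extraneous constants; the substance of the theorem is showing that this trace remains generated by fewer than $\lambda$ formulas, so that the extension hypothesis can be applied to it. That forces a different architecture: introduce the constants one at a time, and when a new constant $c$ appears, run an inner recursion of length $\le\lambda$ assigning a small complete type to \emph{every} tuple $\bar d c$ with $\bar d$ among the old constants, each inner step extending the trace of all previous commitments; it is the regularity of $\lambda$ that keeps the union of the fewer than $\lambda$ many previous commitments, hence the trace, of size $<\lambda$. (In your sketch regularity does nothing --- a finite union of sets of size $<\lambda$ is small for any infinite $\lambda$ --- and the fact that the stated hypothesis is idle is itself a symptom that the hard step has been skipped.) The skeleton of Henkin constants, repeated use of the extension hypothesis, and bookkeeping over $\lambda^+$ stages is right, but the consistency mechanism must be rebuilt around full traces rather than subtuple coherence.
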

\bigskip

\noindent
(B35) \quad \underline{Sufficiency conditions for the existence of 
transversals} Cand. J. Math 26 (1974), 948-961 (with E.C. Milner).

The main theorem has an interesting formulation in terms of bipartite
graphs.  A bipartite graph is a triple $\Gamma = \langle X,\Delta,Y \rangle$
with vertex set $X \cup Y$ ($X,Y$ disjoint sets) and edge set
$\Delta \subseteq \{\{x,y\}:x \in X,y \in Y\}$.   
Let $v(z) =: \big|\{u \in X \cup Y:\{u,z\} \in \Delta\} \big|$ 
(for $z \in X \cup Y$) be the valency function of
$\Gamma$.  Then the main theorem is equivalent to the following statement:
if $\Gamma = \langle X,\Delta,Y \rangle$ is a bipartite graph such that
$v(x) > 0$ for $x \in X$ and $v(x) \ge v(y)$ whenever $x \in X,y \in Y$ and
$\{x,y\} \in \Delta$, then there is a matching from $X$ into $Y$, i.e. there
is a 1-1 function $\varphi:X \rightarrow Y$ such that
$\{x,\varphi(x)\} \in \Delta$ (for $x \in X$).

In \S7 we prove even stronger results.
\bigskip

\noindent
(B36) \quad \underline{Remarks on Cardinal invariants in Topology}, 
General Topology and its Applications 7(1977), 251-259.

In \S2 we investigate to what degree singular calibres are preserved by
products (for regular calibres Sanin solved the problem).  We also generalize
the concept to pairs of cardinals.
\bigskip

\noindent
We prove that
\begin{theorem}\label{a22}  
    If $\lambda$ is a calibre of $X,\cf(\lambda) > \aleph_0$, then $\lambda$ is a calibre of $X^I$.
\end{theorem}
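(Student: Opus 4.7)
The plan is to combine a $\Delta$-system refinement on the supports of basic open sets in $X^I$ with the observation that the calibre $\lambda$ transfers automatically to finite powers of $X$. First I would reduce to basic open boxes: given $\lambda$ non-empty open sets in $X^I$, shrink each to a non-empty basic open set $U_\alpha = \prod_{i \in I} V_{\alpha,i}$, where $V_{\alpha,i} = X$ outside a finite support $s_\alpha = \{i \in I : V_{\alpha,i} \ne X\}$. It suffices to find $\Lambda \subseteq \lambda$ of size $\lambda$ and a point $p \in \bigcap_{\alpha \in \Lambda} U_\alpha$.

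Next I would apply a $\Delta$-system-type refinement to $\{s_\alpha : \alpha < \lambda\}$. Since $\cf(\lambda) > \aleph_0$ and each $s_\alpha$ is finite, some fixed $n < \omega$ has $|s_\alpha| = n$ for $\lambda$ many $\alpha$, and I restrict to these. Then by induction on $n$ — at each step either some $i \in I$ lies in $\lambda$ of the sets (push $i$ into the root and recurse on $s_\alpha \setminus \{i\}$), or every element lies in $< \lambda$ of the sets, in which case $\cf(\lambda) > \aleph_0$ lets one extract a subfamily of size $\lambda$ of pairwise disjoint sets by transfinite recursion — I obtain $\Lambda_1 \subseteq \lambda$ with $|\Lambda_1| = \lambda$ and a finite $s^* \subseteq I$ such that $\{s_\alpha : \alpha \in \Lambda_1\}$ is a $\Delta$-system with root $s^*$.

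Third I would handle the root coordinates. The boxes $W_\alpha := \prod_{i \in s^*} V_{\alpha,i}$ form $\lambda$ non-empty open sets in the finite power $X^{s^*}$. A straightforward induction on $|s^*|$ shows that $\lambda$ is a calibre of every finite power of $X$: in $X \times Y$, given $\lambda$ boxes $V_\alpha \times W_\alpha$, first use the calibre of $X$ on the first factor to thin the family and fix $x^* \in \bigcap V_\alpha$, then use it on the second factor to thin again and fix $y^* \in \bigcap W_\alpha$ (no cofinality hypothesis is needed here). Applying this to $\{W_\alpha : \alpha \in \Lambda_1\}$ yields $\Lambda_2 \subseteq \Lambda_1$ with $|\Lambda_2| = \lambda$ and a point $p^* \in X^{s^*}$ with $p^* \in W_\alpha$ for all $\alpha \in \Lambda_2$.

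Finally I would assemble the common point $p \in X^I$: for each $\alpha \in \Lambda_2$ pick $p_\alpha \in \prod_{i \in s_\alpha \setminus s^*} V_{\alpha,i}$ (non-empty by assumption), fix any $x_0 \in X$, and set $p(i) = p^*(i)$ on $s^*$, $p(i) = p_\alpha(i)$ on $s_\alpha \setminus s^*$ (well-defined since the tails are pairwise disjoint by the $\Delta$-system property), and $p(i) = x_0$ elsewhere. Then $p \in U_\alpha$ for every $\alpha \in \Lambda_2$, as required. The main obstacle is the $\Delta$-system step in the singular case: one must use $\cf(\lambda) > \aleph_0$ both to collapse to a common support size $n$ and to maintain cardinality $\lambda$ throughout the recursive thinning; without this assumption the argument breaks down, in line with the fact that calibre $\aleph_0$ need not survive infinite products.
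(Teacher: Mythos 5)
Your argument is essentially Sanin's classical proof, and it is correct whenever $\lambda$ is \emph{regular} uncountable. But the theorem is stated under the weaker hypothesis $\cf(\lambda)>\aleph_0$, and the surrounding abstract makes clear that the whole point of that section is the singular case (``for regular calibres Sanin solved the problem''). For singular $\lambda$ your $\Delta$-system step is where the proof breaks. The assertion that, when every index lies in fewer than $\lambda$ of the sets $s_\alpha$, one can extract $\lambda$ pairwise disjoint members by transfinite recursion is false for singular $\lambda$ of uncountable cofinality, already for singletons: take $\lambda=\aleph_{\omega_1}$, partition $\lambda$ into blocks $A_\xi$ ($\xi<\omega_1$) with $|A_\xi|=\aleph_{\xi+1}$, and put $s_\alpha=\{\xi\}$ for $\alpha\in A_\xi$. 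Each index $\xi$ occurs only $\aleph_{\xi+1}<\lambda$ times, yet the family contains only $\aleph_1$ distinct sets, so every subfamily of size $\lambda$ contains both equal pairs and disjoint pairs and hence is not a $\Delta$-system; your recursion stalls after $\aleph_1<\lambda$ steps. The underlying reason is that the $\Delta$-system lemma genuinely needs regularity: after $\mu$ steps the chosen sets occupy $\le n\mu$ indices, each of which excludes $<\lambda$ further members, and once $n\mu\ge\cf(\lambda)$ these exclusions can exhaust the entire family when $\lambda$ is singular.

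The remaining steps (reduction to basic boxes, constancy of $|s_\alpha|$ via $\cf(\lambda)>\aleph_0$, transfer of the calibre to finite powers, and the final assembly of the point) are all fine, so what is missing is precisely the combinatorial core of the singular case --- which is the actual content of the theorem beyond Sanin. To repair it one must replace the global $\Delta$-system lemma by a weaker decomposition: e.g.\ stratify the family along a cofinal sequence $\lambda_\xi\nearrow\lambda$ of regular cardinals, extract a $\Delta$-system of size $\lambda_\xi$ inside each stratum by the regular-cardinal lemma, and then invoke the calibre hypothesis again (on $X$ and its finite powers) to reconcile the $\cf(\lambda)$ many roots and glue the strata into a single subfamily of size $\lambda$ with a common point. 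Without some such additional argument the proof covers only the regular case.
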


In \S1 we construct a $T_2$ space $X$ with $d(X) = 2^{\aleph_0}$ and
$s(x) = \aleph_0$; i.e. there is no discrete subspace of power $\aleph_1$.
We also show that the upper bound in the definition of $h(X) = \lambda$ is
realized if $X$ is Hausdorff and $\cf(\lambda) = \aleph_0$.  The theorem on
$h(X) = \lambda$, for $X$ Hausdorff was proved independently by Hajnal and
Juhasz and the author.

In \S3 we show that the class of calibres of a $T_2$ space is just about
arbitrary.
\bigskip

\noindent
(B37) \quad \underline{A two-cardinal theorem}, Proc. AMS 481(1975), 207-213.

We prove the following theorem and deal with some related questions: if
for all $n < \omega$, $T$ has a model $M$ such that $n + |Q^M|^n \le
|P^M| < \aleph_0$ then for all $\lambda,\mu$ such that
$|T| \le \mu \le \lambda < \Ded^*(\mu)$ (e.g. $\mu = \aleph_0,
\lambda = 2^{\aleph_0}$), $T$ has a model of type $(\lambda,\mu)$, i.e.
$|Q^M| = \mu$, $|P^M| = \lambda$.  We use the existence of free subsets.
\bigskip

\noindent
\textbf{Question}:\label{b37.1}  Is our result the best possible?  
That is, does there exist a sentence which for every $n$ has 
a model $M$, $\aleph_0 > |P^M| > |Q^M|^n$, $|Q^M| \ge n$, but does 
not have a $(2^\mu,\mu)$-model for some $\mu$, and even: has a 
$(\lambda,\mu)$-model iff $\mu \le \lambda < \Ded^*(\mu)$ 
(assuming for some $\mu,\Ded^*(\mu) \le 2^\mu$).
\bigskip

\noindent
\textbf{Conjecture}:\label{b37.2}  $\{(m_i,n_i,k_i):i < \omega\} 
\rightarrow (\lambda,\mu,k)$ when $m_i \ge n^i_i$, $n_i \ge k^i_i$, 
$k_i \ge i$, $k \le \mu \le \lambda < \Ded^*k$.
\bigskip

\noindent
\textbf{Conjecture}:\label{b37.3}  $\{(2^{n_i},n_i):i < \omega\} 
\rightarrow (2^\mu,\mu)[n_i \ge i]$.  (Some information is suggested).
\bigskip

\noindent
(B38) \quad \underline{Graphs with prescribed asymmetry and minimal 
number of edges}, Erd\H os Symp. (infinite and finite sets), 
Vol.III(1975), 1241-1256.

We shall deal with non-directed graphs, without loops and double edges, and
having a finite number of vertices.

A graph is symmetric if it has a non-trivial automorphism = a permutation of
its vertices, such that a pair of vertices is connected iff their images are
connected.  The asymmetry of a graph is the minimal number of changes (i.e.
adding and deleting of edges) which is necessary to make the graph symmetric.
Erd\H{o}s and Renyi \cite{ErdRen63} 
defined and investigated this notion, and defined,
$F(n,k) \, [C(n,k)]$ for $k \ge 1,n > 1$ as the minimal number of edges in a
[connected] graph with $n$ vertices which is asymmetric is $k$; if there is no
such graph the value of the function will be $\infty$ (if $n$ is too small,
this happens).
\bigskip

\begin{theorem}
\label{0.1}
For $n$ sufficiently large

\[
F(n,2) = n+1,\ C(n,2) = n+2.
\]
\end{theorem}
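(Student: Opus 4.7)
The plan is to establish both equalities separately, with the two upper bounds proved by explicit construction and the two lower bounds proved by a structural case analysis on sparse graphs. The sharpness of the constants $n+1$ and $n+2$ reflects the fact that graphs with only slightly more than $n$ edges are the smallest that can already resist every single-edge modification from becoming symmetric.

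For the upper bounds, I would construct a connected graph $G_n$ on $n$ vertices with exactly $n+2$ edges and verify it has asymmetry $2$. A natural candidate starts from an asymmetric tree $T$ on $n$ vertices (which exists for $n \geq 7$) and attaches two additional ``protective'' edges — chosen so that every pair of vertices with identical rooted-subtree types in $T$ is now distinguished by the new edges, while a specific pair of edges can be swapped to produce a non-trivial automorphism. A finite check (essentially: enumerate the at most $\binom{n+2}{1}+\binom{\binom{n}{2}-n-2}{1}$ single modifications and show each preserves asymmetry) completes the verification. Then $F(n,2) \leq n+1$ follows by removing connectivity: delete one bridge from $G_n$ and check that neither the removed bridge endpoint nor the resulting isolated/leaf structure introduces a single-edge symmetrization.

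For the lower bound $F(n,2) \geq n+1$, let $G$ have $n$ vertices and $|E(G)| \leq n$; I will show that $G$ either has a non-trivial automorphism already, or acquires one after a single edge change. Since $\sum \deg(v) \leq 2n$, the graph is sparse, and writing $c$ for the number of components one has cyclomatic number $|E| - n + c \leq c$, so ``on average'' each component is a tree or unicyclic. The proof proceeds by case analysis: (i) two isolated vertices give an immediate swap; (ii) two isomorphic components give a swap; (iii) two leaves with a common neighbor give a swap; (iv) otherwise, the constraints force the shape of each component so tightly that one can locate two vertices whose rooted-component types differ only at a single edge, and flipping that edge produces the required automorphism. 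The argument for $C(n,2) \geq n+2$ is parallel but carried out on a single connected graph of cyclomatic number at most $2$. The main obstacle is step (iv): the ``residual'' configurations — asymmetric trees, asymmetric unicyclic graphs, and (for the connected case) asymmetric bicyclic graphs — must each be shown to admit a one-edge symmetrization, and this requires a careful classification of how near-symmetric such sparse structures can be, using the largeness of $n$ to exclude finitely many small pathological configurations.
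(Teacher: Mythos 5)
The paper you are comparing against is an abstracts collection: it states Theorem~\ref{0.1} without proof and records only that the original Erd\H{o}s--Shelah argument ``uses a detailed analysis of examples,'' so your proposal can only be judged on its own merits. Its architecture (explicit extremal constructions for the upper bounds; for the lower bounds, a proof that every $n$-vertex graph with at most $n$ edges, resp.\ every connected one with at most $n+1$ edges, has asymmetry at most $1$) is essentially forced by the statement and is surely the right skeleton. But the upper-bound half already contains a concrete error: an asymmetric tree on $n$ vertices has $n-1$ edges, so attaching two protective edges produces a connected graph with $n+1$ edges, not the $n+2$ you announce --- and if such a graph really had asymmetry $2$ it would refute $C(n,2)=n+2$. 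Moreover no graph is actually exhibited: the condition you impose on the new edges (distinguishing ``vertices with identical rooted-subtree types'') is vacuous for an asymmetric tree, whose orbits are already singletons; what must be shown is that none of the $\binom{n}{2}$ possible single edge changes creates a nontrivial automorphism, and that requires a uniform argument in $n$, not a ``finite check.'' The derivation of $F(n,2)\le n+1$ by deleting a bridge from the connected example is also unjustified (the asymmetry of the resulting disconnected graph must be re-established from scratch), and since the definition asks for asymmetry exactly $2$, you must additionally verify that two changes do symmetrize your examples.

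The deeper gap is in the lower bounds, where your step (iv) --- the ``main obstacle'' --- is precisely the mathematical content of the theorem and is left entirely unproved. One must show that every asymmetric tree (this is an Erd\H{o}s--R\'enyi theorem), every asymmetric unicyclic graph, and, for $C(n,2)$, every asymmetric connected graph of cyclomatic number $2$ admits a one-edge symmetrization; the latter two cases need genuine structural arguments (e.g.\ reducing to the base multigraph obtained by suppressing degree-$2$ vertices and pendant trees and comparing the attached trees and path lengths), and the ``sufficiently large $n$'' in the statement warns that small exceptional configurations exist and must be located. In the disconnected case there is a further interaction your cases (i)--(iii) do not cover: a component consisting of a single vertex cannot be symmetrized internally, and the one permitted edge change may join two components, so the analysis cannot simply be done component by component. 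Until these classifications are supplied, both inequalities $F(n,2)\ge n+1$ and $C(n,2)\ge n+2$ are asserted rather than proved; what you have is a plausible plan whose hard parts are all deferred.
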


\begin{theorem}
\label{0.2}  
For odd $k>2$ and $n$ sufficiently larger than $k$

\[
F(n,k) = C(n,k) = [(k+3)n/4] - 0.5[2n/(k+3)+ 1/2].
\]
\end{theorem}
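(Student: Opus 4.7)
\medskip
\noindent
\textbf{Proof proposal for Theorem \ref{0.2}.}

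The plan has the usual two halves of an extremal problem: a construction realizing the claimed value, and a matching lower bound showing no asymmetric graph with fewer edges has asymmetry $k$.

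For the upper bound, I would build an explicit graph $G_{n,k}$ on $n$ vertices with $\lfloor (k+3)n/4\rfloor - \lfloor (2n/(k+3)+1/2)/2\rfloor$ edges whose asymmetry is exactly $k$. The shape of the formula suggests that the "core" of $G_{n,k}$ should be (almost-)regular of degree $(k+3)/2$; the subtractive term looks like a boundary correction from packing $n$ vertices into blocks of size about $(k+3)/2$. Concretely, I would take a long ``backbone'' and decorate it with small rigid gadgets of size $O(k)$ attached at spaced intervals so that (i) the only cheap way to create an automorphism is a transposition of two twin-like vertices along the backbone; (ii) every such transposition requires repairing a symmetric difference of neighborhoods of size exactly $k$; (iii) the graph is connected, so the same construction witnesses $C(n,k)$. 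Counting edges in the gadgets and the backbone should reproduce the formula; for $k=3$ the construction should reduce to one already implicit in the Erd\H os--R\'enyi paper \cite{ErdRen63}.

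For the lower bound, suppose $G$ is a graph on $n$ vertices with asymmetry at least $k$. The key inequality is:
\[
\textrm{for all distinct } u,v\in V(G),\ |N(u)\triangle N(v)|\ \ge\ k,
\]
since the transposition $(u\,v)$ becomes an automorphism after altering exactly the edges in $N(u)\triangle N(v)$ (and possibly toggling $uv$, which is absorbed into the symmetric difference using the convention that $u\in N(v)\iff v\in N(u)$). Summing this over pairs and using a standard convexity / double-count argument gives a lower bound on $\sum_v \binom{d(v)}{?}$ and hence on $\sum_v d(v)=2|E(G)|$, yielding an average degree at least about $(k+3)/2$. To turn this into the sharp constant one needs to handle the odd parity of $k$ carefully: the extra ``$+3$'' rather than ``$+1$'' comes from the fact that for odd $k$ one cannot realize $|N(u)\triangle N(v)|=k$ with a regular degree of $k/2$, so at least one endpoint has degree $\ge (k+3)/2$. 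A block-by-block optimization of this constraint, of the form Erd\H os and R\'enyi used for $k=1$, should yield the exact floor terms. For $C(n,k)$ I would argue that adding a connectivity requirement costs no extra edges once $n\gg k$, by checking that the extremal construction above can be chosen connected without modification.

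The main obstacle will be the lower bound matching the exact floor-correction $\lfloor (2n/(k+3)+1/2)/2\rfloor$. The inequality $|N(u)\triangle N(v)|\ge k$ only yields the leading $(k+3)n/4$ term; to recover the savings one must exploit the fact that a ``near-automorphism'' need not be a single transposition but can be any permutation of $V(G)$, and conversely that certain degree sequences force \emph{multiple} cheap transpositions to coexist. The case analysis distinguishing whether $n$ is divisible by $(k+3)/2$ (or lies in various residue classes) will be delicate, and is where the parity assumption ``$k$ odd'' is genuinely used; the even case behaves differently and is presumably treated separately in the paper.
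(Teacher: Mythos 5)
The source you are being compared against is a collection of abstracts: item (B38) merely states Theorem \ref{0.2} and records that the original Erd\H{o}s Symposium paper proves these results ``only for $k \ge 41$'' via ``a detailed analysis of examples,'' so there is no proof in this document to match your argument against step by step. Judged on its own terms, your proposal identifies the right framework --- an explicit extremal construction plus a lower bound driven by the fact that asymmetry $\ge k$ forces every pair of vertices to have neighbourhood symmetric difference $\ge k$, which is precisely the Erd\H{o}s--R\'enyi mechanism --- but neither half is carried out, and the lower bound as sketched provably cannot reach the stated constant.

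Concretely: summing $|N(u)\triangle N(v)|\ge k$ over all pairs and double counting gives $\sum_w d(w)\bigl(n-d(w)\bigr)\ge k\binom{n}{2}$, which yields average degree roughly $k/2$ and hence only $|E(G)|\gtrsim kn/4$. The true leading term is $(k+3)n/4$, so this argument already loses an amount linear in $n$ and, a fortiori, has no hope of recovering the secondary term $0.5[2n/(k+3)+1/2]$. Your parity observation (if $d(u)=d(v)$ then $|N(u)\triangle N(v)|$ is even, so odd $k$ forces $\ge k+1$) upgrades $k$ to $k+1$ at best, still short of $k+3$; the jump to average degree $(k+3)/2$ requires a structural argument bounding how many vertices can simultaneously have degree $\le (k+1)/2$ before two of them admit a repair costing $<k$, and you do not supply it. On the other side, no construction is actually exhibited: the exact secondary term must fall out of an edge count of a concrete family of graphs, and ``a backbone decorated with rigid gadgets'' does not determine one, nor is it verified that every non-trivial permutation (not just a transposition) costs $\ge k$ repairs for the proposed family. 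You flag both points yourself as ``the main obstacle,'' which is an accurate self-assessment: this is a plausible plan, not a proof, and the fact that even the original paper restricts to $k\ge 41$ indicates that the case analysis needed to close these gaps is substantial.
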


\begin{theorem}
\label{0.3}
For even $k>2$ and $n$ sufficiently larger than $k$

\[
F(n,k) = C(n,k) = [(k+2)n/4 + 1/2].
\]

\mn
The proof is given only for $k \ge 41$, and uses a detailed analysis of
examples.
\end{theorem}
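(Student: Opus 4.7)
The target is $F(n,k) = C(n,k) = \lfloor (k+2)n/4 + 1/2 \rfloor$ for even $k > 2$ and $n$ sufficiently large, with the proof actually covering $k \ge 41$. My plan follows the standard extremal two-part strategy: construct an explicit connected graph achieving the stated edge count with asymmetry at least $k$, and show that any graph with fewer edges has asymmetry strictly less than $k$.

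For the upper bound, I would build a near-$\tfrac{k+2}{2}$-regular graph on $n$ vertices, motivated by the observation that $\lfloor (k+2)n/4 + 1/2 \rfloor$ is essentially the edge count of a $\tfrac{k+2}{2}$-regular graph on $n$ vertices (note $(k+2)/2$ is an integer since $k$ is even). The construction would chain together $\Theta(n/k)$ copies of a carefully chosen rigid gadget $H$ of order $\approx k+2$ which is $\tfrac{k+2}{2}$-regular and whose cheapest non-trivial automorphism after edge modification costs exactly $k$ changes. Distinguishing endcaps are attached to force any would-be automorphism of the full graph to act within a single gadget rather than by permuting the copies. Verifying that the resulting graph is connected, has the claimed edge count up to the $+1/2$ rounding, and has asymmetry at least $k$ is then a local check on $H$ and its interfaces.

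For the lower bound, suppose $G$ has $n$ vertices and strictly fewer than $\lfloor (k+2)n/4 + 1/2 \rfloor$ edges. The degree sum is then less than $\tfrac{(k+2)n}{2}$, so a positive fraction of vertices have degree at most $\tfrac{k}{2}$. I would classify the vertices of $G$ by the isomorphism type of a bounded-radius neighborhood (tagged by where the rest of the graph attaches) and use pigeonhole to produce many vertices sharing a common local type. Among any sufficiently large same-type family, I would locate a pair $u,v$ whose closed neighborhoods coincide after the transposition $u \leftrightarrow v$ up to at most $k-1$ discrepant edges; implementing the swap and correcting those edges produces a non-trivial involution of the modified graph, contradicting asymmetry $\ge k$.

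The main obstacle is quantitative tightness of the lower bound: the naive cost of exchanging two near-twins of degree $\tfrac{k+2}{2}$ can reach $\sim k+2$ edge changes, so bringing the cost below $k$ requires exploiting the specific structure of the near-match and, when no cheap pair exists locally, trading edge-deficiency against the required number of modifications. One must show that whenever every candidate swap costs at least $k$, the obstructions themselves force so many edges that the assumed bound $|E(G)| < \lfloor (k+2)n/4 + 1/2 \rfloor$ fails. The restriction $k \ge 41$ reflects the threshold at which this accounting dominates the finitely many ``bad'' local configurations that have to be inspected by hand; smaller even $k$ would need a separate analysis. Finally, the equality $F(n,k) = C(n,k)$ is handled by noting that the upper-bound example is already connected, while any disconnected minimizer can be reconnected by a single edge, a perturbation absorbed by the error terms.
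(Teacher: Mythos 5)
Your proposal is a strategy outline rather than a proof, and the two load-bearing steps are both left open. The source here is only an abstract: all it records about the argument is that the theorem is proved for $k \ge 41$ by ``a detailed analysis of examples,'' i.e.\ precisely by exhibiting and dissecting explicit extremal configurations. Your upper-bound half invokes ``a carefully chosen rigid gadget $H$'' without producing one, and producing it is not routine: a chain of \emph{identical} $\tfrac{k+2}{2}$-regular gadgets is dangerous, because a transposition of two whole interior copies, or a reflection of the chain, can potentially be realized after rewiring only the $O(1)$ attachment edges between consecutive copies; distinguishing endcaps kill the reflection but not cheap internal swaps, so you must either make the copies pairwise non-isomorphic or make every inter-copy link cost $\ge k$ changes to disturb, and you must do this while landing on the exact value $[(k+2)n/4+1/2]$, not merely its asymptotic order. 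The equality claim requires the example to be optimal to the last edge, so ``up to the $+1/2$ rounding'' is not an acceptable margin.

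The lower bound is where the real gap sits, and you name it yourself: for two vertices $u,v$ of degree about $\tfrac{k+2}{2}$, the Erd\H{o}s--R\'enyi swap costs $|N(u)\,\triangle\,N(v)|$ (adjusted for $u,v$ themselves), which can be as large as $k+2 > k$, so the existence of many same-type low-degree vertices does not by itself yield a symmetrizing modification of cost $< k$. The entire content of the theorem is the accounting that converts ``every candidate swap costs $\ge k$'' into ``the degree sequence forces at least $[(k+2)n/4+1/2]$ edges,'' including the exact treatment of vertices of degree below $\tfrac{k}{2}+1$ and the finitely many exceptional local configurations that the restriction $k \ge 41$ is there to control. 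You state that this must be shown but do not show it, so the proposal does not establish the statement; it is a correct identification of the shape of the proof (explicit construction plus tight degree/neighborhood counting, which is consistent with the paper's description) with the decisive quantitative lemma missing.
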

\bigskip

\noindent
(B39) \quad \underline{Differentially closed fields}, IJM 16(1973), 314-328.

We prove that even the prime, differentially closed field of characteristic
zero, is not minimal; that over every differential radical field of
characteristic $p$, there is a closed prime one, and that the theory of
closed differential radical fields of characteristic $p(> 0)$ is
stable (hence the prime model above is unique).
\bigskip

\noindent
(B40) \quad \underline{Notes on Partition Calculus}, Erd\H{o}s Symp. 
Vol.III, 1257-1276.

We deal here with some separate problems which appeared in the problem list
of Erd\H{o}s and Hajnal \cite{EH}.

In \S1 we solve problem 3 of \cite{EH} affirmatively.  Thus, if
$\aleph_\omega < 2^{\aleph_{n(0)}} < \ldots < 2^{\aleph_{n(k)}} < \ldots$,
then $\sum\limits_{n < \omega} 2^{\aleph_n} \rightarrow (\aleph_\omega,
\aleph_\omega)^2$.  We prove a canonization lemma for it.

In \S2 we deal with problem 32 of \cite{EH} which asks whether there is a graph
$G$ with $\aleph_1$ vertices and with no $[[\aleph_0,\aleph_1]]$ subgraph
for which $\aleph_1 \rightarrow (G,G)^2$.  We provide a wide class of such
graphs, assuming CH.  If $\bfV = \bfL$ is assumed we show that
$\aleph_1 \rightarrow (G,G)^2$ iff $G$ has coloring number $\le \aleph_0$.

In \S3 we deal with problems 48, 50 of \cite{EH} (asked by Erd\H{o}s and
Hajnal): partition relations concerning coloring numbers.

In \S4 we deal with problem 42 of \cite{EH}.  We get compactness and incompactness
results on the existence of transversals and on property $B$.

We also find sufficient and necessary conditions for the existence of
transversals for a family of sets.
\bigskip

\noindent
(B41) \quad \underline{Some theorems on transversals} 
 Erd\H{o}s Symp., Vol.III, 115-1126 (with E.C. Milner).

We prove e.g. $\NPT(\lambda,\aleph_0) \rightarrow \NPT
(\lambda^+,\aleph_0)$ where $P(\lambda)$:
there is a family $F$ of $\lambda$ countable sets such that $F$ 
has no transversals but every $F' \subseteq F$, $|F'| < |F|$ has a 
transversal.
\bigskip

\noindent
(B42) \quad \underline{The monadic theory of order}, Annals of 
Math 102(1975), 379-419.

First, a decidability method is developed and used to decide in a
uniform and simplified way all known decidable fragments of the monadic
theory of order: the monadic theory of $\omega$, the monadic theory of
$\omega_1$, the monadic theory of countable orders and so on.  The method
is used to reduce the monadic theory of ordinals $\alpha < \lambda^+$ to
that of $\lambda$ and to reduce the monadic theory of $\omega_2$ to the
first-order theory of the Boolean algebra of subsets of $\omega_2$ module
closed unbounded sets with an operation $F(X) = \{\alpha:\cf(\alpha) =
\omega_1$ and $X \cap \alpha$ is stationary in $\alpha\}$ and similarly for
any cardinal.  The new proof of decidability of the monadic theory 
of countable orders helps to prove that
countable orders cannot be characterized in the monadic logic.  To develop
the method a useful Feferman-Vaught type theorem for the monadic theory of
order is proven and also two Ramsey type theorems are proven.
\bigskip

\noindent
E.g. 
\begin{theorem}\label{b42}
1) If $c$ is an additive two-place function form a dense
linear order (i.e. if $x < y < z$, $f(x,z)$ is determined by 
$f(x,y),f(y,z)$) and $f$ has a finite range, then there is a
$c$-homogeneous set which is dense in some interval (this is made to help
prove decidability). 

\noindent
2) If $\lambda$ is regular, $c$ and additive two-place function 
from $\lambda$ with range of cardinality $< \lambda$, \then \,
there is a $c$-homogeneous set which is unbounded in $\lambda$.
\end{theorem}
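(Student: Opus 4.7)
The approach treats additivity as providing an associative binary operation $\ast$ on the range $R$, defined by $c(x,y)\ast c(y,z)=c(x,z)$ for $x<y<z$; associativity follows from applying additivity to four points. Any $c$-homogeneous set of size $\ge 3$ with color $v$ forces $v\ast v=v$, so the plan is to locate an idempotent value $v^*\in R$ and then build the homogeneous set with that color.

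For part (2), regularity of $\lambda$ together with $|R|<\lambda$ implies that for each $\alpha<\lambda$ there exists $v(\alpha)\in R$ with the fiber $B_\alpha^{v(\alpha)}:=\{\beta>\alpha:c(\alpha,\beta)=v(\alpha)\}$ unbounded in $\lambda$. Applying regularity again to $\alpha\mapsto v(\alpha)$ yields a single color $v^*$ and an unbounded $S\subseteq\lambda$ with $v(\alpha)=v^*$ for $\alpha\in S$. A three-point chain $\alpha<\beta<\gamma$ in $S$, where $\beta$ is chosen in $B_\alpha^{v^*}$ and $\gamma$ above $\beta$ in $B_\alpha^{v^*}\cap B_\beta^{v^*}$ (arranged by iterated cofinality), gives $v^*=c(\alpha,\gamma)=c(\alpha,\beta)\ast c(\beta,\gamma)=v^*\ast v^*$. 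I then build $H=\{\alpha_\xi:\xi<\lambda\}\subseteq S$ by recursion, maintaining $c(\alpha_\eta,\alpha_\zeta)=v^*$ whenever $\eta<\zeta<\xi$. At a successor stage, any $\alpha_{\eta+1}\in B_{\alpha_\eta}^{v^*}\cap S$ above $\alpha_\eta$ works, since idempotence propagates the color back via $c(\alpha_\zeta,\alpha_{\eta+1})=c(\alpha_\zeta,\alpha_\eta)\ast c(\alpha_\eta,\alpha_{\eta+1})=v^*\ast v^*=v^*$.

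The main obstacle is the limit stage. Idempotence combined with additivity implies it is enough to produce $\alpha_\xi\in S$ above $\sup_{\eta<\xi}\alpha_\eta$ with $c(\alpha_\eta,\alpha_\xi)=v^*$ for cofinally many $\eta<\xi$; the remaining indices follow by left-multiplying by $v^*$. Fix a cofinal sequence $\langle\eta_i:i<\cf(\xi)\rangle$ in $\xi$ and, for a candidate $\beta$ above $\sup_i\alpha_{\eta_i}$, set $r_i(\beta)=c(\alpha_{\eta_i},\beta)$. The induction hypothesis gives $r_i(\beta)=v^*\ast r_j(\beta)$ for $i<j$, and idempotence collapses this to $r_0(\beta)=r_1(\beta)=\cdots$, a single value equal to $c(\alpha_{\eta_0},\beta)$. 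Since $v^*\in\Phi(\alpha_{\eta_0})$, the set of $\beta$ with $c(\alpha_{\eta_0},\beta)=v^*$ is cofinal. The delicate point, on which I expect to spend the most care, is to arrange also $\alpha_\xi\in S$; this likely requires a preliminary refinement of $S$ so that $v^*$-fibers interact coherently with $S$-membership (for instance, replacing $S$ by a subset where for $\alpha\in S$ the intersection $B_\alpha^{v^*}\cap S$ is itself cofinal), after which the pigeonhole continues to supply a suitable base point at every stage.

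For part (1), the same template applies with ``unbounded in $\lambda$'' replaced by ``dense in some nondegenerate open interval.'' Finiteness of $R$ makes $y\mapsto c(x,y)$ partition $(x,\infty)$ into finitely many pieces, so a Baire-type argument (finitely many sets covering a dense subset of an interval — at least one is dense in some subinterval) yields, for each $x$, a color $v(x)$ and an interval $I_x$ in which the $v(x)$-fiber of $x$ is dense. Two more applications of finiteness extract a single color $v^*$ and a common interval $I$ such that from a dense set of $x\in I$ the $v^*$-fiber of $x$ is dense in a subinterval of $I$. The three-point calculation again gives $v^*\ast v^*=v^*$, and $H$ is constructed greedily: enumerate a countable dense subset of $I$ and, at each step, extend by a point lying in the $v^*$-fiber of every previously chosen point — at each finite stage this constraint cuts out a dense set in $I$, so the enumeration produces $H$ dense in a subinterval of $I$.
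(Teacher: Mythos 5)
Your overall skeleton (find an idempotent color with large fibers, then recurse) is the right shape, but the proposal has a genuine gap at exactly the point where the theorem's content lies: you never actually secure an idempotent. You take $v^*$ to be whatever color the pigeonhole hands you (a color whose fiber is unbounded for unboundedly many base points) and then assert that a three-point chain witnessing $v^*\ast v^*=v^*$ can be ``arranged by iterated cofinality.'' That step fails: $B_\alpha^{v^*}$ and $B_\beta^{v^*}$ are each unbounded, but their intersection can be empty. Concretely, let $g:\lambda\rightarrow \mathbb{Z}/2\mathbb{Z}$ have both level sets unbounded and put $c(\alpha,\beta)=g(\beta)-g(\alpha)$; this is additive, both colors have unbounded fibers at every point, so your procedure may legitimately return $v^*=1$ with $S=\lambda$ --- and then $c(\alpha,\beta)=1$ forces $B_\alpha^{1}\cap B_\beta^{1}=\varnothing$, since $1+1=0$. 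The same example (with $g(x)=\lfloor x\rfloor \bmod 2$ on a separable dense order) kills the greedy construction in part (1): after two points of color $1$ the set of admissible extensions is empty, not dense. So idempotence is not a consequence of ``unbounded fiber plus pigeonhole''; the selection of $v^*$ has to build it in, and that is the real work.

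The second gap is the one you flag but do not close, and it is not a side issue --- it is the same problem in another guise. Your computation that the earlier constraints collapse onto the last one at successor stages, and the observation that at a limit stage all the values $r_i(\beta)$ coincide, are both correct and genuinely useful; but they show that the admissible extensions at a successor stage form exactly $B^{v^*}_{\alpha_\xi}$, so every chosen point must itself satisfy $v^*\in\Phi(\alpha_\xi)$, i.e.\ lie in $S$, while the admissible set at the previous stage was a fiber $B^{v^*}_{\alpha_{\xi-1}}$ that need not meet $S$ at all. Additivity only gives $\Phi(\alpha)=v^*\ast\Phi(\beta)$ for $\beta\in B_\alpha^{v^*}$, hence some $u\in\Phi(\beta)$ with $v^*\ast u=v^*$ --- not $u=v^*$. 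Repairing this requires an extra structural step before the recursion starts, e.g.\ passing to an unbounded class of the equivalence relation ``$\beta\equiv\gamma$ iff $c(\beta,\delta)=c(\gamma,\delta)$ for some (equivalently every sufficiently large) $\delta$,'' which by additivity has at most $|R|$ classes, or iteratively shrinking the unbounded set until the semigroup of colors realized on it stabilizes, at which point an idempotent whose ``good points'' are self-sustaining can be extracted. As written, both parts stop just short of this step; in addition, part (1) tacitly assumes a common interval $I$ can be extracted from the varying intervals $I_x$ and that the order is separable, neither of which is addressed.
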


Second, the true first-order arithmetic is interpreted under the Continuum
Hypothesis in the monadic theory of the real line and in the monadic theory
of order.

There are more results and many conjectures.  The introduction surveys
preceding results.
\bigskip

\noindent
(B43) \quad \underline{Generalized quantifiers and compact logic}, Trans. 
AMS 204(1975), 342-364.

We show the existence of a logic stronger than first-order logic even for
countable models, but still satisfying the general compactness theorem,
assuming e.g. the existence of a weakly compact cardinal.  We also discuss
several kinds of generalized quantifiers.

We prove the compactness of $\bbL(Q^{\cf}_{C_1},\dotsc,
Q^{\cf}_{C_n})$ when $C_n$ is a convex class of cardinals,
$Q^{\cf}_C$ say the cofinality of an order belongs to $C$, and
axiomatize $L(Q^{\text{cf}}_{C_1})$.  We also prove that 
$\aleph_0$-compactness of $(Q^{\mathrm{stat}}_{\aleph_1}x)$ (being stationary
for order of cofinality $\aleph_1$) and transfer theorems.

We define generalized second-order quantifiers: allowing in forming formulas
the use of second-order variables, and second-order quantifiers 
$(QP)\psi(P)$, such that its satisfaction depends only on the 
isomorphism type of $\{P :\ \models \psi[P]\}$.  In particular, 
stationary logic (that is, the quantifier $(\mathrm{aa}X)$) is introduced 
(developed subsequently by Barwise, Kaufman and Makkai, \cite{BKM78}) and we
note we could have used diamonds instead of a weakly compact 
cardinal in the main theorem.
\bigskip

\noindent
(B44) \quad \underline{Infinite abelian groups, Whitehead problem 
and some constructions}, IJM, 18(1974), 243-256.

We deal with three problems: the existence of indecomposable and even rigid
(abelian) groups, the number of reduced separable $p$-groups and Whitehead
problem.

We prove that for \underline{any} cardinal $\lambda > \aleph_0$ there are
$2^\lambda$ (abelian) groups $G_i$ ($i < 2^\lambda$) such that any homomorphism
$h:G_i \rightarrow G_j$ is zero \oor \, $i = j$, $h(x) = nx$.  We do it by
coding deepness of well-founded trees and stationary sets.

For regular $\lambda > \aleph_0$ we build a family of $2^\lambda$ reduced
separable $p$-groups, each of power $\lambda$, no one embeddable into 
another,
by imitating the proof for non-superstable theories (see [1], Ch.VIII and
\cite{Sh:e} for singular $\lambda$).  The proof indicates that reduced 
separable $p$-groups cannot be characterized by reasonable invariants.

An abelian group $G$ is $W$ (= Whitehead) if $\Ext(G,\bbZ) = 0$, or
equivalently, when: if $\bbZ \subseteq H$ ($\bbZ$ the integers), 
$h:H \rightarrow G$ a homomorphism onto $G$ with kernel $\bbZ$, 
then for some homomorphism $g:G \rightarrow H$ we have $hg = 1_G$.  
It is proved that ``every Whitehead 
group of power $\aleph_1$ is free", is independent of $\ZFC$: it 
follows from 
$\bfV=\bfL$ (or even $\diamondsuit_S$ for every stationary $S \subseteq \omega_1$) 
and is contradicted by $\MA + 2^{\aleph_0} > \aleph_1$.  We partition the
$\aleph_1$-free groups of power $\aleph_1$ to three classes: I (``unstable
in $\aleph_0$"), II (intermediate) and III (the free ones).  Class II is 
similar to ``strongly $\aleph_1$-free but not free" but somewhat wider.

For latter use let $\Gamma(G) = \{i:G/G_i$ is not $|G|$-free$\}$, where
$$G = \bigcup\limits_{i < |G|} G_i,$$ $G_i$ increasing 
continuums, $|G_i| < G$, and $G$ is $|G|$-free ($\Gamma(G)$ is determined modulo the filter of closed unbounded subsets of $|G|$).
\bigskip

\noindent
(B45) \quad \underline{Existence of rigid-like families 
of abelian $p$-groups, model theory}\\ \underline{and algebra}, lecture notes in 
Math. 498, Springer-Verlag, 1975, 385-402.

We prove that for arbitrarily large $\lambda$ there is a 
family of $2^\lambda$
(abelian) separable $p$-groups, each of power $\lambda$ with only the
necessary homomorphism (called simple) between them.  A homomorphism $h:G
\rightarrow H$ is zero-like if there are no $m < \omega$ and $a_n \in G$ of
order $p^{m+m}$, $h(a)$ having order $\ge p^n$; we call $h$ simple if
$h = h_1 + h_2$, $h_1$ zero-like, $h_2$ a multiplication by a $p$-adic integer
(if $H \ne G,h_2 = 0$).

We prove this for $\lambda$ strong limit of 
uncountable cofinality; the method
is that we can for each strong limit $\mu < \lambda$, $\cf(\mu) = \aleph_0$,
diagonalize over all approximation of a homomorphism; we can do it as
$2^\mu = \mu^{\aleph_0}$.

For $\lambda^{\aleph_0} = 2^\lambda > 2^{\aleph_0}$ we prove a weaker
theorem
\mn
\begin{enumerate}
\item[$(*)$]  i.e. $\forall x \bigl[\bigvee\limits_{n < \omega} p^nx = 
0 \bigr]$, and the group has no divisible subgroup $\ne 0$.
\end{enumerate}
\bigskip

\noindent
(B46) \quad \underline{Colouring without triangles and 
partition relation}, IJM 20(1975), 1-12.

We say the \underline{colouring} $h$ of $\lambda$ proves $\lambda \rightarrow
[\mu]^2_\chi$ if $h$ is a function from the edges of the complete graph on
$\lambda$ into $\chi$ such that whenever $S \subseteq \lambda$, $|S| = \mu$, 
then the range of $h$ restricted to the edges in $S$ is the whole of $\chi$.

Erd\H{o}s and Hajnal (Problem 68) asked if it is true (assuming CH) that
for every colouring which witness $\aleph_1 \rightarrow [\aleph_1]^2_3$ there
is a triangle whose edges have three colours.  We answer this negatively
and prove the stronger result (Theorem 1.1) that, if
$2^{\aleph_\alpha} = \aleph_{\alpha +1}$, then there is a colouring which
witness $\aleph_{\alpha +1} \rightarrow 
[\aleph_{\alpha +1}]^2_{\aleph_\alpha}$and for which there 
is no triangle with three colours.  We also show that, if $\bfV = \bfL$, and 
$\aleph_\alpha$ is regular then there is even such a colouring
which witness $\aleph_{\alpha +1} \rightarrow [\aleph_{\alpha +1}]^2
_{\aleph_{\alpha +1}}$.

In \S2, we deal with the corresponding finite problem and confirm a 
conjecture of Erd\H{o}s.  We prove, for example, that if the edges of the
complete graph on $n$ points are colored with three colours so that no 
triangle has three colours, then there is a set $A \subseteq n$, such that
$|A| \ge n^c$ ($c$ some constant) and which contains only two colours.  This
should be compared with the result $n \nrightarrow [c_1 \log(n),n]^2_3$
proved by Erd\H{o}s by a probabilistic method.

In \S3, we show that examples for 1.1 should contain quite 
homogeneous sets; and then introduce new partition relations, (of interest
in the finite and infinite case), give some results and suggest some
problems.

It was proved by Erd\H{o}s and Hajnal that any example of
$\aleph_1 \rightarrow \bigl[ [\aleph_1,\aleph_0] \bigr]^2_3$ is
$\aleph_0$-universal.  Erd\H{o}s and Hajnal asked whether every example
of $\aleph_2 \rightarrow \bigl[ [\aleph_2,\aleph_1] \bigr]^2_3$ is
$\aleph_1$-universal.  We prove the consistency of the negative answer and
the method is applicable in many other cases.
\bigskip

\noindent
(B47) \quad \underline{$\Delta$-Logics and generalized 
quantifiers}, AML 10(1976), 155-192 (with J.A. Makowsky and Y. Stavi).

We study the $\delta$-closure of an abstract logic, which is its smallest
extension satisfying the $\Delta$-interpolation theorem or, equivalently,
its largest extension having the same $PC$-classes.

We study properties which are preserved by the $\Delta$-closure such as
compactness, and various Lowenheim and Hauf numbers.

We apply these to exhibit new logics with various properties, and we study
$\Delta$-closed fragments of $\bbL_{\omega_1,\omega}$ in great detail.
\bigskip

\noindent
(B48) \quad \underline{Categoricity in $\aleph_1$ of 
sentences in $\bbL_{\omega_1,\omega}(Q)$}, IJM 20(1975), 127-148.

We investigate the possible numbers of models of a sentence 
$\psi \in \bbL_{\omega_1,\omega}(Q)$ of cardinality $\aleph_1$.

Assume $2^{\aleph_0} < 2^{\aleph_1}$ and $1 \le I(\aleph_1,\psi) <
2^{\aleph_1}$, it is proved that there exists a class $K$ of models which
are the atomic models of some first-order countable theory $T_\psi$ such
that $\forall \lambda[I(\lambda,K) \le I(\lambda,\psi)]$ and 
$I(\aleph_1,K) \ge 1$.

Under the same assumptions it is proved that $K$ is $\aleph_0$-stable.
\smallskip

\noindent
The Main Theorem is:  assuming $\diamondsuit_{\aleph_1}$ if $1 \le 
I(\aleph_1,\psi) < 2^{\aleph_1}$, then there exists a model of $\psi$ 
of cardinality $\aleph_2$.  This is proved by showing:
\mn
\begin{itemize}
\item  if the $\aleph_0$-amalgamation property fails for $K$, then using
$\diamondsuit_{\aleph_1}$ we can construct $2^{\aleph_1}$ non-isomorphic
models in $K$ of cardinality $\aleph_1$.
\sn
\item  The $\aleph_0$-amalgamation property is equivalent to a suitable rank
being always $< \infty$ (and some other properties).
\sn
\item  Assuming the rank is $< \infty$, we prove that the order property
(= there are $\aleph_1$ sequences in a model of $\psi$, ordered by some
formula) is equivalent to two variants of the symmetry property (one of
them: 
\newline
$R(\tp(\bar a,M \cup \bar b)) = R(\tp(\bar a,M))$ \Iff \,
$R(\tp(\bar b,M,\bar a)) = R(\tp(\bar b,M))$ essentially).
The second asymmetry property, enables us to prove 
$I(\aleph_1,K) = 2^{\aleph_1}$ by coding stationary subsets of $\omega_1$.
But the holding of the symmetry property enable us to properly extend every
model of power $\aleph_1$.  Thus by iteration, we get a model of power
$\aleph_2$.
\end{itemize}
\bigskip

\begin{remark}
1) We continue this work on $\bbL_{\omega_1,\omega}$ sentences
in [Sh:87a], [Sh:87b] where we improve some of the theorems from this paper 
and some of the claims here are preparation for them and also 
in \cite{Sh:88}.

\noindent
2) The main difficulty is in 3).
\end{remark}
\bigskip

\noindent
(B49) \quad \underline{A two-cardinal theorem and a combinatorial 
theorem}, Proc. AMS 62(1977), 134-136.

We prove a new two-cardinal theorem, e.g.
$(\aleph_\omega,\aleph_0) \rightarrow (2^{\aleph_0},\aleph_0)$.

For this we prove a new partition theorem, with ``tree indiscernibility".

The problem seemed to have been the right induction.  Also compactness is
gotten.
\begin{conjecture}\label{b49}
A) $(\aleph_{\alpha + \omega + \omega},
\aleph_{\alpha + \omega},\aleph_\alpha) \rightarrow (\lambda,\mu,\chi)$
whenever $\chi < \mu < \lambda < \mathrm{Ded}^* \chi$. 

\noindent
B)  If a countable theory $T$ has a $\lambda$-like model, $\lambda$ a limit
cardinal, and $|T| \le \mu < \lambda_1 \Ded^*(\mu)$, $\lambda_1$ a
singular cardinal, then $T$ has a $\lambda_1$-like model.  If $\lambda$ is
$\omega$-Mahlo weakly inaccessible cardinal, we can remove the cardinal, we
can remove the singularity of $\lambda_1$. 

\noindent
C) If $\psi \in L_{\omega_1,\omega}$ has a model of cardinality 
$\aleph_{\omega_1}$, then $\psi$ has a model of cardinality $2^{\aleph_0}$
(but see [Sh:522]).
\end{conjecture}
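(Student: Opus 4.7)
The plan is to treat (A), (B), (C) in sequence, with (A) supplying the combinatorial engine that the other two exploit.

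For (A), I would build on the tree-indiscernibility partition theorem that powered the proof of $(\aleph_\omega,\aleph_0)\to(2^{\aleph_0},\aleph_0)$ in this paper. The first step is to lift that result by one nesting level: prove that every colouring of finite sequences from $\aleph_{\alpha+\omega+\omega}$ admits a ``doubly tree-indiscernible'' subset carrying three distinguished depth parameters tied to the three cardinals. The central technical point is the right induction — a double induction, outer on the $\omega$-many blocks inside $\aleph_{\alpha+\omega+\omega}$ and inner on the $\omega$-many blocks inside each $\aleph_{\alpha+\omega}$-chunk, each step feeding into the next via the stabilization lemma underlying B49. Once the skeleton is in place, the model-theoretic content is an Ehrenfeucht--Mostowski style stretching: extract a template from any $(\aleph_{\alpha+\omega+\omega},\aleph_{\alpha+\omega},\aleph_\alpha)$-model and realize it on an arbitrary triple $(\lambda,\mu,\chi)$ with $\chi<\mu<\lambda<\Ded^*\chi$, the $\Ded^*\chi$ bound controlling how many types over the $\chi$-part can simultaneously appear.

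For (B), I would apply (A) to a $\lambda$-like model by choosing three increasing cardinals $\chi_0<\mu_0<\nu_0$ realized as cardinalities of proper initial segments of the $\lambda$-like order, stretching by (A) to a desired triple, and then splicing a $\cf(\lambda_1)$-cofinal chain of such stretched blocks so that the resulting order is $\lambda_1$-like. Singularity of $\lambda_1$ is exactly what lets each proper initial segment stay below $\lambda_1$ after splicing. The $\omega$-Mahlo strengthening is where one would trade the singularity hypothesis for a reflection argument supplied by Mahloness, using inaccessible reflection points to seed the blocks and thereby avoid cofinality restrictions on $\lambda_1$.

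For (C), the main obstacle is precisely the gap between $\aleph_{\omega_1}$ and $2^{\aleph_0}$: naive downward L\"owenheim--Skolem in $\bbL_{\omega_1,\omega}$ yields only $\aleph_1$. My plan would be to pass to a countable fragment containing $\psi$, Skolemize, use the $\aleph_{\omega_1}$-model to force enough tree-indiscernibility at that level, and then descend via (A) to $2^{\aleph_0}$ while omitting the countably many types encoding the syntactic content of $\psi$. The hardest part will be the omitting step — certifying that the stretching preserves non-realization of those types through a $2^{\aleph_0}$-sized target where the natural cardinal arithmetic no longer favours us — which is presumably why the author lists (C) only as a conjecture and points to \cite{Sh:522} for the eventual resolution.
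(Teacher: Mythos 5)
You should first be clear that the statement you are proving is presented in the paper as a \emph{conjecture}, not a theorem: B49 itself proves only the two-cardinal transfer $(\aleph_\omega,\aleph_0)\rightarrow(2^{\aleph_0},\aleph_0)$ via a partition theorem with tree indiscernibility, and then \emph{poses} (A), (B), (C) as open problems. There is no proof in the paper to compare yours against, and your text does not supply one either: it is a programme. The load-bearing step in your treatment of (A) --- the ``doubly tree-indiscernible'' partition theorem for colourings of finite sequences from $\aleph_{\alpha+\omega+\omega}$ with three depth parameters --- is exactly the open combinatorial content of the three-cardinal problem. Saying that ``the central technical point is the right induction'' and describing it as a double induction over blocks does not produce that induction; the known argument for the two-cardinal case does not iterate in the obvious way (this is precisely the classical difficulty with gap-$n$ and three-cardinal transfer theorems), and nothing in your sketch explains how the stabilization underlying the two-cardinal proof survives the second level of nesting, nor how the bound $\lambda<\Ded^*\chi$ enters the combinatorics rather than merely the type-counting at the end. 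Since (B) and (C) are routed through (A), they inherit this gap in full.

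Two further concrete problems. In (B), splicing a $\cf(\lambda_1)$-cofinal chain of stretched blocks does not obviously yield a single \emph{model of $T$} whose distinguished order is $\lambda_1$-like: the blocks produced by separate applications of an Ehrenfeucht--Mostowski-style stretching must cohere into one elementary structure, and you give no amalgamation or union-of-chains argument for that; the sentence about $\omega$-Mahlo reflection is a restatement of the desired strengthening, not an argument. In (C), you yourself concede that the omitting-types step through a target of size $2^{\aleph_0}$ is not carried out; note also that the paper's parenthetical ``(but see [Sh:522])'' is a warning that the conjecture as literally stated is delicate, so any purported proof would first have to address whether the statement is even correct in the form given.
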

\bigskip

\noindent
(B50) \quad \underline{Decomposing uncountable squares to 
countably many chains}, J. Comb. Th. A. 21(1976), 110-114.

We construct an ordered set $I$ of cardinality $\aleph_1$, such that its
square is the union of $\aleph_0$ chains (in the natural partial order).
This gives a complete order, isomorphic to any open interval of itself but
not to its universe.  This is proved by constructing a ``very special"
Aronszajn tree.
\begin{conjecture}\label{b50}
It is consistent that:
\mn
\begin{itemize}
\item  any Specker order contains a suborder as above
\sn
\item  if $I,J$ are as above, $I,J$ or $I,J^*$ have uncountable isomorphic 
suborders. 
\end{itemize}
\mn
The consistency of 2) is proved in \cite{Sh:114}.

As for (1) it is equivalent to:
\mn
\begin{enumerate}
\item[$(*)$]  if $T$ is an Aronszajn tree, $c:T \rightarrow 2$ then for
some uncountable $X \subseteq T$ and $\ell \ne \{0,1\}$ for every $x \ne y$
in $X$, the maximal $z \le_T x,y$ satisfies $c(z) = \ell$.
\end{enumerate}
\end{conjecture}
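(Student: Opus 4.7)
The plan is to prove the stated equivalence $(1) \Leftrightarrow (*)$ and then outline a consistency proof for $(*)$ via iterated proper forcing; clause (2) is handled in \cite{Sh:114}, so the new work is on (1). For the equivalence, recall that every Specker order admits an uncountable suborder isomorphic to a lexicographic ordering of branches through an Aronszajn tree $T$, and that the ``very special'' Aronszajn tree constructed earlier in B50 yields exactly such a suborder with countable-chain-decomposable square. Given a 2-coloring $c\colon T \to 2$ and an uncountable $X \subseteq T$ whose pairwise meets all have color $\ell$, the corresponding lex-ordered suborder $I$ has $I \times I = \bigcup_{n < \omega} C_n$, with each $C_n$ collecting those pairs $(x,y)$ whose meet lies on a prescribed cofinal $\omega$-sequence of levels of $T$; conversely, any countable-chain decomposition of $I \times I$ assigns to each splitting node of $T$ a label derived from the chain index, yielding a 2-coloring of $T$ together with an uncountable meet-monochromatic subset.

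For consistency of $(*)$, I would run a countable-support iteration $\langle P_\alpha, Q_\alpha : \alpha < \omega_2 \rangle$ over a ground model of CH. Bookkeeping enumerates all pairs $(T,c)$, with $T$ an Aronszajn tree and $c\colon T \to 2$, appearing in intermediate extensions. At stage $\alpha$, conditions in $Q_\alpha$ are pairs $(X, \ell)$ with $X \subseteq T_\alpha$ finite, $\ell \in \{0,1\}$, and every two distinct elements of $X$ having meet of $c_\alpha$-color $\ell$; the order is reverse inclusion of the first coordinate with matching $\ell$. A genericity argument adjoins an uncountable $X_\alpha$ witnessing $(*)$ for $(T_\alpha, c_\alpha)$.

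The main obstacle is preservation: each $Q_\alpha$ must be proper, must not specialize any previously considered Aronszajn tree, and must not adjoin a cofinal branch through such a tree, and these properties must survive the countable-support iteration. Properness of $Q_\alpha$ reduces to a standard amalgamation argument over countable elementary submodels, using that at least one of the two colors must remain locally unbounded in $T_\alpha$. Preserving Aronszajn trees is the delicate step; the plan is to invoke the preservation theorems for countable-support iterations of proper forcings that add no cofinal branches through $\omega_1$-trees, after verifying the corresponding local property for $Q_\alpha$. I expect the bulk of the technical effort to lie here, and it may require strengthening $Q_\alpha$ with PFA-style side conditions to obtain a clean preservation lemma, after which the equivalence gives clause (1) in the final model.
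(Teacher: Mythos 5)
The paper gives you nothing to match against here: B50's statement is explicitly a \emph{conjecture} in a collection of abstracts, clause (2) is deferred to \cite{Sh:114}, and the equivalence of clause (1) with $(*)$ is asserted without argument. So your proposal must stand on its own, and it has two genuine gaps. The first is in the direction $(*)\Rightarrow(1)$: taking $C_n$ to be the set of pairs $(x,y)$ whose meet lies in the $n$-th piece of some countable partition of the levels of $T$ does not yield chains in $I\times I$. If $(x,y)$ and $(x',y')$ have the \emph{same} meet $m$, then $x,x'$ pass to the same side of $m$ and $y,y'$ to the other, and the relative order of $x,x'$ is decided strictly above $m$ independently of that of $y,y'$, so the two pairs need not be comparable in the product order. (Also, no $\omega$-sequence of levels of an Aronszajn tree is cofinal, the levels being indexed by $\omega_1$.) More tellingly, your decomposition never uses the color $c$ or the monochromaticity of the meets of $X$, so the hypothesis $(*)$ plays no role in it; the actual mechanism linking meet-monochromatic sets to countable-chain decompositions of the square --- which in all known constructions is carried by much finer data attached to the meet than its level, e.g.\ coherence of the tree --- is precisely the missing idea. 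Your sketch of the converse also runs in the wrong logical direction: to get $(*)$ from (1) you must start from an arbitrary pair $(T,c)$, build a Specker order from it, and extract a meet-monochromatic set from a Countryman suborder, not start from a chain decomposition and produce a $2$-coloring.

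The second gap is in the forcing. Properness of the poset of finite meet-monochromatic subsets of an arbitrary Aronszajn tree, for at least one of the two colors, is not a ``standard amalgamation argument over countable elementary submodels'': for a general $(T,c)$ neither color need obviously admit such amalgamations, deciding which $\ell$ to force with is itself part of the problem, and this is exactly where the entire difficulty of statements in this family of Aronszajn-tree partition relations lives (their known consistency proofs, via PFA-style arguments, require elementary-submodel side conditions and delicate analysis, not a routine density argument that the generic $X_\alpha$ is uncountable). Your closing remark that side conditions ``may be required'' concedes this; as written, both essential steps of the proposal --- the combinatorial equivalence and the properness/genericity of $Q_\alpha$ --- remain unproved.
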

\bigskip

\noindent
(B51) \quad \underline{Why there are many non-isomorphic models 
for unsuperstable theories},\\ Proc. of the International Congress 
of Math. Vancouver (1974), 553-557.

We review the proof (see \cite{Sh:a}) that for unsuperstable $T$, 
$T \subseteq T_1$, $I(\lambda,T_1,T) = 2^\lambda$ when e.g. $\lambda > |T_1|$
\underline{or} $\lambda = |T_1| + \aleph_1$, $T$ unsuperstable.  To exemplify 
the generality of the method, we use it to prove that in any uncountable cardinality 
$\lambda$ there is a rigid Boolean algebra (for $\lambda$ regular satisfying 
the countable chain condition), and a rigid order (with a rigid completion).
\bigskip

\noindent
(B52) \quad \underline{A compactness theorem in singular 
cardinals, free algebras, Whitehead}\\ \underline{problem and transversals}, 
IJM 21(1975), 319-349.

We prove in an axiomatic way a compactness theorem for singular cardinals,
concluding that if $\bfV = \bfL$ every Whitehead group is free.  For this we use 
index sets of submodels $\langle M_u:u \in \clP(n) \rangle$ and 
prove various assertion by induction on the cardinality for all $n$ at once.

There is an application for transversals, i.e. if $\kappa < \lambda$, $\lambda$
singular $\{A_i: < \lambda\}$ a family of $\lambda$ sets each of power
$\le \kappa$, and every subfamily of smaller cardinality has a transversal
then the family has a transversal.

Similar conclusions hold for freeness of groups, freeness abelian groups
and colouring numbers.  

The general theorem:
\mn
\begin{enumerate}
\item[$(*)$]  ``if $V$ is a variety with $< \lambda$ functions, $A \in V$
an algebra of power $\lambda$, and every (or ``many") subalgebras of $A$ of
cardinality $< \lambda$ are free, then $A$ is free"
\end{enumerate}
\mn
and does not follow by the axiomatic treatment.  
However, we develop here some
filters on $S^\delta_\kappa(A) = \{\langle A_i:i < \delta \rangle:A_i$ a
subset of $A$ of power $\le \kappa$, $A_i$ increasing continuous$\}$, using
those we prove $(*)$.

Further work on the situation in regular cardinals is in \cite{Sh:161}, and
the author eliminates $Ax I^*,V$ from the axiomatic system (see \cite{Sh:266}, Hodges \cite{Ho81}).
\bigskip

\begin{theorem} \label{b52.1}
For a $\lambda$ singular, $\chi^* < \lambda$ assume
$\bfF$ is a set of pairs $(A,B),(\bfF$ for free) 
$A,B \subseteq \lambda$ satisfying the axioms (II-IV,VI,VII) below.  
Let $A^*,B^* \subseteq \lambda$, then $B^*/A^* \in \bfF$ 
if $B^*/A^*$ is $\lambda$-free in a weak sense
which means (see Definition below):
\mn
\begin{enumerate}
\item[$(*)_0$]   for the $D_{\chi^*}(B^*)$-majority of $B \in [B^*]
^{< \lambda}$ we have $B/A^* \in \bfF$ or just
\sn
\item[$(*)_1$]  for a club of $\mu < \lambda$ we have 
$$\big\{\mu < \lambda : \{B \in [B^*]^\mu:B/A^* \in \bfF\} \in E^{\mu^+}_\mu(B^*)\big\}$$ 
contains a club of $\lambda$ all $B \in [B^*]^\mu$ (see below), $B/A^* \in
\bfF$
\end{enumerate}
\mn
or at least
\mn
\begin{enumerate}
\item[$(*)_2$]  for some set $C$ of cardinals $< \lambda$, unbounded in
$\lambda$ and closed (meaningful only if cf$(\lambda) > \aleph_0$) for
a $E^{\mu^+}_\mu(B^*)$-positive set of $B \in [B^*]^\mu$, $B/A^* \in \bfF$.
\end{enumerate}
\end{theorem}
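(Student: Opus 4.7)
\medskip
\noindent
\textbf{Proof proposal.} The plan is to construct a free filtration of $B^*$ over $A^*$ of length $\cf(\lambda)$, with each piece in $\bfF$, and then apply a continuity/transitivity axiom from the axiom list to conclude $B^*/A^* \in \bfF$. This reduces singular-cardinal freeness to an inductive verification at cardinalities below $\lambda$, which is precisely the content of the singular cardinal compactness theorem.

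Set $\kappa = \cf(\lambda)$ and choose a strictly increasing sequence $\langle \mu_\alpha : \alpha < \kappa \rangle$ of regular cardinals with $\chi^* + \kappa < \mu_0$ and $\sup_\alpha \mu_\alpha = \lambda$; enumerate $B^* = \{b_\xi : \xi < \lambda\}$. I would build an increasing continuous chain $\langle B_\alpha : \alpha < \kappa \rangle$ with $A^* \subseteq B_\alpha \subseteq B^*$, $|B_\alpha| \le \mu_\alpha$, $B^* = \bigcup_{\alpha < \kappa} B_\alpha$, and $B_\alpha/A^* \in \bfF$ for every $\alpha$. At a successor stage $\alpha+1$, the hypothesis $(*)_2$ (the weakest of the three alternatives, implied by both $(*)_0$ and $(*)_1$) supplies an $E^{\mu_{\alpha+1}^+}_{\mu_{\alpha+1}}(B^*)$-positive family of $B \in [B^*]^{\mu_{\alpha+1}}$ with $B/A^* \in \bfF$; intersecting this positive class with the filter conditions $B \supseteq B_\alpha$ and ``$B$ contains a prescribed bookkeeping initial segment of the $b_\xi$'s'' must leave a nonempty class, and any member serves as $B_{\alpha+1}$. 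At a limit $\delta$ one invokes the continuity axiom on the chain $\langle B_\alpha : \alpha < \delta \rangle$ to obtain $B_\delta/A^* \in \bfF$. A final continuity invocation over the full chain of length $\kappa$ then yields $B^*/A^* \in \bfF$.

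The main obstacle, where the specific axioms II--IV, VI, VII must do real work, is the successor step: one needs both that the set $\{B : B \supseteq B_\alpha\}$ is compatible with (or better, lies in) the filter $E^{\mu_{\alpha+1}^+}_{\mu_{\alpha+1}}(B^*)$, and that the freeness predicate $B/A^* \in \bfF$ is sufficiently robust under such restrictions. A parallel difficulty arises at limits $\delta$ with small cofinality (say $\cf(\delta) \le \chi^*$), where continuity may not apply off the shelf and one must combine the $E$-filters across levels with a Fodor-style reflection to transport freeness past the limit. This is presumably the point where the $\clP(n)$-indexed cube of submodels alluded to in the summary enters: the assertion ``$B_\alpha/A^* \in \bfF$'' is upgraded to a coherent $n$-dimensional commuting diagram of freeness data, and it is this higher-dimensional coherence, preserved under taking unions, that allows the continuity step to be executed in its most delicate form. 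The induction should therefore be carried out simultaneously for all $n$ and all $\lambda$ of a given cofinality, precisely as indicated in the abstract of the paper.
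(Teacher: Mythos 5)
Your plan founders at its central step. Axiom IV (the only continuity axiom available) requires $A_{i+1}/A_i \in \bfF$ for the \emph{successive quotients} of the chain, not merely $A_i/A_0 \in \bfF$. Your construction arranges $B_\alpha/A^* \in \bfF$ for each $\alpha$, but at the successor stage you choose $B_{\alpha+1}$ only from the positive class of $B \in [B^*]^{\mu_{\alpha+1}}$ with $B/A^* \in \bfF$ and $B \supseteq B_\alpha$; nothing in Ax II--IV, VI, VII lets you deduce $B_{\alpha+1}/B_\alpha \in \bfF$ from $B_\alpha/A^* \in \bfF$ together with $B_{\alpha+1}/A^* \in \bfF$. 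There is no cancellation axiom, and for the intended interpretations (freeness of groups and abelian groups, transversals, colouring numbers) such cancellation is simply false. So the final invocation of Ax IV has no hypothesis to stand on. This is precisely the classical obstruction that makes singular compactness nontrivial, and it is a successor-stage problem, not --- as you suggest at the end --- a difficulty confined to limit stages of small cofinality.

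The tool for repairing it is Ax VII: if $A/B \in \bfF$ then for the $D_{\chi^*}$-majority of $A' \subseteq A$ we have $A/(B \cup A') \in \bfF$. To apply it you would need $B_\alpha$ to sit in the majority position \emph{inside} $B_{\alpha+1}$; but $B_{\alpha+1}$ has not been chosen when $B_\alpha$ is, and once $B_{\alpha+1}$ is fixed the majority filter on its subsets need not contain the particular set $B_\alpha$. Escaping this circularity is exactly what forces the $\clP(n)$-indexed systems of submodels $\langle M_u : u \in \clP(n)\rangle$ mentioned in the abstract: one proves, by induction on the cardinality simultaneously for all $n$, a statement about coherent $n$-dimensional arrays of sets each of which is in the appropriate majority position relative to the others (via Ax VI and VII), and only such arrays can be refined into chains whose successive quotients lie in $\bfF$, after which Ax III and IV finish. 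Your proposal gestures at this structure but locates it in the wrong place and does not formulate the strengthened inductive statement that makes it work. (Note also that the present paper states the theorem without proof; the argument itself is in the original Sh:52, with the later eliminations of axioms in Sh:266 and in Hodges.)
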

\bigskip

\noindent
The axioms are:

\noindent
\underline{Ax II}:  
$B/A \in \bfF \Leftrightarrow A \cup B/A \in \bfF$.
\medskip

\noindent
\underline{Ax III}:  if $A \subseteq B \subseteq C$, $B/A \in \bfF$, 
$C/B \in \bfF$ then $C/A \in \bfF$.
\medskip

\noindent
\underline{Ax IV}:  if $A_i$ is increasingly continuous for $i \le \theta =
\cf(\theta),A_{i+1}/A_i \in \bfF$ then $A_\theta/A_0 \in \bfF$.
\medskip

\noindent
\underline{Ax VI}: if $A/B \in \bfF$ then for the $D_{\chi^*}$-majority of
$A' \subseteq A$, $A'/B \in \bfF$.
\medskip

\noindent
\underline{Ax VII}:  if $A/B \in \bfF$ then for the 
$D_{\chi^*}$-majority of $A' \subseteq A$, $A/B \cup A' \in \bfF$.
\bigskip

\begin{definition}\label{b52.2}
1) Let for $D$ a function giving for any set $B^*$
a filter $D(B^*)$ on ${\clP}(B^*)$ (or on $[B^*]^\mu$), then to say for
the $D$-majority of $B \subseteq B^*$ (or $B \in Y^*$) we have $\varphi(B)$
means $\{B \subseteq B^*:\varphi(B)\} \in D(B^*)$ or $\{B:B \in Y^*,\neg
\varphi(B)\} = \varnothing \mod D$. 

\noindent
2)  Let $D_\mu(B^*)$ be $\{Y:Y \subseteq {\clP}(B^*)$ such that for some
algebra $M$ with universe $B^*$ and $\le \mu$ functions, $Y \supseteq \{B
\subseteq B^*:B \ne \varnothing$ closed under the functions of $M\}$.

\begin{equation*}
\begin{array}{clcr}
E^{\mu^+}_\mu(B^*) = \bigl\{ Y \subseteq [B^*]^\mu:&\text{for some }
\chi,\ x \text{ such that } \{B^*,x\} \in {\cH}(\chi)  \\
  &\text{and if } \bar M = \langle M_i:i < \mu^+ \rangle \text{ is an
increasingly} \\
  &\text{continuous sequence of elementary submodels of} \\
  &({\cH}(\chi),\in) \text{ such that } x \in M_0 \text{ and} \\
  &\bar M \rest (i+1) \in M_{i+1} \text{ then for some club } C
\text{ of } \mu^+, \\
  &i \in C \Rightarrow M_i \cap B^* \in Y \bigr\}
\end{array}
\end{equation*}

\mn
(see \cite{Sh:52}, \cite{BD}). Now the proof in \cite{Sh:52} of singular compactness 
used two 
axioms which the author eliminated later; 
see \cite{Sh:266}, 
Hodges \cite{Ho81}. 

There are some cases of incompactness (\cite{Sh:267}, \cite{Sh:347}).
\end{definition}
\bigskip

\noindent
(B53) \quad \underline{Models with few non-isomorphic expansions}, 
IJM 28(1971), 331-338 (with A. Litman).

If $M$ is a countable model 
(with countable language) the number of expansions
of $M$ by one one-place predicate is, up to isomorphism, $\aleph_0$ or
$2^{\aleph_0}$.  Moreover, we characterize when the number is $\aleph_0$:
when $M$ is definable in $N \times \omega$ for some finite model $N$.
\bigskip

\noindent
(B54) \quad \underline{The lazy model theorist's guide to stability,
six days of model theory},\\ Proc. of a conference in Louvain-la-Neuve, 
March 1975, edited by Paul Henramd, Pub. by Paul Castella, Switzerland,
1661 Albeuve (1978), 9-76.

This paper is an introduction to classification theory and an outline of the
main results updated to 1975.  The main goal is to convince the reader that
stability is useful for algebraic theories, and though it was developed for
elementary classes, it can be adapted to others, mainly:
\mn
\begin{itemize}
\item  the existentially closed models of a universal theory with the
joint embedding property and the amalgamation property
\sn
\item  the existentially closed models of a universal theory with the JEP.
\end{itemize}
\mn
We assume knowledge of only basic model theory (as in the book of Chang and
Keisler); stability is defined and some of the harder combinatorial theorems
are just quoted and appear with an outline of proof.

Morley's categoricity theorem is proved, uniqueness of prime model for
stable countable theory, characterization of stability using rank, the basic
properties of non-forking (sometimes with simpler proofs than in the book
\cite{Sh:a}).  We prove that for a ring $R$, if $K$ is the class of 
$R$-modules then either every $R$-module $M$ is the direct sum of modules
of $\le |R| + \aleph_0$, \oor \, in every $\lambda > \aleph_0$ there are
$2^\lambda$ non-isomorphic $R$-modules; the first case occurs iff the
theory is superstable.  Application to group theory (chain condition) and
field are discussed.

It is noted (with Macintyre) that by \cite{Sh:39}, some theories of separably
closed field of characteristic $p$ are stable (ie. the reducts of the theory
of differentially closed fields of characteristic $p$), and by a similar 
proof, all such theories are stable.

Since this paper is very condensed and is itself a summary of results, we
could not mention in this summary all the theorems.  We gave just a 
representative sample list.
\bigskip

\noindent
(B55) \quad \underline{Universal locally finite groups}, 
J. of Alg. 43(1976), 168-175 (with Macintyre).

Call $G$ a u.f.l. (universal locally finite) group if:
\mn
\begin{enumerate}
\item[(a)]   every finitely generated subgroup is finite
\sn
\item[(b)]  every finite group can be embedded into $G$
\sn
\item[(c)]  every isomorphism by one finitely generated subgroup to
another is induced by an inner automorphism.
\end{enumerate}
\mn
We prove that for $\lambda > \aleph_0$, there are $2^\lambda$ u.l.f. 
non-isomorphic groups, no one embeddable into another (using \cite{Sh:16}).
\bigskip

\noindent
(B56) \quad \underline{Refuting Ehrenfeucht Conjecture on Rigid models}, 
IJM 25(1976), 273-288.

We prove that the class of cardinalities in which a first-order sentence has
a rigid model can be very complicated.  In fact, we essentially characterized
those classes: if GCH holds any $\Sigma^1_2$ class (of pure second order
logic) occurs.
\bigskip

\noindent
(B57) \quad \underline{The complete finitely axiomatized theories 
of order are dense}, IJM 23(1976), 200-208 (with A. Amit).

We prove that every sentence of the theory of linear order which has a model,
has a model with finitely axiomatized theory.
\bigskip

\noindent
(B58) \quad \underline{Decidability of a portion of the 
predicate calculus}, IJM 28(1977).

During this century many fragments of first-order logic were proved to be
decidable or undecidable.  The most natural fragments are classes of prenex
sentences defined by restrictions on prefix and numbers of monadic, dyadic,
etc., predicate and function symbols.  Such classes are called standard
below.  The decision problem for standard classes without equality 
is settled.
See the complete picture in the article, ``The decision problem for standard
classes" by Y. Gurevich in JSL 41(1976).  According to that article the only
open case of the decision problem for standard classes with equality and at
least one function symbol is the class (let's call it $K$) of
$\exists \ldots \exists \forall \exists \ldots \exists$ sentences with
arbitrary predicate symbols and exactly one function symbol, which is unary.
Here we settle the decision problem for standard classes with equality and
at least one function symbol by proving decidability.  Also the decision
problem for satisfiability in finite models is decidable.  Let us say a few
words about the method.  Given $\varphi \in K$ we study closure properties
of the class of models of $\varphi$ mainly under what operations it is
preserved.

See the book by Berger Gurevich.
\bigskip

\noindent
(B59) \quad \underline{Singular Cohomology in $\bfL$}, IJM 26(1977), 313-319
(with H.L. Hiller). 
\begin{theorem}
$(\bfV = \bfL)$.  If $G$ is a $\kappa$-free non-free abelian group
of power $\kappa$ then $\Ext(G,\bbZ)$ has power $\kappa^+$ hence for no
torsion free $G,\Ext(G,\bbZ)$ has power $\aleph_0$.
\end{theorem}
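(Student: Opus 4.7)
\smallskip\noindent
\textbf{Proof proposal.} First I would reduce to the case $\kappa$ regular uncountable: by the Singular Compactness Theorem (B52), if $\kappa$ is singular then a $\kappa$-free abelian group of cardinality $\kappa$ is already free, contradicting the hypothesis. Fix a filtration $G=\bigcup_{i<\kappa}G_i$ by a continuous increasing chain of free subgroups of cardinality $<\kappa$, and set $\Gamma=\Gamma(G)=\{i<\kappa : G/G_i \text{ is not }\kappa\text{-free}\}$ as in the definition recalled in (B44). Since $G$ is not free, $\Gamma$ is stationary in $\kappa$ modulo the club filter; after a harmless reindexing we may assume $G_{i+1}/G_i$ carries a genuine non-freeness witness whenever $i\in\Gamma$. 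The upper bound $|\Ext(G,\mathbb{Z})|\le 2^\kappa=\kappa^+$ is immediate under GCH (which holds in $\bfL$), so only the lower bound is at issue.

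To produce $\kappa^+$ pairwise inequivalent extensions I would use $\diamondsuit_\Gamma$, which holds in $\bfL$ for every stationary $\Gamma\subseteq\kappa$. For each $\eta\in{}^\Gamma 2$ I would recursively build an extension $0\to\mathbb{Z}\to H_\eta\to G\to 0$ as an increasing union $H_\eta=\bigcup_i H_\eta^i$ with $H_\eta^i$ an extension of $\mathbb{Z}$ by $G_i$. For $i\notin\Gamma$ the step is forced: $G_{i+1}/G_i$ is free enough that $H_\eta^{i+1}$ is determined by $H_\eta^i$ up to irrelevant choices. For $i\in\Gamma$, the non-$\kappa$-freeness of $G/G_i$ together with a suitable choice of generators in $G_{i+1}\setminus G_i$ provides two extensions of $H_\eta^i$ to a group mapping onto $G_{i+1}$ which differ by a $1$-cocycle that cannot be trivialized inside any further extension living above $G_i$; the bit $\eta(i)\in\{0,1\}$ selects between them. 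The $\diamondsuit$-sequence is used to guess, at each $i\in\Gamma$, a candidate equivalence of extensions $\phi_i\colon H_\eta^i\to H_{\eta'}^i$ sitting over $\id_{G_i}$; the choice at stage $i$ is made to block any extension of $\phi_i$ past $G_i$ whenever $\eta(i)\ne\eta'(i)$, using the obstruction just manufactured.

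A standard diamond book-keeping argument then shows that for $\eta\ne\eta'$ the extensions $H_\eta$ and $H_{\eta'}$ are inequivalent, since any equivalence would restrict to an equivalence on a club of stages, and on a stationary sub-club it would have been predicted by the diamond and diagonalized against. Thus $|\Ext(G,\mathbb{Z})|\ge 2^\kappa=\kappa^+$, completing the main part. The \emph{hence} statement follows by induction on $|G|$: for countable torsion-free $G$ the classical result of Stein gives $\Ext(G,\mathbb{Z})=0$ or $|\Ext(G,\mathbb{Z})|=2^{\aleph_0}$; for uncountable torsion-free $G$ with nonzero $\Ext$, let $\kappa$ be minimal such that some subgroup $G'\subseteq G$ of cardinality $\kappa$ fails to be free. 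Then $G'$ is $\kappa$-free and non-free of size $\kappa$, so the main theorem gives $|\Ext(G',\mathbb{Z})|=\kappa^+\ge\aleph_1$, and the surjection $\Ext(G,\mathbb{Z})\twoheadrightarrow \Ext(G',\mathbb{Z})$ coming from the long exact sequence forces $|\Ext(G,\mathbb{Z})|\ge\aleph_1$.

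The principal obstacle is the diagonalization step inside $\Gamma$: one must exhibit, at each $i\in\Gamma$, two extensions of $H_\eta^i$ which remain provably inequivalent after being completed above $G_i$, and show that the relevant invariant is visible to the $\diamondsuit$-guessed partial equivalence. This is where the hypothesis that $G/G_i$ is not $\kappa$-free (rather than merely that $G_{i+1}/G_i$ is non-free) is used, since the obstruction has to survive all further extensions, not merely the next step.
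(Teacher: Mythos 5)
\smallskip\noindent
Since (B59) is an abstract, the paper records only the statement and its topological corollary, so your proposal can only be measured against the strategy of the original Hiller--Shelah argument (refined in (B91)). Your architecture is the right one: singular compactness forces $\kappa$ regular, $\Gamma(G)$ is stationary, GCH gives the upper bound $2^\kappa=\kappa^+$, $\diamondsuit_\Gamma$ drives the lower bound, and the surjectivity of $\Ext(G,\bbZ)\to\Ext(G',\bbZ)$ handles the ``hence''. But your diagonalization has a genuine gap: you block the guessed equivalence $\phi_i$ only ``whenever $\eta(i)\ne\eta'(i)$''. Two distinct branches may differ at a single index of $\Gamma$ (or on a non-stationary set of indices), whereas the stages at which $\diamondsuit_\Gamma$ correctly predicts $(\eta\rest i,\eta'\rest i,\phi\rest H^i_\eta)$ form merely \emph{some} stationary set; nothing forces these two sets to meet, so for such pairs no catch stage ever diagonalizes against the purported equivalence $\phi$. (There is also a circularity: at stage $i$ of a tree construction you do not yet know the bits $\eta(i),\eta'(i)$, only the guessed prefixes.) The standard repair is to arrange, at each catch stage whose guessed prefixes are distinct, that $\phi_i$ fails to extend for \emph{all four} choices of the next bits --- possible because the obstruction group below is infinite --- so that any catch stage above the first point of difference suffices; alternatively, prove only $|\Ext(G,\bbZ)|>\kappa$ (enough under GCH) by diagonalizing against a purported enumeration by $\kappa$ cosets.

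The deeper gap is the one you yourself flag as ``the principal obstacle'': producing, at $i\in\Gamma$, two continuations of $H^i_\eta$ over $G_{i+1}$ whose difference defeats the guessed $\phi_i$. Non-$\kappa$-freeness of $G/G_i$ does not by itself manufacture this. What is needed is $\Ext(G_{i+1}/G_i,\bbZ)\ne 0$ for the non-free quotient $G_{i+1}/G_i$ of cardinality $<\kappa$: the continuations compatible with extending a fixed $\phi_i$ form a single coset modulo (the image of) $\Ext(G_{i+1}/G_i,\bbZ)$, so one escapes it exactly when that group is non-trivial. For countable quotients this is Stein's classical ZFC theorem; for uncountable ones it requires an induction on cardinality, i.e.\ reusing the theorem being proved (equivalently, the ``$\bfV=\bfL$ implies non-free groups are non-Whitehead'' machinery of (B44), (B52)). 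Your stronger claim that the difference cocycle ``cannot be trivialized inside any further extension living above $G_i$'' independently of the guess is false in general --- the connecting map from $\Hom(G_i,\bbZ)$ can cover all of $\Ext(G_{i+1}/G_i,\bbZ)$ --- which is precisely why the diamond must guess $\phi_i$ and why the choice at stage $i$ must depend on that guess.
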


A consequence is 
that there does not exist a topological space $X$ and integer
$n$ such that $H^n(X,\bbZ) = \bbQ$ ($H^n$ -- ordinary, singular cohomology).

In \cite{Sh:125} we construct a universe of set theory where there exists a
topological space $X$ and an integer such that $H^n(X,\bbZ) = \bbQ$.
\bigskip

\noindent
(B60) \quad \underline{Possible orderings of an indiscernible sequence}, 
 Bull. London, Math. Soc. 9(1977), 
212-215 (with W. Hodges and A. Lachlan).

Let $n$ be a positive integer and $X$ a set of cardinality $\ge \max
(2n-3)$, $n+2$.  Suppose $(X,<)$ is indiscernible with respect to an
$n$-relation $R$, i.e. $<$ linearly orders $X$, and whether $Rx_1,\dotsc,x_n$
holds depends only on the relative order of $x_1,\dotsc,x_n$ in $X$.  

Then one of the following holds:
\mn
\begin{itemize}
\item  for all linear orderings $<_1$ of $X,(X,<_1)$ is indiscernible w.r.t.
$R$;
\sn
\item  if $<_1$ linearly orders $R$ than $(X,<_1)$ is indiscernible w.r.t.
$R$ iff $<_1$ comes from $<$ by moving an initial segment to the end and
maybe reversing the order;
\sn
\item  if $<_1$ linearly orderings $X$ then $(X,<_1)$ is indiscernible w.r.t.
$R$ iff $<_1$ is either $<$ or $>$, except that at each end of $(X,<)$ some
elements may be permuted (but at most $n$ in all).  The theorem is equivalent
to an earlier result of Fransnay not mentioning indiscernibles, but
our bounds improve his.
\end{itemize}
\bigskip

\noindent
(B61) \quad \underline{Interpreting set theory in the 
endomorphism semi-group of a free algebra} \underline{or in a category}, 
Ann. Sci. Univ. Clermont Sec. Math. fase 13(1976), 1-29.

In the category of any variety we interpret set theory, essentially.  
Moreover, in the semi-group of endomorphism of $F_\lambda$, 
the free algebra with $\lambda$ generators, we interpret 
$(\cH(\lambda^+),\in)$, essentially.  
The essentially indicate that we assume some terms call beautiful does
not exist, but even if they exist we get the best possible results.

The method is using stationary sets and deepness of trees.
\bigskip

\noindent
A related problem is about the group of automorphism of $F$, $\Aut(F_\lambda)$,
and we believe 
\begin{conjecture}\label{b61}
For varieties satisfying $(*)$ below, a similar answer
holds, whereas if $(*)$ fails the group is like the group of permutations
of $\lambda$ (see [24,25]).
\mn
\begin{enumerate}
\item[$(*)$]   Let $F$ be freely generated by $\{x_i:i < \omega + \omega\}$.  
Then there is an automorphism $h$ of $F$, $h(x_i)=x_i$ for $i <\omega$ 
and $h(x_\omega) \notin \langle x_i:\omega \le i < \omega + \omega \rangle_F$.
\end{enumerate}
\end{conjecture}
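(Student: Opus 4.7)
The plan is to imitate the strategy used for the endomorphism semigroup in the main body of B61, while paying extra attention to the invertibility constraint that distinguishes $\Aut(F_\lambda)$ from $\End(F_\lambda)$. Under $(*)$, the aim is to interpret $(\cH(\lambda^+),\in)$; when $(*)$ fails, the aim is a reduction of the first-order theory of $\Aut(F_\lambda)$ to that of the symmetric group $\Sym(\lambda)$, exactly in the spirit of the references cited as [24,25].

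For the positive direction, I would first promote $(*)$ from its stated ``$\omega + \omega$'' form to a $\lambda$-version: partition a free basis of $F_\lambda$ into pairwise disjoint blocks $\langle B_\alpha : \alpha < \lambda\rangle$ of order type $\omega + \omega$, and, using $(*)$ inside each block, produce an automorphism $h_\alpha \in \Aut(F_\lambda)$ that acts non-trivially only on terms involving $B_\alpha$ and that moves its distinguished generator outside the expected subalgebra. These $h_\alpha$ together with the obvious permutation automorphisms of the basis form a rich ``local'' family inside $\Aut(F_\lambda)$ that is detectable by commutation and conjugation. Second, I would code subsets $A \subseteq \lambda$, and then stationary sets and well-founded trees as in B61, by choosing which $h_\alpha$'s to multiply into a single global $h_A \in \Aut(F_\lambda)$; the deepness/tree machinery of B61 then lets one read $A$ back from the group-theoretic invariants of $h_A$. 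Iterating the coding along a stationary-set scaffold gives an interpretation of $\cH(\lambda^+)$, parallel to the endomorphism-semigroup interpretation already established.

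For the negative direction, assume $(*)$ fails. Then any automorphism $h$ of the free algebra on $\omega + \omega$ generators that fixes $\{x_i : i<\omega\}$ must send $x_\omega$ back into $\langle x_i : \omega \le i < \omega+\omega\rangle_F$; by induction on the generators and a standard amalgamation/extension argument this propagates to show that every $h \in \Aut(F_\lambda)$ is, up to a small ``inner'' piece, induced by a permutation of a suitable basis. Consequently $\Aut(F_\lambda)$ embeds as a bi-interpretable quotient of a semidirect product involving $\Sym(\lambda)$, so that the definability results of [24,25] for permutation groups transfer verbatim, giving the ``permutation-group-like'' behavior predicted by the conjecture.

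The main obstacle, and the reason the statement is stated only as a conjecture, will be the invertibility control in the positive direction. In the endomorphism case one is free to use non-injective maps to sharply distinguish local regions, but every $h_\alpha$ constructed here must be an honest automorphism and its inverse must not erase the coded information, so one cannot build $h_A$ as a crude product of projections. One therefore needs a delicate construction in which each $h_\alpha$ is a genuine automorphism whose support is ``small'' and which, when composed with others, preserves enough commutation data to recover $A$. Carrying this out while respecting $(*)$ \emph{uniformly} across varieties, rather than for a single signature, is what appears to be the technical heart of the conjecture.
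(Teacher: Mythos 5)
The statement you are addressing appears in the paper only as a conjecture: B61 proves the interpretation results for the endomorphism semigroup of $F_\lambda$ and then merely records the belief that the automorphism group should behave analogously, split according to whether $(*)$ holds. The paper supplies no proof, so there is nothing to compare your argument against; the only question is whether your proposal actually settles the conjecture, and it does not. Your text is a strategy outline, and you concede as much in your final paragraph, where you identify ``invertibility control in the positive direction'' as the unresolved technical heart. That concession is accurate, but it means the proposal is a research plan, not a proof.

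More concretely, both directions have genuine gaps. In the positive direction, $(*)$ supplies one automorphism of the free algebra on $\omega+\omega$ generators with a particular non-tameness property; it does not follow that one can manufacture, in an arbitrary variety, a family $\langle h_\alpha : \alpha<\lambda\rangle$ of automorphisms of $F_\lambda$ each supported on a single block $B_\alpha$, nor that products $h_A$ of such automorphisms have invariants, definable in the group language, from which $A$ can be recovered --- that recovery is exactly the interpretation one is trying to build, and asserting that the deepness/tree machinery of B61 ``lets one read $A$ back'' imports the endomorphism-semigroup argument into a setting where its key tools (non-injective maps such as projections used to localize and separate regions) are unavailable. In the negative direction, the claim that failure of $(*)$ forces every automorphism of $F_\lambda$ to be, up to a small inner piece, induced by a permutation of a suitable basis is itself a structure theorem for $\Aut(F_\lambda)$, which you assert via ``a standard amalgamation/extension argument'' without giving it; the negation of $(*)$ concerns a single automorphism of one small free algebra fixing one specific set of generators, and propagating it to arbitrary automorphisms of $F_\lambda$ (which need not stabilize any basis block at all) is not routine. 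Both halves are plausible directions, but neither is carried out, which is consistent with the statement remaining open in the paper.
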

\bigskip

\noindent
(B62) \quad \underline{The theorems of Beth and Craig in Abstract 
Model theory I, The abstract} \underline{setting}, 
Trans AMS 256(1979) (with J.A. Makowsky).

In the context of abstract model theory various definability properties,
their interrelations and their relation to compactness are investigated.

Beth's theorem together with a Feferman-Vaught theorem for tree-like sums
implies a weak form of Robinson's consistency lemma, hence e.g. assuming
compactness, Craig theorem follows.  So in many cases Craig and Beth's
theorems are equivalent.  Also, essentially, Robinson consistency lemma
together with the Feferman-Vaught theorem for pairs implies full compactness.
\bigskip

\noindent
(B63) \quad \underline{The Hanf number of the first-order 
theory of Banach spaces}, Trans AMS 244(1978) (with J. Stern). 

In this paper, we discuss the possibility of developing a nice, i.e. first
orer theory for Banach spaces: the restrictions on the set of sentences for
recent compactness arguments applied to Banach spaces as well as for other
model-theoretic results are both natural and necessary; without them we rove
here that we essentially get a second order logic with quantification over
countable sets.  Especially, the Hanf number for sets of sentences of the
first-order theory of Banach spaces is exactly the Hanf number for the second
order logic of binary relations (with the second order quantifiers ranging
over countable sets).
\bigskip

\noindent
(B64) \quad \underline{Whitehead group may not be free even 
assuming Ch.I}, IJM 29(1978), 239-247.

In spite of its name this is a paper in set theory.  Its purpose is to prove
that many consequences of MA are consistent with GCH.  More specifically,
it is possible that GCH holds, $S \subseteq \omega_1$ is a stationary,
co-stationary set of limit ordinals, 
$\omega_1 \setminus S$ is not small in fact
$\diamondsuit^*_{\omega_1 \setminus S}$ holds, but e.g. every ladder system
$\bar \eta = \langle \eta_\delta:\delta \in S \rangle$ can be
$\aleph_0$-uniformized where ``$\bar \eta = 
\langle \eta_\delta:\delta \in S \rangle$ is a ladder system" means 
each $\eta_\delta$ is an increasing
$\omega$-sequence converging to $\delta$ and $\kappa$-uniformized means that
for every $\bar c = \langle c_\delta : \delta \in S \rangle$, 
$c_\delta \in {}^\omega \kappa$ there is $h : \omega_i \rightarrow \kappa$ 
such that for every $\delta \in S$, for large enough $n$, 
$c(\eta_\delta(n)) = c_\delta(n)$.
By \cite{Sh:65}, $\omega_1 \setminus S$ has to be stationary.
So we prove that $\diamondsuit_{\omega_1} \ne \diamondsuit_S$ 
($S \subseteq \omega_1$ stationary).

We use straightforwardly countable support iteration of length $\omega_2$;
the main problem is why $S$ remains stationary.  For this we take
$N \prec ({\cH}(\aleph_3),\in)$ and build a sequence of $\bar q^n = 
\langle q_\eta: \eta\in T_n \rangle$, $T_n$ a finite tree, and in the 
end get by inverse limit, a tree of candidates for being a condition 
generic over $N$.  Using $\omega_1 \setminus S$ we then prove at 
least one is contained in a condition.  
\bigskip

\noindent
(B65) \quad \underline{A weak form of the diamond which follows 
from $2^{\aleph_0} < 2^{\aleph_1}$} IJM 29(1978), 239-247 (with K. Devlin).

We prove that weak CH (i.e. $2^{\aleph_0} < 2^{\aleph_1}$) implies a weak
variant of the diamond, explicitly:
\mn
\begin{enumerate}
\item[$(\Phi)$]  for each function $F:{}^{\omega_1 >}2 \rightarrow 2$
there is $g \in {}^{\omega_1}2$, such that for any $f \in {}^{\omega_1}2$
\[
\{\alpha < \omega_1 : F(f \rest \alpha) = g(\alpha)\} \text{ is stationary}.
\]
\end{enumerate}
\mn
Note that the ``guessing power" is being able to say ``yes or no" (i.e. 0 or
1), i.e. the range of $g$ is 2 (not even 3).  Also the guesses are on 
$\omega_1$, not any stationary subset $S$, so we can phrase $\Phi(S)$, and
let us call $S$ small if it fails.  The small sets form a normal idea.  There
are generalization for higher cardinals.

A corollary is that if $\Gamma(G) \subseteq \omega_1$ (see \cite{Sh:44}) is
not small then $G$ is not Whitehead.  Also if $S \subseteq \omega_1$ is not
small, $\bar \eta = \langle \eta_\delta : \delta \in S \rangle$ a ladder
system then it is not even 2-uniformizable (see \cite {Sh:64}).
\bigskip

\noindent
(B66) \quad \underline{End extensions and numbers of countable models}, 
JSL 43(1978), 550-562.

We prove that:
\mn
\begin{itemize}
\item  every model of $T = \mathrm{Th}(\omega,<,\ldots)$ for $T$ countable has a
(proper) end extension (so addition and multiplication are not used, 
hence definition by induction is not available);
\sn
\item  every countable theory with an infinite order and Skolem functions
has $2^{\aleph_0}$ non-isomorphic countable models; (instead of 
Skolem functions, existence of arbitrarily large finite intervals 
can be assumed);
\sn
\item  if every model of $T$ has an end extension, then every $|T|$-universal
model of $T$ has an end extension definable with parameters.
\end{itemize}
\bigskip

\noindent
(B67) \quad \underline{On the number of minimal models}, JSL 43(1978), 
475-480.

We prove that for every $\kappa$, $1 \le \kappa \le \aleph_0$, there is a
(countable) complete theory $T$, with no prime model, and exactly $\kappa$
minimal models (up to isomorphism).  The examples are specifically built for
this purpose.

It is clear that if the number is $> \aleph_1$ it is $2^{\aleph_0}$.
\bigskip

\noindent
(B68) \quad \underline{J\'onsson algebras in successor cardinals}, 
IJM 30(1978), 57-64.

We shall show here that in many successor cardinals $\lambda$, there is a
J\'onsson algebra (in other words $\mathrm{Jn}(\lambda)$ or $\lambda$ is not 
a J\'onsson cardinal).  In connection with this we show that, e.g., for every 
ultrafilter $D$ over $\omega$, in $(\omega_\omega,<)^\omega/D$ there is no 
increasing sequence of length $\aleph_{(2^{\aleph_0})^+}$ and a lemma on the 
existence of l.u.b. We prove e.g., $\mathrm{Jn}(\aleph_{\omega +1})$ when 
$2^{\aleph_0} \le \aleph_{\omega + 1}$ and $\mathrm{Jn}(2^{\aleph_0})$ when 
$2^{\aleph_0} = \aleph_{\alpha +1}$, 
$\alpha < \omega_1$ and similar results for higher cardinals.  
\bigskip

\noindent
(B69) \quad \underline{On a problem of Kurosh, J\'onsson groups and 
applications work problem}, II, edited by Adjan, Boone and Higman 
North Holland Publ. Col., 1979

We prove some results in group theory in model theoretic spirit. 

\noindent
1) Construct J\'onsson groups of cardinality $\aleph_1$, and others as well
($G$ is J\'onsson group of cardinality $\lambda$ ($> \aleph_0$) if it has
no proper subgroup of cardinality $\lambda$); this solves a problem 
of Kurosh. 

\noindent
2) Our group is simple with no maximal subgroup; so it follows that taking
Frathini subgroups does not commute with direct products. 

\noindent
3) Assuming $2^{\aleph_0} = \aleph_1$, our group has no non-trivial Hausdorff
topology; this answers a question of Markov.

We use small cancellation theory, but quote exactly what we use; we assume
only knowledge of naive set theory and little group theory (except the
theorem we quote).
\bigskip

\noindent
(B70) \quad \underline{Modest theory of short chains}, II, 
JSL 44(1979), 491-502 (with Yuri Gurevich).

A chain (i.e. a linearly ordered set) is \emph{short} if it embeds neither
$\omega_1$ nor $\omega^*_1$.  Shortness is easily expressible in the monadic
language of order.  Given a chain $C$ and a positive integer $p$ we define
(in the monadic language of order) $p$-\emph{modest} subsets 
in $C$.  A subset of $C$ is \emph{modest} in $C$ if it 
is $p$-modest in $C$ for every $p$. A chain is \emph{modest} if it is modest 
in itself, it is \emph{absolutely modest} if it is modest in its Dedekind completion.
\bigskip

\begin{theorem}
\label{b70.1}  The monadic theory of modest short chains
coincides with that of countable chains.   
\end{theorem}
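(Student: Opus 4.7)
The plan is to prove both inclusions of monadic theories. One direction, that every monadic sentence true in all modest short chains holds in all countable chains, reduces to checking that every countable chain is itself a modest short chain; shortness is immediate since the cardinality is $<\aleph_1$, and countability forces every subset to be $p$-modest for every $p$ (there is simply no room for the ``bad'' configurations that $p$-modesty is designed to exclude). The substantive direction is the converse: given a monadic sentence $\varphi$ of quantifier rank $n$ and a modest short chain $C$ with $C \models \neg\varphi$, I must exhibit a countable chain satisfying $\neg\varphi$.

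For the hard direction I would marshal three ingredients. The first is a Feferman--Vaught style composition theorem for the monadic theory of sums of chains, saying that the $n$-theory of $\sum_{i\in I}C_i$ is computable from the $n$-theory of $I$ enriched with unary predicates labelling the $n$-theories of the summands; presumably this machinery is already available from the predecessor paper. The second is a Ramsey-type theorem for additive two-place colourings of a chain with finitely many values, in the spirit of Theorem~\ref{b42}(1), which produces a convex sub-chain on which the colouring is homogeneous. The third is the shortness hypothesis on $C$, which forbids strictly $\omega_1$-long or $\omega_1^*$-long towers of distinct types.

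Using these, I would recursively decompose $C$ into convex pieces on which the monadic $n$-type stabilises. Because $C$ is short, no branch of the decomposition can be uncountable in either direction, so the skeleton of convex pieces is essentially countable. Modesty then enters to tame second-order quantification: when evaluating $\varphi$ on $C$, one should show that existential subset-witnesses may be taken to be modest, and each modest subset of $C$ is determined by countably much data relative to the skeleton. I would then extract a countable sub-chain $C' \subseteq C$ whose decomposition pattern and type-labels match those of $C$, and invoke the composition theorem to conclude $C' \equiv_n C$, so that $C' \models \neg\varphi$.

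The main obstacle will be the interface between modesty and the composition machinery, namely showing that the subset-witnesses relevant for $\varphi$ in $C$ can always be replaced by modest ones with the same truth value, and that their restrictions to the countable skeleton correspond to genuine subsets of $C'$. This is exactly where the distinction between modest and absolutely modest matters: cuts in $C$ that disappear in the countable approximation reappear in the Dedekind completion, so the definition of modesty must be robust enough to survive this passage. Once that bookkeeping is set up correctly, the rest should proceed by a fairly mechanical induction on quantifier rank using the composition theorem.
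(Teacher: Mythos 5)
Note first that item (B70) in this collection is only an abstract: Theorem~\ref{b70.1} is quoted there without proof, and even the definition of $p$-modesty is not reproduced, so your outline can only be measured against the intended Gurevich--Shelah argument rather than a proof printed here. At the level of strategy you have assembled the right ingredients: the easy inclusion does reduce to checking that every countable chain is a modest short chain, and the hard inclusion is indeed attacked via the composition (Feferman--Vaught type) theorem for sums of chains from (B42), the additive-colouring Ramsey theorems of Theorem~\ref{b42}, shortness to bound the decomposition, and modesty to control the set quantifiers.

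The genuine gap is that the one step carrying the entire content of the theorem is exactly the step you defer. You never state what a $p$-modest subset is, so you can neither verify your claim that countability ``forces'' every subset to be $p$-modest (this is a lemma of Part I, not a triviality --- a countable dense chain \emph{can} be partitioned into $p$ dense pieces, so modesty cannot simply mean ``no bad configurations fit''), nor carry out the witness-replacement argument; the ``interface between modesty and the composition machinery'' that you label a bookkeeping obstacle is the theorem itself, namely that modesty forces the monadic $n$-theory of each dense-in-itself piece of the decomposition to coincide with the $n$-theory of a countable dense shuffle. A second concrete problem is your plan to extract a countable \emph{subchain} $C'\subseteq C$ with $C'\equiv_n C$: monadic equivalence is not preserved under substructure, and the intended construction does not produce a subchain of $C$ at all but an abstract countable chain realizing the same composition recipe --- iterated countable ordered sums and shuffles of $n$-theories --- whose existence as the $n$-theory of a genuine countable chain is what the closure properties of the countable case (Rabin's theorem, reproved in Part II) supply. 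Without the definition of modesty and without replacing ``countable subchain'' by ``countable chain realizing the same recipe,'' the induction on quantifier rank you describe cannot be started.
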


\noindent
Decidability of the latter theory is due to Rabin; it is reproved here. 
\bigskip

\begin{theorem}
\label{b70.2}
The monadic theory of any non-modest short chain
is undecidable under the Continuum Hypothesis.  (This use of the Continuum
Hypothesis is eliminated in a later paper ``Monadic theory of order and
topology in ZFC" by the same authors.)
\end{theorem}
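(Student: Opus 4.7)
The plan is to interpret an already-known undecidable theory (such as true first-order arithmetic, or the monadic theory of the real line from B42) inside the monadic theory of the given non-modest short chain $C$, working with parameters.

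First I would unpack non-modesty into a concrete definable witness. By hypothesis there exist $A \subseteq C$ and a positive integer $p$ such that $A$ fails to be $p$-modest in $C$. Since $p$-modesty was defined in the monadic language of order, the negation is also monadically expressible, and I would extract from the failure a specific configuration inside $C$: a coherent family of cuts, convex pieces or indexed subsets whose behaviour cannot be reduced to the countable chain case handled by Theorem \ref{b70.1}. Conceptually this should be an "antimodest skeleton" which encodes more information than any single countable chain can.

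Next I would exploit shortness to constrain the ambient combinatorics. Since $C$ embeds neither $\omega_1$ nor $\omega_1^*$, every well-ordered or reverse well-ordered subchain is countable, and hence every cofinality and coinitiality appearing inside $C$ is at most $\aleph_0$. This forces the witness from the previous step to live, locally, in a structure combinatorially parallel to a subchain of the real line: there are only countable approximations upward and downward at every point, while globally the cardinality and complexity can still be large. With CH in hand, I would then run a diagonal construction (patterned on the interpretation of true first-order arithmetic in the monadic theory of $\mathbb{R}$ from B42) to produce, inside $C$ and with parameters, a monadically definable copy of a known undecidable structure. The standard template is: code integers by prescribed convex pieces carved out by the witness, define the successor relation via a CH-based guessing principle on countable cofinalities, and read off addition and multiplication from the non-modest configuration; alternatively, one directly interprets an embedded copy of the real line and invokes B42.

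The main obstacle is the uniformity of this interpretation across \emph{all} non-modest short chains. The failure of $p$-modesty can manifest itself in quite different shapes from one chain to another, so the argument cannot rely on a particular structural normal form; the definition of $p$-modesty must be used in a way that turns any failure into the same kind of codable object. This is exactly the reason the statement is not a direct corollary of the monadic-theory-of-$\mathbb{R}$ theorem, but rather a parallel result whose content is precisely that non-modesty is strong enough to supply a surrogate for the real line. The role of CH is to compensate for the fact that a general non-modest short chain lacks the unconditional diamond-like guessing principles that $\mathbb{R}$ provides, ensuring that the arithmetic interpretation can actually be written down in the monadic language.
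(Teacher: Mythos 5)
Note first that the source you are being compared against is an abstract collection: for (B70) the paper records only the statement of this theorem, with no proof, so the comparison can only be against the strategy that the surrounding abstracts make visible. That strategy is indeed the one you name: under CH one interprets true first-order arithmetic (already known undecidable, and interpreted under CH in the monadic theory of the real line in (B42)) inside the monadic theory of the given chain, and (B123) is precisely the later paper that removes CH from this interpretation. So your choice of target and your identification of the role of CH are correct.

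The genuine gap is that the two load-bearing steps of such an interpretation are not carried out but only named. First, you posit an ``antimodest skeleton'' extracted from the failure of $p$-modesty, but you never say what object this is, what monadic formulas isolate it, or why an arbitrary failure of $p$-modesty (for an arbitrary $p$, in an arbitrary short chain) always yields the \emph{same kind} of codable configuration; you explicitly flag this uniformity as ``the main obstacle,'' but that obstacle \emph{is} the theorem --- the whole content of the result is that non-modesty, in any of its manifestations, suffices. Second, the CH step is described only as ``a diagonal construction'' and ``a CH-based guessing principle on countable cofinalities''; in the actual line of argument descending from (B42), CH is used to build, by a length-$\omega_1$ diagonalization against all countable candidates, a specific uncountable subset of the chain whose monadically definable properties simulate quantification over countable objects, and it is this set, combined with the witness to non-modesty, that supports the arithmetic coding. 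Without specifying either the extracted configuration or the diagonalized set, the proposal is a plausible research plan rather than a proof: nothing in it would fail, but nothing in it yet establishes undecidability.
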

\smallskip

\noindent
The \emph{modest theory} of a chain $C$ is the 
theory of $C$ in the monadic
language of orders when the set variables range over absolutely modest 
subsets of $C$.
\bigskip

\begin{theorem}
\label{b70.3}
The modest theory of the real line $R$ is
decidable; it coincides with the theory of $R$ with quantification over
countable sets, with the theory of $R$ with quantification over sets of
cardinality $< 2^{\aleph_0}$, and with the modest theory of any
complete short chain without jumps and ends having an everywhere 
dense absolutely modest subset.  
\end{theorem}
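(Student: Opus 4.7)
The plan is to reduce the modest theory of $R$ to the monadic theory of a countable chain and apply Rabin's theorem (equivalently, Theorem \ref{b70.1}). Fix a countable dense absolutely modest subset $D \subseteq R$, for instance $\bbQ$. The first task is to prove a Feferman--Vaught style composition lemma: the $p$-monadic type of $(R,<)$ enriched with any finite tuple of absolutely modest unary predicates is computable from the $p$-monadic type of $(D,<)$ enriched with codings of those predicates as subsets of $D$, where points of $R \setminus D$ are coded by their Dedekind cuts in $D$. The density of $D$, completeness of $R$, and absence of jumps and endpoints drive the back-and-forth underlying this lemma, while shortness guarantees that long monotone sequences do not interfere.

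Next I would use this composition lemma to establish the equivalence of the three set-quantifiers appearing in the statement. The inclusion ``countable implies absolutely modest'' is immediate from the definition of $p$-modest. For the converse, and for the passage to subsets of cardinality $< 2^{\aleph_0}$, the key observation is that the monadic type of a unary predicate $B \subseteq R$ depends only on the trace $B \cap D$ together with the cut pattern of $B \setminus D$ relative to $D$. I would show that whenever $B$ is absolutely modest, this trace-and-cut datum is already realized by some countable $B' \subseteq R$; and that when $|B| < 2^{\aleph_0}$, the same datum is again realized by a countable $B'$, because a genuinely new monadic type would amount to coding through $B$ a long well-ordered sequence, contradicting shortness.

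For the universality clause, let $C$ be any complete short chain without jumps and endpoints possessing a dense absolutely modest subset $D_C$. By shortness $D_C$ inherits the property that its monadic theory coincides with that of a countable chain (Theorem \ref{b70.1}), and being dense without endpoints it is monadically equivalent to $\bbQ$. Applying the composition lemma to $C$ and to $R$ and comparing, one reads off that the two modest theories coincide. Decidability is then inherited: the problem reduces to the monadic theory of $\bbQ$ with quantification over arbitrary subsets, a countable chain, which is decidable by Rabin (reproved in the paper associated with Theorem \ref{b70.1}).

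The main obstacle is pinning down the composition lemma in the form needed. The technical hurdle is to identify the correct notion of trace-and-cut datum so that (i) it captures exactly the monadic type of a predicate on the chain, (ii) it varies coherently under the back-and-forth between $R$ and $C$, and (iii) the realizability step collapsing the $|B| < 2^{\aleph_0}$ quantifier to a countable one actually goes through. Each of these requires a Ramsey-style ingredient analogous to what underlies Theorem \ref{b70.1}, used to rule out large obstructions to modesty. Once the composition lemma is established, the three equalities and decidability follow uniformly.
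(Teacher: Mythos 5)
First, a caveat: this entry of E40 is only an abstract, so the document states Theorem \ref{b70.3} but contains no proof of it (the proof lives in the Gurevich--Shelah JSL paper being summarized); there is therefore no in-paper argument to compare against step by step. Judged on its own terms, your proposal has the right skeleton --- a Feferman--Vaught composition lemma plus Ramsey-type homogeneity, reduction to the monadic theory of a countable chain, and Rabin's theorem (Theorem \ref{b70.1}) --- and that is indeed the architecture of the actual proof. But your central step is stated in a form that is false, and the point where it fails is exactly where all the work lies.

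You assert that ``the monadic type of a unary predicate $B \subseteq R$ depends only on the trace $B \cap D$ together with the cut pattern of $B \setminus D$ relative to $D$.'' For arbitrary $B$ this cannot be true: if it were, the full monadic theory of $R$ would reduce to the monadic theory of the countable chain $D$ and hence be decidable, contradicting Theorem \ref{b70.2} and the interpretation of true arithmetic in the monadic theory of $R$ under CH from (B42)/(B123). Moreover, for uncountable $B$ the ``cut pattern of $B \setminus D$'' is essentially $B \setminus D$ itself (the cuts of $D = \bbQ$ are the reals), so it is not a datum over a countable structure at all. What you need is the restricted statement that for \emph{absolutely modest} tuples $\bar B$ the monadic type is captured by countable data; proving that is precisely the content of the definition of $p$-modesty, which your proposal never engages with --- ``modest'' cannot be treated as a black box here, since the equivalence of the three set quantifiers is exactly a statement about which sets are modest. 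Relatedly, your reason for collapsing the $<2^{\aleph_0}$ quantifier (``a genuinely new monadic type would amount to coding through $B$ a long well-ordered sequence, contradicting shortness'') is not the right mechanism: shortness is a property of the chain $R$, holds automatically, and is not threatened by any subset. What is actually needed is that every set of cardinality $<2^{\aleph_0}$ is absolutely modest, a density/cardinality argument resting on the fact that every interval of $R$ has $2^{\aleph_0}$ points, and again this requires the definition of modesty. So the proposal identifies the correct reduction but leaves a genuine gap at its core.
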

It is proved also that Theorem 3 is in some sense best possible.
\bigskip

\noindent
(B71) \quad \underline{A note on cardinal exponentiation}, 
JSL 45(1980), 56-66.

For an $\aleph_1$-complete filter $D$ on $I$, for $f : I \rightarrow \Ord$, 
let: $\|f\|_D$ (the rank of $f$) be $\sup\{\|g\|_D:g <_D f\}$ and
$$T_D(f) = \sup\big\{|G|:g \subseteq I(\Ord),\ (\forall g \in G) g <_D f,\ 
(\forall g_1 \ne g_2 \in G)[g_1 \ne_D g_2]\big\}.$$

Our results are:
\mn
\begin{itemize}
\item  improvements of the bounds on $\|f\|_D$, e.g. if 
$|\alpha| = \beth_\omega$ and $|I| < \beth_\omega$ then $\| \alpha\|_D <
\beth^+_\omega$, hence
\sn
\item  corresponding bound on $2^{\aleph_\alpha},\aleph_\alpha$ strong 
limit, $\cf(\alpha) \ge \aleph_1$
\sn
\item  bounded on $2^{\aleph_\alpha}$ in new cases: if 
$\aleph_\lambda(\aleph_0)$ is the first $\aleph_\alpha 
= \alpha$ of cofinality $\lambda$ and
$\aleph_{\omega_1}(\aleph_0)$ is strong limit then 
$2^{\aleph_{\omega_1}(\aleph_0)} < \aleph_{\omega_2}(\aleph_0)$ 
\underline{provided} that the Chang conjecture holds.
\end{itemize}
\mn
The method is that in the proof rather than fixing a filter (say the club
filter) we look at all possible filters, say normal filters on $\omega_1$ 
(doing induction together) and that we get bounds on the rank by cases of
$T_D(-)$ (a kind of converse to previous results).
\bigskip

\noindent
(B72) \quad \underline{Models with second order properties I. 
Boolean Algebras with no definable} \underline{automorphisms}, AML 14(1978), 57-72.

This is (the first) part of a series of papers, in which we try to build
models with various second order properties.

Examples of such properties are:
\mn
\begin{enumerate}
\item[(a)]  let $T$ be a (first order) theory, and we want a model $M$,
such that $M \models T$ and every Boolean algebra definable in $M$, all its
automorphisms are definable in $M$
\sn
\item[(b)]  we extend first order logic by allowing quantification over 
automorphism of Boolean algebras, and we want to show this logic is compact 
(see \cite[Th. 2.6, pg. 353]{Sh:43}, \cite[Th. 25]{Sh:18}) for a definition 
of a second order quantification, and this amounts to assuming $T$ satisfies 
various schemes, e.g., $T$ has a model expanding $({\cH}(\kappa^+),\in)$
\sn
\item[(c)]   we have a model $M$ of $T$ in which a (definable) Boolean
algebra is rigid, and we want to build other such models.
\end{enumerate}
\mn
We want more concretely to build those models in specific cardinals, and
replace ``automorphisms of Boolean algebras" by automorphisms of other
structures (e.g. ordered fields) branches of trees, etc.

Note that (a) is stronger than (b) which is stronger than (c); and in (b),
(c) we can expand the language, so \wilog \, the theory has Skolem 
functions, 
and has enough build in set theory. But in (a) this is forbidden.  Note that
if we e.g. want to get a $(\lambda^+,\lambda)$-model in which a tree of
height $\lambda^+$ has only definable branches, Chang's original proof of his
two-cardinal theorem is not appropriate, as he extends the language (so to
encode finite sets).  Note also that (b) is good for giving examples of
compact logic.  For example, allowing quantification over automorphisms of
Boolean algebras, gives us a logic stronger than first order even on finite
models (we can say a Boolean algebra is atomic, and has an automorphism of
order two, which moves every atom.  This distinguishes among the finite
Boolean algebras between those with an even number of atoms.  We shall
prove this logic is compact (assuming GCH).

Let us return to this paper.

In \cite{Sh:84} we prove (assuming $\diamondsuit_{\aleph_1}$) that for a
Boolean algebra $B$, the (two-sorted) model $(B,\Aut(B))$ (with the
Boolean operations, and the operations of applying an automorphism (where
Aut$(B)$ is the group of automorphisms of $(B)$) is elementarily equivalent
to $(B',\text{Aut}(B'))$ for some $B'$ of cardinality $\aleph_1$.  We can
add more relations and make other strengthening.  Our first aim was to
generalize this to uncountable cardinalities.  This is done in \S1, for an
appropriate $\lambda$ our results are stronger; because we prove a theory
$T$ has a model in $\lambda^+$ in which every automorphism of $(P(M) \cup
Q(M),R^M)$ is inner (i.e. definable is the model) and we do not expand the
language to have, essentially, some replacement axioms.  Here the
requirements on $\lambda$ are severe; but this paper was written in order to
exemplify the technique developed for solving the problem.

In \S2 we use our technique to prove (a slightly stronger version of) the Chang
two-cardinal theorem without expanding the language.

In some sense our problem is to omit types of power $\lambda$ in models of
power $\lambda^+$.
\bigskip

\noindent
(B73) \quad \underline{Models with second order properties. II. 
Trees with no undefined branches}, AML 14(1978), 73-87.

We prove several theorems of the form: a first order theory $T$ has a model
$M$ (sometimes with additional conditions) such that (some) trees defined
in $M$, have no branches except those defined in it.  For this we omit types
of cardinality $\lambda$ for constructing models in $\lambda^+$ using
``strong splitting".  We have some applications, e.g. an example for compact 
logic $\bbL(Q)$, where in $\bbL_{\omega_1,\omega}(Q)$ well-ordering is 
definable, its compactness is proved in ZFC, and it is stronger than first 
order logic even for countable models; and eliminate extra set-theoretic axioms 
(like $\diamondsuit_{\aleph_1}$) from some theorems.  The paper also deals with 
constructing models of cardinality $\aleph_1$.

Also we prove the absoluteness of the answer to ``does 
$\psi \in \bbL_{\omega_1,\omega}(\exists^{\ge \aleph_1})$ have a model $M$ such that every
tree with sets of levels of cofinality $\aleph_1$ has no (full) branch
which is not (first order) not definable (with parameters)?".  So if we can
find such a model, assuming $\diamondsuit_{\aleph_1}$, then we can really
construct one\footnote{if we use a countable fragment ${\cL}$ of
$\bbL_{\omega_1,\omega}(\exists^{\ge \aleph_1})$ to which $\psi$ belongs and
consider ${\cL}$-definability with parameters, straightforwardly there is
such a model if $\diamondsuit_{\aleph_1}$ hence provably in ZFC} (e.g. a
model of ZF or PA: the ``classes" of it are branches of such a tree).
\bigskip

\noindent
(B74) \quad \underline{Appendix, Vaught two cardinal theorem revisited}.

We gave an alternative proof to Vaught's theorem $(\lambda^+,\lambda)
\rightarrow (\aleph_1,\aleph_0)$ which reveal the corresponding partition
theorems from \cite{Sh:8}, i.e. we characterize them.
\bigskip

\noindent
(B75) \quad \underline{A Banach space with few operators}, 
IJM 30(1978), 181-191.

Assuming the axiom (of set theory) $\bfV = \bfL$ (explained below), we construct a
Banach space with density character $\aleph_1$ such that every (linear
bounded) operator $T$ from $B$ to $B$ has the form $aI + T_1$, where $I$ is
the identity and $T_1$ has a separable range.  The axiom $\bfV = \bfL$ means that
all the sets in the universe are in the class $\bfL$ of sets constructible from
ordinals; in a sense this is the minimal universe.  In fact, we make use
of just one consequence of this axiom, $\diamondsuit_{\aleph_1}$ proved by
Jensen, which is widely used by mathematical logicians.
\bigskip

\noindent
(B76) \quad \underline{Independence of strong partition relations 
for small cardinals and the free} \underline{subset problem}, J.45(1980), 505-509.

We prove the consistency of ZFC + GCH with the following (assuming 
$\mathrm{con}(\mathrm{ZFC})$ + ``there are $\omega$ measurable cardinals''):
\mn
\begin{enumerate}
\item[$(*)$]  for any function $f$ from finite subsets of $\aleph_\omega$
to $\omega$ there are pairwise disjoint $S_n \subseteq \omega_{2n}$
$(1 \le n < \omega)$ satisfying $|S_n| = \aleph_{2n}$ such that: if
$u,v$ are finite subsets of $\aleph_\omega$ and $(\forall n)[|S_n \cap u| =
|S_n \cap v| \le 1]$ \then \, $f(u) = f(v)$.
\end{enumerate}
\mn
On the free subset problem we prove the following: $(*)$ implies that any
algebra with countably many functions and power $\aleph_\omega$ has a free
subset $A$ of power $\aleph_0$, i.e. $(\forall a \in A)$ ($a$ is not in the
subalgebra generated by $A \setminus \{a\}$).

We get similar independence results for $\aleph_\alpha > |\alpha|$, i.e.
a free subset of power $|\alpha|$, and prove we cannot get a free set of
power $|\alpha|^+$ in $\aleph_\alpha$.  Continued in \cite{Sh:124}.
\bigskip

\noindent
(B77) \quad \underline{Existentially-closed groups in $\aleph_1$ 
with special properties}, Bull. Greek Math. Soc. 18(1977), 17-27.

We prove: for any countable e.c. group (= existentially closed group) $G^*$,
there is an e.c. group $M \equiv {}_{\infty,\omega}G^*$ of cardinality
$\aleph_1$ such that:
\mn
\begin{enumerate}
\item[(i)]   $M$ is complete; i.e. every automorphism is inner
\sn
\item[(ii)]  (CH) $M$ has no uncountable abelian subgroup.
\end{enumerate}
\bigskip

\noindent
(B78) \quad \underline{Hanf number of omitting types for 
simple first-order theories}, JSL 44(1979), 319-324.

Let $T$ be a complete countable first-order theory such that every
ultrapower of a model of $T$ is saturated.  If $T$ has a model omitting a
type $p$ in every cardinality $< \beth_\omega$, then $T$ has a model omitting
$p$ in every cardinality.  There is also a related theorem, and an example
showing the $\beth_\omega$ cannot be improved.  Continued in [Sh:334].
\bigskip

\noindent
(B79) \quad \underline{On uniqueness of prime models}, JSL 44(1979), 215-220.

We prove there are theories (stable or countable) for which over every $A$
there is a prime model but it is not necessarily unique.  We also give a
simplified proof of the uniqueness theorem for countable stable theories.
\bigskip

\noindent
(B80) \quad \underline{A weak generalization of MA to higher cardinals}, 
IJM 30(1978), 297-306.

We generalize MA to $\aleph_1$-complete forcing notions satisfying a
strengthening of the $\aleph_2$-c.c. described in $(*)$ below, and prove its
consistency with ZFC + CH.  (The number of dense sets is $< 2^{\aleph_1}$,
of course).  Where
\mn
\begin{enumerate}
\item[$(*)$]   for any $p_i \in P$ (for $i < \aleph_2$) there are
regressive functions $f_n : \omega_2 \rightarrow \omega_2$, such that if
$\alpha,\beta < \aleph_1$, $\bigwedge\limits_{n < \omega} f_n(\alpha) =
f_n(\beta)$ \then \, $p_\alpha,p_\beta$ have a least upper bound.
\end{enumerate}
\mn
We get similar consistency results (with $ZFC + CH + ``2^{\aleph_1} =
2^{\aleph_2}$") like a uniformization property for suitable $\langle 
\eta_\delta:\delta \in S^2_1 \rangle$.

As a result we get the consistency of the decidability of the monadic theory
of $\omega_2$.
\bigskip

\noindent
(B81) \quad \underline{The consistency with CH of some consequences 
of Martin Axiom plus}\\ \underline{$2^{\aleph_0} < 2^{\aleph_1}$}, 
 31(1978), 19-33 (with U. Abraham and K. Devlin).

The main conclusion is the consistency with GCH of:
\mn
\begin{enumerate}
\item[$(*)(1)$]  for any ladder system $\langle \eta_\delta : \delta < \omega_1 \rangle$ 
($\delta$ limit, $\eta_\delta$ an $\omega$-sequence converging to $\delta$) 
and $c_\delta \in {}^\omega\omega$, for some
$h : \omega_1 \rightarrow \omega$ we have $(\forall \delta)(\exists^\infty k)
[h(\eta_\delta(k)) = c_\delta(k)]$
\sn
\item[$(*)(2)$]  if $G$ is a graph, $\omega_1$ its set of models, 
$$(\forall \alpha,\beta) \big[ \alpha + \omega \le \beta \rightarrow
(\exists^{< \aleph_0} \gamma < \alpha)\langle \gamma,\beta \rangle \in G \big]$$
then $G$ has chromatic number $\le \aleph_0$.
\end{enumerate}
\mn
The method the known iteration of forcing by Suslin trees following a work
of Jensen; and a general principle SAD implying $(*)$ is phrased, and
its consistency proved.
\bigskip

\noindent
(B82) \quad \underline{Models with second order properties, 
III: omitting types for $\bbL(Q)$}, Archive f. Math Logic, 21(1981), 1-11.

A known technique for building model a power $\aleph_1$ is by building an
increasing continuous chain $M_\alpha$ $(\alpha < \omega_1)$ of countable 
models, and in stage $\alpha$ we are allowing to promise to omit 
countably many types with parameters from $M_\alpha$ ``strongly omitted" by
$M_\alpha$.  We prove the parallel theorem with $\aleph_0,\aleph_1$
replaced by $\lambda,\lambda^+$ provided that $\diamondsuit_\lambda$ holds
(or $\lambda$ is strongly inaccessible).  However, we prove 
$\diamondsuit_\lambda$ holds for any successor $\lambda \ne \aleph_1$ if GCH
holds. 

The obstacle for straightforward generalization is preserving
the induction hypothesis in limit stages of cofinality $< \lambda$.

The main idea is to use $\diamondsuit_\lambda$, while constructing
$M_{\alpha +1}$ by approximation of power $< \lambda$, to ``guess" obstacles
in the future.
\bigskip

\noindent
(B83) \quad \underline{Existentially closed structures in 
the power of the continuum}, Ann. Pure Appl. Logic 26(1984), 123-148 (with Giorgetta)

We deal with three cases:
\mn
\begin{itemize}
\item  $G$ is a countable universal locally finite group
\sn
\item  $G$ is a countable existentially closed group,
\sn
\item  $G$ is a countable existentially closed division ring.
\end{itemize}
\mn
In each case $K = \{H : H$ is $\bbL_{\infty,\omega}$-equivalent to $G\}$ which
here means: every embedding of a finitely generated subalgebra of $G$ into
$H$ can be extended to an embedding of $G$ into $H$, and every countable
subalgebra of $H$ can be embedded into $G$.

We prove that $K$ has members of power $2^{\aleph_0}$; in fact, many quite
complicated members which e.g. do not have $\aleph_1$ pairwise commuting
elements.  The method is general: we build the model of power $2^{\aleph_0}$
as, essentially, an inverse limit of \\
$\langle M^n_u:u \in \clP(n) \rangle$, $M^n_u$ finitely generated, 
so the problems become finitary: for $N \subseteq N_0$, $N_1$ 
finitely generated, amalgamate $N_0,N_1$ over $N$ with some extra
requirements.  For those we have some criteria, but the main 
work is specific to each case.
\bigskip

\noindent
(B84) \quad \underline{On elementary equivalence of 
automorphism groups of Boolean Algebras,} \underline{downward Skolem-L\"owenheim 
theorems and compactness of related quantifiers},\\ JSL 45(1980), 265-283 (with M. Rubin).
\bigskip

\noindent
(B85) \quad \underline{A note on the normal Moore space conjecture} 
Can. J. Math., Vol.XXXI, 2(1979), 241-251 (with Devlin).

In this paper we show that if CH be assumed, then Jones' spaces are not 
normal and that the GCH does not lead to a positive solution to the Jones
conjecture.  A brief survey of the progress on the problem to date is also
included.  Jones' spaces are closely related to Aronszayn trees.
\bigskip

\noindent
(B86) \quad \underline{Souslin properties and Tree Topologies} 
 Bull. London Math Soc., (3), 39(1979), 537-552.
\bigskip

\noindent
(B87) \quad \underline{Classification theory for non-elementary classes. I.
The number of uncount-} \underline{able models of $\psi \in \bbL_{\omega_1, \omega}$}, 
Israel J. Math., 46(1983), 212-273.
\bigskip

\begin{tmt}\label{b87}
Assuming $\forall n < \omega$, $2^{\aleph_n} < 2^{\aleph_{n+1}}$; 
if $\psi \in \bbL_{\omega_1,\omega}$ ($\tau_\psi$ countable) has an
uncountable model then:
\mn
\begin{itemize}
\item  there exists $n$ positive such that $I(\aleph_n,\psi) = 2^{\aleph_n}$
\oor
\sn
\item  $\psi$ has models in every uncountable cardinality, and \Iff \,
$(\exists \lambda > \aleph_0)I(\lambda,\psi) = 1$ \then \, $(\forall \lambda
> \aleph_0)I(\lambda,\psi) = 1$.
\end{itemize}
\end{tmt}
\bigskip

As in \cite{Sh:48} we consider a class of atomic models $K$ of some first
order complete theory.

It is proved that it is enough to prove the Main Theorem for $K$.  Assuming
$2^{\aleph_0} < 2^{\aleph_1}$ and $I(\aleph_1,\psi) < 2^{\aleph_1}$ basic
stability machinery is introduced, i.e. a substitute to non-forking; and a
substitute to saturated models in elementary classes.

The key to classify the complexity of the class $K$ instead by stability,
to do it by generalized amalgamation properties (remember the role of
$\aleph_0$ - A.P. in \cite{Sh:48}).  The problem to find A.P.-s such that
the failure of those A.P.-s will imply existence of many
non-isomorphic models 
and if those generalized A.P.-s holds then to conclude existence of models
in higher cardinalities.

The paper logically is divided to three parts: first developing basic 
stability theory.  Second, introducing various amalgamation properties 
 proving some interrelations among them and proving that 
if all the A.P.-s holds them
$K$ has models in every cardinality and the Morley categoricity theorem for
$K$.  The third part is by combinatorial work to prove that the failure of
A.P. implies the existence of many non-isomorphic models.

We try to minimalize the use of set theoretical assumptions: the main result
of \cite{Sh:48} is proved from $2^{\aleph_0} < 2^{\aleph_1}$ only.  
Most parts 
of the last part of the paper can be proved from the assumption $(\forall n
< \omega)$ that $2^{\aleph_n} < 2^{\aleph n+1}$.

It is an open problem to prove the Main Theorem in ZFC only, see also
\cite{Sh:88}.  Continued in \cite{Sh:600}.
\bigskip

\noindent
(B88)\quad \underline{Classification theory for non-elementary classes. II.
Abstract elementary}\\ \underline{classes}, Lecture Notes in Math. 1292(1987) 419-497. 

We deal with a class of models $K$ all of the same similarity type, and a
partial order $<$ on $K$ (which stands e.g. for elementary submodels) the only
demand except the obvious is existence of $\lambda(K)$ which is a downward
Skolem-Lowenheim number, and $(K,<)$ satisfies Tarski-Vaught elementary chain
theorems.  It is proved from ZFC only that $I(\aleph_1,K) = 1 \Rightarrow
I(\aleph_2,K) \ge 1$ (this improves \cite{Sh:48}, \cite{Sh:87a}, \cite{Sh:87b}) and assuming
$2^{\aleph_0} < 2^{\aleph_1}$ we show that $1 \le I(\aleph_1,K) 
< 2^{\aleph_1}$ implies $I(\aleph_2,K) \ge 1$, the methods 
are classifying by the amalgamation
properties (as in \cite{Sh:48} and \cite{Sh:87a}, \cite{Sh:87b}).  
Most of the work is preparation to generalize the Main Theorem 
from \cite{Sh:87a}, \cite{Sh:87b} to this context.  
Continued in \cite{Sh:576}, \cite{Sh:600}. 
\bigskip

\noindent
(B89) \quad \underline{Boolean algebras with few endomorphisms}, 
Proc. AMS 14(1979), 135-142.

Any Boolean algebra has some trivial endomorphism - the identity, an
endomorphism into $\{0,1\}$ corresponding to an ultrafilter, and some combinations
of them.  We build (assuming $\diamondsuit_{\aleph_1}$ and then CH only) 
Boolean algebra of power $\aleph_1$, such that every endomorphic is 
definable from finitely many elements and ultrafilters.  
We also generalize a construction of Rubin of
some special Boolean Algebras in $\aleph_1$ to higher cardinals.  Continued
in \cite[Ch.I]{Sh:e}.
\bigskip

\noindent
(B91) \quad \underline{The structure of Ext$(G,\mathbb Z)$ and $\bfV = \bfL$}, 
Math Zeitschrift 162(1978), 39-50 (with H.L. Hiller and M.K. Huber).

Let $G$ be a torsion free abelian group, then it is well know that
$\Ext(G,\bbZ)$ is divisible, hence determine by its $p$-ranks ($p$ zero or
prime) $\nu_p(G)$.

We prove that for non-free $G$ (assuming $\bfV = \bfL$): $\nu_0(G)$ is 
$\min\{2^{|H|}:H$ a subgroup of $G$, $G/H$ free$\}$, hence $\nu_0(G)$ is a
successor cardinal and $\nu_p(G) \le \nu_0(G)$.

We determine the possible $\nu_p(G)$ when $\Hom(G,\bbZ) = 0$, and we
characterize the abelian group to which there exists co-Moore spaces; and
assert that instead $\bfV = \bfL$ it is enough to assume no stationary subset of a
regular cardinal is small.  Continued in \cite{Sh:314}.
\bigskip

\noindent
(B92) \quad \underline{Remarks on Boolean Algebras}, 
Algebra Universalis, 11(198), 77-89.

In \S4, we show that if $B$ satisfies the $\kappa$-chain condition,
$|B| > \lambda = \lambda^{< \kappa}$ then $B$ has large independent subsets,
e.g. of power $\ge \lambda^+$.  We can replace $\lambda^+$ by $\chi = 
\cf(\chi)$ if $(\forall \alpha < \chi)(|\alpha|^{< \kappa} < \lambda)$,
and find an independent subset $A'$ of $A \in [B]^\chi$ of cardinality 
$\chi$.
If $\chi$ is singular, $(\forall \alpha < \chi)(|\alpha|^{< \kappa} < 
\lambda)$ and $A \in [B]^{\chi^+}$ we can find an independent $A' \subseteq
A$ of cardinality $\chi$.

In fact, our results are almost complete.

We prove some theorems on Boolean Algebras.  In \S1, we prove that if $S$ is
a subset of a (Boolean Algebra) $B$, $|S|$ singular strong limit, then 
$S^- = \{a - b:a,b \in S\}$ has a subset of power $|S|$ which is a pie subset
(a subset no two elements of which are comparable).

In \S2 we prove the consistency of $ZFC + 2^{\aleph_0} > \aleph_1$ with
``there is a Boolean Algebra $B$ of cardinality $2^{\aleph_0}$, with no
uncountable pie subset, nor an uncountable chain."

In \S3, we prove that if $|B| = \lambda^+$, $\lambda = \lambda^{< \kappa}$, 
$B$ satisfies the $\kappa$-chain condition, then $B \setminus \{0\}$ is the
union of $\lambda$ ultrafilters (proper, of course).  We shall show in
\cite{Sh:126} that this is not necessarily true when $|B| = \lambda^{++}$, 
and other connected results.
\bigskip

\noindent
(B93) \quad \underline{Simple unstable theories}, Ann. Math. Logic, 19(1980), 177-203.

Being simple is a property of first order complete theories weaker than
stability.  We start their investigation, using a a test question asking if
$|T| < \kappa$, $\mu = \mu^\kappa < \lambda = \lambda^{|T|} \le 2^\mu$, 
$M \models T$, $\|M\| = \lambda$ implies $M$ has a $\kappa$-saturated 
elementary extension of cardinality $\lambda$.  For non-simple the answer 
is no, for simple theories the answer is consistently yes.  We start to 
develop a parallel of \cite[Ch.II,III]{Sh:a}.
\bigskip

\noindent
(B94) \quad \underline{Weakly compact cardinals: a combinatorial proof}, 
JSL 44(1979), 559.

We give here a direct purely combinatorial proof that weak compactness is
equivalent to a combinatorial property (2).  This property (2) is apparently
stronger, and from it, all other usual equivalent definitions and usual
properties of weakly compact cardinals can be deduced.  So this proof may be
useful for books which want to present weakly compact cardinals, but not
logic.
\bigskip

\begin{remark}\label{b94}
For $\mu$ the first inaccessible we can define a 
$\mu$-tree with no $\mu$-branch by:

\begin{equation*}
\begin{array}{clcr}
T = \bigl\{h:&\text{Dom } h \text{ an ordinal } \alpha < \mu,\ h(i) < 1+i, \\ 
  &\text{and for strong limit } i,j \in \text{ Dom } h,\ h(i) \ne h(j)\bigr\}.
\end{array}
\end{equation*}
\end{remark}

\begin{theorem}\label{a94.1}
For $\mu$ strongly inaccessible the following are equivalent:
\mn
\begin{itemize}
\item  $\mu$ is weakly compact, i.e., every $\mu$-tree has a $\mu$-branch
\sn
\item  for every family of functions $f_\alpha:\alpha \rightarrow \alpha \,
(\alpha < \mu)$ there is a function $f:\mu \rightarrow \mu$ such that:
$(\forall \alpha < \mu)(\exists \beta)\alpha \le \beta < \mu$ and $[f_\beta
\restriction \alpha = f \restriction \alpha]$.
\end{itemize}
\end{theorem}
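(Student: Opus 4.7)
The plan is to prove (1) $\Leftrightarrow$ (2) by tree-theoretic arguments, leaning on the equivalence of weak compactness with the tree property for $\mu$-trees (height $\mu$, levels of size $<\mu$) and, for one direction, with the compactness of $\bbL_{\mu,\mu}$ for $\le \mu$-sized theories.

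For (2) $\Rightarrow$ (1), I would argue by contradiction: suppose $T$ is a $\mu$-tree with no $\mu$-branch, and construct a family $\{f_\alpha\}$ witnessing (2) that is forced to produce a branch. WLOG $T \subseteq {}^{<\mu}\mu$, re-labeled so that a node $\eta$ at level $\beta$ has $\eta(\gamma) < |T_\gamma|$ (with $\eta(\gamma)$ coding the index of $\eta \restriction \gamma$ in some fixed enumeration of $T_\gamma$). By strong inaccessibility of $\mu$, the set $E = \{\alpha < \mu : |T_\gamma| < \alpha \text{ for all } \gamma < \alpha\}$ is a club; for any $\beta$ and $\alpha \in E$ with $\beta \ge \alpha$, the restriction $\eta \restriction \alpha$ of any $\eta \in T_\beta$ has values strictly below $\alpha$, so is a genuine function $\alpha \to \alpha$. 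For each $\alpha \in E$ pick $\eta_\alpha \in T_\alpha$; for general $\alpha < \mu$ set $\alpha_* = \sup(E \cap (\alpha+1))$ and define $f_\alpha : \alpha \to \alpha$ by $f_\alpha(\gamma) = \eta_{\alpha_*}(\gamma)$ for $\gamma < \alpha_*$ and $f_\alpha(\gamma) = 0$ for $\alpha_* \le \gamma < \alpha$ (so $f_\alpha = \eta_\alpha$ when $\alpha \in E$). Apply (2) to obtain $f : \mu \to \mu$. For each $\alpha \in E$, let $\beta \ge \alpha$ witness the condition and $\beta_* = \sup(E \cap (\beta+1))$; since $\alpha \in E \cap (\beta+1)$ we have $\beta_* \ge \alpha$, and $f_\beta \restriction \alpha = \eta_{\beta_*} \restriction \alpha$, which belongs to $T_\alpha$ by closure of $T$ under restriction. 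So $\{f \restriction \alpha : \alpha \in E\}$ is a totally ordered subset of $T$ meeting every level in $E$, hence a $\mu$-branch — contradiction.

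For (1) $\Rightarrow$ (2), I would consider the tree $T$ on $\{f_\beta \restriction \alpha : \alpha \le \beta < \mu\}$ under end-extension: a $\mu$-branch of $T$ is exactly an $f$ as required, and the height is $\mu$ since $f_\alpha \in T_\alpha$. The technical hurdle is that $|T_\alpha|$ is only known to be $\le \mu$ — since $f_\beta$ has range in $\beta$ rather than in $\alpha$, distinct $\beta \ge \alpha$ can produce pairwise distinct restrictions and no immediate cardinality bound $< \mu$ is available. To handle this I would pass to the equivalent formulation of weak compactness as the $\mu$-compactness of $\bbL_{\mu,\mu}$: in the language with a binary function symbol $\mathrm{Fam}$ encoding the family, a unary symbol $F$, and constants $c_\alpha$ for $\alpha < \mu$, assert (i) the diagram of the family (one axiom per triple with $f_\alpha(\gamma) = \delta$), and (ii) the single first-order axiom $\forall \alpha \exists \beta (\alpha \le \beta \wedge \forall \gamma (\gamma < \alpha \to F(\gamma) = \mathrm{Fam}(\beta,\gamma)))$. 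Every $< \mu$-subtheory is visibly satisfiable (take $F = \mathrm{Fam}(\beta_0, \cdot)$ with $\beta_0$ above all constants mentioned), so the full theory has a model, and a $\Pi^1_1$-indescribability / Löwenheim–Skolem extraction yields a standard model where $F$ restricts to the desired $f : \mu \to \mu$.

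The main obstacle is the bookkeeping in (1) $\Rightarrow$ (2), namely ensuring the model extracted from compactness has standard ordinals up to $\mu$, so that $F$ can be reinterpreted as an honest function $\mu \to \mu$ with values below $\mu$; this is precisely where the full strength of weak compactness (as opposed to mere inaccessibility) is required.
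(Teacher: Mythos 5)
Your direction (2) $\Rightarrow$ (1) is correct and essentially complete: the relabelling of a $\mu$-tree inside ${}^{<\mu}\mu$ so that nodes at level $\beta$ take values below $|T_\gamma|$ at coordinate $\gamma$, the club $E$ on which restrictions become honest functions $\alpha\to\alpha$, and the zero-padding to define $f_\alpha$ for $\alpha\notin E$ all work, and the extraction of a $\mu$-branch from $\{f\restriction\alpha:\alpha\in E\}$ is sound. (Note that the source here is only the abstract of [Sh:94]; the paper's own proof is advertised as ``purely combinatorial'', i.e.\ deliberately avoiding the logical machinery you invoke in the other direction, so even where your argument can be completed it is methodologically the opposite of what that paper does.)

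The direction (1) $\Rightarrow$ (2) has a genuine gap at the extraction step. You correctly observe that the naive tree $\{f_\beta\restriction\alpha:\alpha\le\beta<\mu\}$ can have levels of size $\mu$ (since $f_\beta\restriction\alpha$ takes values in $\beta$, not in $\alpha$), so the tree property does not apply directly. But the $\bbL_{\mu,\mu}$-theory you then write down --- the diagram of the family plus the single axiom $\forall\alpha\exists\beta(\alpha\le\beta\wedge\forall\gamma<\alpha\,F(\gamma)=\mathrm{Fam}(\beta,\gamma))$ --- does not yield the conclusion: a model of it may interpret the witnessing $\beta$ as a nonstandard element, in which case $\mathrm{Fam}(\beta,c_\gamma)$ is unconstrained by the diagram and $F(c_\gamma)$ need not equal any $c_\delta$; no function $\mu\to\mu$ can then be read off. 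Nor does the ``$\Pi^1_1$-indescribability extraction'' go through in its naive form: reflecting the $\Pi^1_1$ statement ``no $f$ as in (2) exists'' down to some $V_\lambda$ does not produce a contradiction, because the natural candidate $g=f_\lambda$ need not cohere with any $f_\beta$ for $\beta<\lambda$. What actually closes the gap is the extension property: code $\langle f_\alpha:\alpha<\mu\rangle$ as $R\subseteq V_\mu$, obtain a transitive $(X,\in,S)$ with $V_\mu\in X$ and $(V_\mu,\in,R)\prec(X,\in,S)$, let $\langle f'_a\rangle$ be the $S$-coded continuation of the family, and set $f=f'_\mu:\mu\to\mu$ (a genuine function by transitivity of $X$); then for each $\alpha<\mu$ the parameter $f\restriction\alpha$ lies in $V_\mu$ and elementarity, applied to the statement ``$\exists\beta\ge\alpha\,(f'_\beta\restriction\alpha=f\restriction\alpha)$'' which holds in $X$ with witness $\mu$, supplies a standard witness $\beta<\mu$. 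You should either prove the extension property from the tree property (this is the real content you are deferring, essentially the Hanf--Scott argument) or cite it explicitly; as written, the ``bookkeeping'' you flag in your final paragraph is not bookkeeping but the entire difficulty of this direction.
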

\bigskip

\noindent
(B95) \quad \underline{Canonization theorems and applications}, 
JSL, 45(1981), 345.

We prove canonization theorems generalizing Erd\H{o}s-Rado partition
theorems.  We introduce a very general partition relation which generalizes
$\lambda \rightarrow (\kappa)^\chi_\mu$ in many aspects.  We prove a
corresponding partition theorem (which works simultaneously for sequences of
cardinals).  E.g. assume $\langle \lambda_i:i < \alpha \rangle$ a sequence
of cardinals, $|\alpha| + \kappa < \lambda_i$, 
$\langle \mu_i = \beth_n (\lambda_i)^+ : i < \alpha \rangle$ is strictly 
increasing,  $F : \big[ \bigcup\limits_{i} \mu_i \big]^{< \omega} \rightarrow \theta$, 
then we can find $\langle A_i : i < \alpha \rangle$, 
$A_i \in [\mu_i \setminus \bigcup\limits_{j<i} \mu_j]^{\lambda^+_i}$ and $F$
is restricted to increasing sequences from 
$\bigcup\limits_{i < \alpha} A_i$ does not
depend on the last $n$ arguments.  Moreover, in the inductive proof if we
restrict how the sequences are distributed between the $A_i$'s, e.g.
to those of $\bar n$-kind, where $\bar a$ is of the $\bar n$-kind if
$$n_\ell = \Big| \Big\{k < \ell g(\bar a):\ell g(\bar n) - k+1 = 
\big\{i < \alpha : \varnothing \ne \bar a \cap \mu_i \setminus \bigcup\limits_{j<i} \mu_j \big\}\Big\} \Big|.$$  
If in the last one there is one element (e.g. $n_{\ell g(\bar n)-1}=1$, 
we can gain 2 places provided we increase the
function $\beth_n(\lambda_i)^+$ which mainly consists of $n$ exponentiation,
by some plusses.  If $\alpha < \omega$, we can gain more.  

As an application we complete the answer to the following question: 

Given a Hausdorff space $X$ of cardinality $\lambda$, characterize the pairs of
cardinals $(\lambda,\chi)$ such that $X$ has a subspace $Y$, of cardinality
$\chi$, which is discrete.  Also, generalize Hajnal's free subset theorem for
tuples instead of one-place functions.
\bigskip

\noindent
(B96) \quad \underline{Algebraically closed groups of large cardinality},
JSL 44(1979), 232-552 (with Martin Ziegler).

Let $M$ be a countable algebraically closed group, $\kappa$ an uncountable
cardinal.  We will prove in this paper the following theorems.

\begin{theorem}\label{a96.1}
There is an algebraically closed group $N$ of cardinality
$\kappa$ which is $\bbL_{\infty,\omega}$-equivalent to $M$.
\end{theorem}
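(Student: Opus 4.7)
The plan is to construct $N$ as the union of a continuous increasing chain $\langle N_\alpha : \alpha < \kappa \rangle$ of countable algebraically closed groups, each $\bbL_{\infty,\omega}$-equivalent to $M$, with cardinality $\kappa$ forced by adjoining a fresh element at each successor stage.

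First I would establish a preservation lemma: the union of a continuous $\subseteq$-increasing chain of algebraically closed groups is algebraically closed. Indeed, a finite system $\Sigma(\bar x)$ of equations and inequations over $N = \bigcup_\alpha N_\alpha$ has its parameters in some $N_\alpha$; any extension of $N$ in which $\Sigma$ has a solution is also an extension of $N_\alpha$, so by the algebraic closedness of $N_\alpha$ the system already has a solution in $N_\alpha \subseteq N$.

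The heart of the proof is the successor step, for which I would prove the following key lemma: if $H$ is a countable algebraically closed group with $H \equiv_{\infty,\omega} M$ and $a$ is any element in some group extension of $H$, then there exists a countable algebraically closed $H' \supseteq H$ with $a \in H'$ and $H' \equiv_{\infty,\omega} M$. The proof proceeds by a double back-and-forth: one builds $H'$ as an $\omega$-chain of finitely generated subgroups, interleaving (i) AC-closure steps, which at stage $n$ introduce a solution to the $n$-th solvable finite system over the subgroup constructed so far, with (ii) back-and-forth steps against $M$ that secure, for each newly introduced finitely generated subgroup of $H'$, a matching finitely generated subgroup of $M$ together with a coherent embedding. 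The key structural input is that $\bbL_{\infty,\omega}$-equivalence between algebraically closed groups is characterized by the collection of finitely generated subgroups embeddable into each side; this rests on the homogeneity of algebraically closed groups with respect to their finitely generated subgroups, via results of Macintyre and Neumann cited in the algebraic-closure literature (compare (B77), (B83)).

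Granted the key lemma, the transfinite construction is routine: set $N_0$ to be an isomorphic copy of $M$; at successor stages $\alpha+1$, apply the lemma to a fresh element $a_\alpha$ picked from an algebraically closed extension of $N_\alpha$, ensuring $|N_{\alpha+1}| \le |\alpha| + \aleph_0$ and $a_\alpha \in N_{\alpha+1} \setminus N_\alpha$; at limits, take unions. Then $N := \bigcup_{\alpha < \kappa} N_\alpha$ has cardinality $\kappa$ (cardinality grows by one at each successor), is algebraically closed by the preservation lemma, and is $\bbL_{\infty,\omega}$-equivalent to $M$: concatenating the stagewise back-and-forth systems between each $N_\alpha$ and $M$ gives a back-and-forth system between $N$ and $M$ at the level of finite tuples, which by Karp's theorem suffices.

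The main obstacle will be the key lemma, specifically sustaining $\equiv_{\infty,\omega} M$ while performing the AC-closure: each closure step threatens to introduce a new finitely generated subgroup whose isomorphism type is not realized inside $M$. Overcoming this requires the observation that because $M$ itself is algebraically closed, the isomorphism types of finitely generated subgroups arising in any AC-closure over a subgroup embeddable into $M$ are themselves already present in $M$; the back-and-forth must interleave the two processes delicately to exploit this symmetrically on both sides. The construction is carried out in ZFC, without any diamond-type set-theoretic hypothesis, because the closure being performed is purely algebraic.
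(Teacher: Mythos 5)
Your key lemma is false as stated, and this is the heart of the matter. If $a$ is \emph{any} element of \emph{any} group extension of $H$, then $\langle H,a\rangle$ may already contain a two-generator subgroup whose isomorphism type does not occur in $M$ (there are $2^{\aleph_0}$ isomorphism types of two-generator groups, while a countable $M$ realizes only countably many), and then no $H'\supseteq\langle H,a\rangle$ can be $\equiv_{\infty,\omega}M$, since $\bbL_{\infty,\omega}$-equivalence to a countable algebraically closed group forces every finitely generated subgroup to embed into $M$. The ``observation'' you offer to repair this --- that finitely generated subgroups arising in an a.c.\ closure of an $M$-embeddable group are already present in $M$ --- is also false: the a.c.\ closure is in no sense canonical, and a given countable group sits inside $2^{\aleph_0}$ pairwise non-isomorphic countable a.c.\ groups with different skeletons. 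Note moreover that a countable a.c.\ $H$ with $H\equiv_{\infty,\omega}M$ is already isomorphic to $M$, so your ``AC-closure steps'' are vacuous; the entire content of the successor step is (i) choosing $a$ so that every finitely generated subgroup of $\langle H,a\rangle$ embeds into $M$ (e.g.\ $a$ a free generator over $H$, which requires knowing that the skeleton of an a.c.\ group is closed under free products, via the Kurosh subgroup theorem), and (ii) closing $\langle H,a\rangle$ up to an a.c.\ group \emph{without enlarging the skeleton}. Neither point is addressed, and (ii) is exactly where the work lies.

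There are two further gaps. Your key lemma is stated only for countable $H$, but from stage $\omega_1$ onward the chain members are uncountable, so the induction cannot proceed as written; and the final step, ``concatenating the stagewise back-and-forth systems,'' does not work, because back-and-forth systems witnessing $N_\alpha\equiv_{\infty,\omega}M$ for different $\alpha$ need not cohere into one system for the union (the forth-step for a new element $c\in N_\beta$ is only available for pairs lying in the $\beta$-th system). The standard repair is to replace back-and-forth systems by the characterization of $\{N:N\equiv_{\infty,\omega}M\}$ recorded in (B83) --- every countable subgroup of $N$ embeds into $M$ and every embedding of a finitely generated subgroup of $M$ into $N$ extends to an embedding of $M$ into $N$ --- which \emph{is} preserved under unions of chains, using the conjugacy of isomorphic finitely generated subgroups in a.c.\ groups to glue partial embeddings. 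Be aware also that this chain strategy is not the route of (B96): there the construction is parallel to Ehrenfeucht--Mostowski models, building $N$ over a set of indiscernibles controlled by centralizers, which yields cardinality $\kappa$ and the skeleton of $M$ in one stroke and, unlike the chain argument, also delivers Theorems \ref{a96.2}--\ref{a96.4}.
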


\begin{theorem}\label{a96.2}
There is an algebraically closed group $N$ of cardinality
$\kappa$ which is $\bbL_{\infty,\omega}$-equivalent to $M$, and contains a free
abelian group of cardinality $\kappa$.
\end{theorem}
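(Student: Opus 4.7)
The plan is to adapt the construction underlying Theorem \ref{a96.1}, which (presumably) produces $N$ as a union of an increasing continuous chain of length $\kappa$ of countable algebraically closed groups, each $\mathbb{L}_{\infty,\omega}$-equivalent to $M$, with inclusions that are $\mathbb{L}_{\infty,\omega}$-elementary. The enhancement in \ref{a96.2} is to simultaneously single out a sequence $(a_\alpha : \alpha < \kappa)$ that generates a free abelian subgroup.

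First I would fix a chain $(N_\alpha : \alpha < \kappa)$ of countable algebraically closed groups with $N_0$ a copy of $M$, together with elements $a_\alpha \in N_{\alpha+1}$, maintaining the induction hypotheses:
\begin{enumerate}
\item[(i)] $N_\alpha$ is countable and algebraically closed;
\item[(ii)] for $\beta < \alpha$ the inclusion $N_\beta \hookrightarrow N_\alpha$ is $\mathbb{L}_{\infty,\omega}$-elementary, and each $N_\alpha$ is $\mathbb{L}_{\infty,\omega}$-equivalent to $M$;
\item[(iii)] the elements $\{a_\beta : \beta < \alpha\}$ generate a free abelian subgroup $A_\alpha$ of $N_\alpha$.
\end{enumerate}
Limit stages are handled by union; (i) uses that a directed union of countable algebraically closed groups along $\mathbb{L}_{\infty,\omega}$-elementary inclusions is algebraically closed, and (ii) is preserved under such unions. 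The successor step is where work is needed. Given $N_\alpha$ and the countable abelian subgroup $A_\alpha$, form the amalgamated product $N_\alpha \ast_{A_\alpha} (A_\alpha \oplus \mathbb{Z}t)$, which exists because $A_\alpha$ is a common subgroup, and embed it into a countable algebraically closed extension $N_{\alpha+1}$ of $N_\alpha$ by the usual chain construction for e.c. groups. Set $a_\alpha$ to be the image of $t$; then $a_\alpha$ commutes with every $a_\beta$, $\beta < \alpha$, and is $\mathbb{Z}$-independent from them by the normal form for amalgamated products.

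The hard part will be ensuring, simultaneously with the freeness of $\{a_\beta\}$, that the extension $N_\alpha \hookrightarrow N_{\alpha+1}$ remains $\mathbb{L}_{\infty,\omega}$-elementary so that the union is $\mathbb{L}_{\infty,\omega}$-equivalent to $M$. Here I would rely on the back-and-forth machinery already developed for \ref{a96.1}: one organizes the construction with bookkeeping so that at stage $\alpha$ we both (a) add the generic commuting element $t = a_\alpha$ via the amalgam and (b) process the next task from the back-and-forth enumeration, interleaving tasks so that the resulting isomorphism system between countable substructures of $N_{\alpha+1}$ and of $M$ continues to witness $\mathbb{L}_{\infty,\omega}$-equivalence. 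The crucial point is that the amalgam $N_\alpha \ast_{A_\alpha}(A_\alpha \oplus \mathbb{Z}t)$ is itself $\mathbb{L}_{\infty,\omega}$-elementarily extended by $N_\alpha$ (after suitable realization inside an algebraically closed group), since any back-and-forth partial isomorphism for $N_\alpha$ extends to one sending $t$ to any sufficiently generic commuting element on the other side, which exists in the uncountably rich algebraically closed companion.

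Finally, $N = \bigcup_{\alpha < \kappa} N_\alpha$ has cardinality $\kappa$, is algebraically closed (as a directed union of countable algebraically closed groups along elementary inclusions), is $\mathbb{L}_{\infty,\omega}$-equivalent to $M$ by (ii) and preservation under unions, and the subgroup generated by $\{a_\alpha : \alpha < \kappa\}$ is free abelian of rank $\kappa$ by (iii) and continuity. The main obstacle, as indicated above, is the verification that adjoining a generic commuting element preserves $\mathbb{L}_{\infty,\omega}$-equivalence; this is the one place where the proof of \ref{a96.2} diverges non-trivially from that of \ref{a96.1} and where the specific structure of algebraically closed groups (existence of rich centralizers and abundance of automorphisms fixing any finitely generated subgroup) must be invoked.
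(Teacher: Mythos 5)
There is a genuine gap, and in fact an internal inconsistency. You fix ``a chain $(N_\alpha:\alpha<\kappa)$ of \emph{countable} algebraically closed groups'' while maintaining (iii) that $\{a_\beta:\beta<\alpha\}$ generates a free abelian subgroup of $N_\alpha$; once $\alpha\ge\omega_1$ these two demands contradict each other, since $N_\alpha$ would have to contain $|\alpha|$ many independent elements. The fix ($|N_\alpha|\le|\alpha|+\aleph_0$) is obvious, but it destroys the tools you lean on: you can no longer argue via ``countable e.c.\ groups that are $\bbL_{\infty,\omega}$-equivalent are isomorphic,'' and the back-and-forth systems you invoke are now between structures of different uncountable cardinalities. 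More seriously, the step you yourself flag as ``the hard part'' is never actually carried out, and it is exactly where the danger lies. For e.c.\ groups, $H\equiv_{\infty,\omega}M$ amounts to: every countable subgroup of $H$ embeds into $M$, and every embedding of a finitely generated subgroup of $M$ into $H$ extends to an embedding of all of $M$. When you form $N_\alpha\ast_{A_\alpha}(A_\alpha\oplus\bbZ t)$ (note $A_\alpha$ is infinitely generated from stage $\omega$ on) and then close off to an e.c.\ group ``by the usual chain construction,'' the uncontrolled closure can introduce finitely generated subgroups not embeddable into $M$ (e.g.\ with word problem not reducible to $M$'s skeleton), which kills $\bbL_{\infty,\omega}$-equivalence. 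Saying that a ``sufficiently generic commuting element exists in the uncountably rich algebraically closed companion'' presupposes the conclusion rather than proving it.

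This is also not the route the paper takes. The stated method is an Ehrenfeucht--Mostowski-type construction using centralizers of elements: $N$ is generated over a countable template by a skeleton of indiscernibles chosen inside a centralizer, so that the skeleton commutes and is free abelian of rank $\kappa$, while every finitely generated subgroup of $N$ sits inside the subgroup generated by the base and finitely many indiscernibles and hence embeds into a fixed countable group $\equiv_{\infty,\omega}M$. That is what makes the preservation of the $\bbL_{\infty,\omega}$-class automatic (and what scales up to Theorems \ref{a96.3} and \ref{a96.4}), precisely the point your chain-plus-amalgamation scheme leaves unresolved.
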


\begin{theorem}\label{a96.3}
There are 
$2^\kappa$ non-isomorphic algebraically closed groups of 
cardinality $\kappa$ which are $\bbL_{\infty,\omega}$-equivalent to $M$.
\end{theorem}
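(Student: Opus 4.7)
The plan is to build, for each stationary set $S$ drawn from a family of size $2^\kappa$, a model $N_S$ that is $\bbL_{\infty,\omega}$-equivalent to $M$ and of cardinality $\kappa$, together with a purely isomorphism-invariant gadget $\Gamma(N)$ such that $\Gamma(N_S) = S$ modulo an appropriate ideal. Since distinct equivalence classes of $S$ then force distinct $\Gamma$, we obtain $2^\kappa$ pairwise non-isomorphic models. First assume $\kappa$ is regular uncountable; write $S^\kappa_\omega = \{\delta < \kappa : \cf(\delta) = \omega\}$, which carries $2^\kappa$ stationary sets modulo the club filter.

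For the construction, refine the method used to prove Theorem \ref{a96.1}. Build a continuous increasing chain $\langle A^S_\alpha : \alpha < \kappa \rangle$ of countable subgroups of a large ambient algebraically closed group, each $A^S_\alpha$ $\bbL_{\infty,\omega}$-equivalent to $M$ (so already algebraically closed and isomorphic to $M$), with $N_S := \bigcup_{\alpha<\kappa} A^S_\alpha$. At every successor step and at every limit $\delta$ of uncountable cofinality we extend freely, preserving the $\bbL_{\infty,\omega}$-type. The decisive step is at $\delta \in S^\kappa_\omega$: given any $\omega$-cofinal sequence $\langle \alpha_n : n<\omega \rangle$ approaching $\delta$, the union $\bigcup_n A^S_{\alpha_n}$ need not be algebraically closed inside $N_S$, and there is genuine choice in how to extend it inside $A^S_\delta$. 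If $\delta \in S$ we force a prescribed ``large'' configuration — for instance, we realize over $\bigcup_n A^S_{\alpha_n}$ a specified existential type that witnesses a particular embedding phenomenon (say, an element whose centralizer together with the chain exhibits a given finitary pattern); if $\delta \notin S$ we do the opposite, keeping $A^S_\delta$ ``tight'' over its predecessors. Throughout, one uses the amalgamation and extension properties of countable ECGs to keep $A^S_\alpha \cong M$ at every stage.

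For the invariant, given any $N$ that is $\bbL_{\infty,\omega}$-equivalent to $M$ and has cardinality $\kappa$, set $\Gamma(N)$ equal, modulo clubs, to $\{\delta \in S^\kappa_\omega : \text{for some (equivalently, every) filtration } \langle B_\alpha : \alpha<\kappa \rangle \text{ of } N \text{ by countable } \bbL_{\infty,\omega}\text{-elementary submodels, the prescribed configuration is realized at stage } \delta\}$. The point is that the property defining membership in $\Gamma(N)$ is intrinsic (it is expressible by an $\bbL_{\infty,\omega}$-sentence about the structure of $N$ over countable pieces approaching $\delta$), so $\Gamma$ is an isomorphism invariant modulo clubs. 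By construction $\Gamma(N_S) = S$ mod clubs, finishing the regular case.

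The main obstacle, and the bulk of the technical work, is verifying that the constructed local configuration at stage $\delta$ is both (a) achievable in countably many steps inside an ECG, so that $A^S_\delta$ can still be arranged isomorphic to $M$, and (b) rigid enough that no isomorphism can disguise its presence or create it spuriously at a club of other points. This forces a careful analysis of which finitary patterns in $M$ are ``detectable'' from within the whole $N_S$, drawing on the same amalgamation finesse used in Theorems \ref{a96.1}--\ref{a96.2}. For singular $\kappa$, one chooses a cofinal sequence $\langle \kappa_i : i < \cf(\kappa) \rangle$ of regular cardinals below $\kappa$, applies the above construction inside a $\kappa_i$-sized ``block'' for each $i$, and combines them by a free-product-with-amalgamation style construction inside an ambient ECG; the product of the $2^{\kappa_i}$ many choices at each block yields $2^\kappa$ pairwise non-isomorphic models, the invariants at different blocks being distinguished by their cardinality.
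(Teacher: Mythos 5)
There is a genuine gap, and it sits exactly where you place ``the bulk of the technical work'': you never exhibit the configuration whose presence at $\delta$ is supposed to code $\delta\in S$, and for algebraically closed groups the obvious candidates provably fail. Your suggestion --- realize over $\bigcup_n A^S_{\alpha_n}$ ``a specified existential type'' witnessing some finitary pattern --- cannot serve as a switch: in an algebraically closed group every finite system of equations and inequations over a subgroup that is solvable in some extension is already solvable in the group itself, and every isomorphism between finitely generated subgroups is induced by conjugation. Hence any existential, finitely based configuration over $\bigcup_n A^S_{\alpha_n}$ is realized at \emph{every} $\delta$ of countable cofinality in \emph{every} model of the class; the switch has no ``off'' position. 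The invariant would have to be an omitting-type or cofinality-type phenomenon, and arranging its \emph{failure} at $\delta\notin S$ while keeping $A^S_\delta$ algebraically closed and isomorphic to $M$ --- and keeping later stages from creating it retroactively --- is the whole theorem, not a verification to be deferred. Your stated reason for isomorphism-invariance is also self-undermining: if membership of $\delta$ in $\Gamma(N)$ were ``expressible by an $\bbL_{\infty,\omega}$-sentence'' (with finitely many parameters from $B_\delta$), it would be preserved by the back-and-forth system witnessing $N\equiv_{\infty,\omega}M$, so all the $N_S$ would get the same $\Gamma$. What you actually need is only that the invariant is computed from a filtration and that any two filtrations agree on a club; that part is standard, but it presupposes the unexhibited configuration.

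The paper takes a different route precisely to get around this homogeneity obstacle: it builds analogues of Ehrenfeucht--Mostowski models, generating $N$ over a skeleton indexed by a linear order (or tree) of cardinality $\kappa$ so that the index structure is recoverable from the group via centralizers of skeleton elements; the $2^\kappa$ non-isomorphic models then come from the usual supply of $2^\kappa$ pairwise contradictory orders, as in the non-structure arguments of (B12), (B55), (B136). In that approach the invariant is the bi-embeddability type of the recovered order, not a stationary set read off a filtration. Finally, your singular case is also incomplete: a free product with amalgamation of algebraically closed blocks is not algebraically closed, and closing it off threatens to erase the block invariants, so ``distinguished by their cardinality'' needs an actual argument.
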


\begin{theorem}\label{a96.4}
There is an algebraically 
closed group $N$ of cardinality
$\kappa$ which is $\bbL_{\infty,\omega}$-equivalent to $M$ and satisfies: every
subgroup of $N$ of uncountable regular cardinality contains a free subgroup
of the same cardinality.
\end{theorem}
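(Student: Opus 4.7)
The plan is to adapt the inverse-limit-of-finite-approximations machinery sketched in (B83) to the algebraically closed group setting, tagging generators so that a $\Delta$-system argument on any large subgroup extracts a free subgroup of the same size. First I would fix a countable algebraically closed $M$ and build $N$ as the union of a coherent system $\langle M^n_u : u \in \mathscr{P}(n), n < \omega \rangle$ indexed by finite subsets of $\kappa$ (or of suitable cubes), where each $M^n_u$ is finitely generated, the inclusions $M^n_u \subseteq M^{n+1}_v$ ($u \subseteq v$) are chosen to amalgamate the approximations. New generators are introduced ``freely'' subject only to the relations forced by amalgamating for the sake of algebraic closedness. The pertinent fact is that in the class of algebraically closed groups, finitely generated subgroups amalgamate with enough flexibility to realize, at each stage, every finitary existential formula with parameters from the current approximation, so the limit $N$ will be algebraically closed.

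The second step is the $\bbL_{\infty,\omega}$-equivalence with $M$. Since every finitely generated subgroup of $N$ arises inside some $M^n_u$ and hence embeds into $M$, and conversely every finitely generated subgroup of $M$ can be realized inside some $M^n_u$ by algebraic closedness, a standard back-and-forth between finitely generated subalgebras (exactly of the style used in (B83) to establish $K = \{H : H \equiv_{\infty,\omega} G\}$ in the three cases there) yields the equivalence.

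For the freeness property, let $H \le N$ have uncountable regular cardinality $\mu \le \kappa$. Pick $\mu$ distinct elements $h_\alpha \in H$. Each $h_\alpha$ lives in some $M^{n(\alpha)}_{u(\alpha)}$, hence has a finite ``generator support'' $s(\alpha) \subseteq \kappa$. By the $\Delta$-system lemma together with regularity of $\mu$ and a pressing-down/Fodor-style selection, I would thin to $S \subseteq \mu$ of size $\mu$ such that the $s(\alpha)$ form a $\Delta$-system with root $r$, the ``fresh'' parts $s(\alpha) \setminus r$ are pairwise disjoint, and the word expressing $h_\alpha$ over these generators is the same up to relabeling. A further thinning lets me replace $h_\alpha$ by a related element that depends only on the fresh generators (for instance, by conjugating/multiplying out the common root); call these $h_\alpha^*$. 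The design of the construction — generators being introduced freely except for relations forced by the finite amalgamations — will give that $\{h_\alpha^* : \alpha \in S\}$ satisfies no nontrivial relation in $N$, so it freely generates a free subgroup of $H$ of cardinality $\mu$.

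The main obstacle is the last sentence of the previous paragraph: one must prove a genuine ``no collapse across disjoint supports'' statement inside the algebraically closed group $N$, despite the many amalgamation steps that were performed to secure algebraic closedness. This requires a normal-form or Britton-lemma style analysis at each amalgamation step, showing that the relations added only couple generators belonging to a common $M^n_u$, so fresh generators belonging to disjoint supports remain independent in the limit. Organizing the amalgamation steps so that this normal form is maintainable, and then verifying it, is the technical heart of the proof; once that is in hand, both the freeness conclusion and, reading it backwards, the non-triviality needed for producing $2^\kappa$ pairwise non-isomorphic models (Theorem~\ref{a96.3}) come out of the same construction.
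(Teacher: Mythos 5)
Your construction scheme already diverges from the one the paper indicates: the abstract of (B96) says the models are built ``parallel to Ehrenfeucht--Mostowski models, using things like centralizers of elements'', i.e.\ $N$ is generated over a skeleton indexed by a linear order of cardinality $\kappa$ whose structure is coded by centralizers, whereas you import the $\langle M^n_u : u \in \clP(n)\rangle$ inverse-limit machinery of (B83). That machinery is tailored to producing models of cardinality exactly $2^{\aleph_0}$ (the limit ranges over the branches of the system indexed by $\clP(n)$, $n<\omega$, which is precisely why ``the problems become finitary''); re-indexing by $[\kappa]^{<\omega}$ turns it into a direct limit and discards the feature that made the amalgamation analysis finitary, so for $\kappa > 2^{\aleph_0}$ you do not actually possess the construction you are appealing to. This matters for the later steps, because the freeness argument leans on uniformity across the index set (``the word expressing $h_\alpha$ is the same up to relabeling''), which is exactly the indiscernibility an EM-skeleton supplies and a bare amalgamation tower does not.

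The more serious gap is in the final step. First, after the $\Delta$-system thinning you replace $h_\alpha$ by an element $h_\alpha^*$ ``depending only on the fresh generators''; but $h_\alpha^*$ need not lie in $H$, and the theorem demands a free subgroup \emph{of} $H$. You must show that the original elements $h_\alpha$, $\alpha\in S$ --- each still involving the common root $r$ --- themselves generate a free group, and elements of the form $\tau(\bar a_r,\bar b_\alpha)$ with a shared root can satisfy relations even when the fresh parts $\bar b_\alpha$ are independent; this is the normal-form statement that actually has to be proved. Second, the slogan that generators with disjoint supports ``satisfy no nontrivial relation because the construction is as free as possible'' cannot be a formal consequence of genericity: Theorem~\ref{a96.2} produces an algebraically closed $N$ in the very same class, $\bbL_{\infty,\omega}$-equivalent to $M$, containing a free \emph{abelian} subgroup of cardinality $\kappa$, and such a subgroup contains no uncountable free subgroup at all. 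So the conclusion of Theorem~\ref{a96.4} depends delicately on exactly which relations the amalgamation steps introduce, and the Britton-lemma analysis you defer to the end is the entire content of the proof rather than a verification; in the paper's approach this is where the concrete presentation of the skeleton (the centralizer coding) is used to run a ping-pong argument for the unmodified $h_\alpha$.
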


The method is to construct things parallel to Ehrenfeucht-Mostowski models,
using things like centralizers of elements.
\bigskip

\noindent
(B97) \quad \underline{Unordered types of ultrafilters}, 
Topology Proc. 3(1979), 199-204 (with M.E. Rudin).

It is proved in ZFC that there are the maximal number of incomparable
ultrafilters on $\omega$, in the Rudin-Kaisler ordering (and similarly for
other cardinals).
\bigskip

\noindent
(B98) \quad \underline{Whitehead groups may not be free even assuming 
CH,II} 
\smallskip

\noindent
(A)  We prove the consistency of $ZFC + 2^{\aleph_0} = 2^{\aleph_1} +$ club
(i.e. $\clubsuit$), where club is the following weakening of diamond:
\mn
\begin{enumerate}
\item[$(*)$]  there is $\langle A_\delta:\delta < \omega_1 \rangle$, 
$A_\delta$ an unbounded subset of $\delta$ such that for any unbounded
$A \subseteq \omega_1$, $\{\delta:A_\delta \subseteq A\}$ is stationary. 
(Under CH, club and diamond were known to be equivalent).
\end{enumerate}
\mn
The method is to start with a model of ZFC + GCH, blow up $2^{\aleph_1}$ and
then collapse $\aleph_1$. 

\noindent
(B)  For simplicity let $\Phi$ be a family of $\omega$-sequences from a set
called $\Dom(\Phi)$; $(\Phi,h)$ has the uniformization property if for any
functions $f_\eta:\Rang(\eta) \rightarrow$ ordinals satisfying
$f_\eta(i) < h(i)$, there is $f:\Dom(\Phi) \rightarrow$ ordinals, such
that for every $\eta \in \Phi$ for all but finitely many $n$, 
$f(\eta(n)) = f_\eta(n)$.

We investigate such properties.
\medskip

\noindent
(B1)  We prove that for a stationary $S \subseteq \omega_1$ and $\eta_\delta$
converging to $\delta$, then property ``$(\langle \eta_\delta:\delta \in S
\rangle,2)$ has the uniformization property" does not depend on $S$ only: in
fact, seemingly trivial changes in $\eta_\delta$ can change its truth value.

\mn
(B2)  We prove that if in the weak diamond (see \cite{Sh:65}), restricting
ourselves to a stationary co-stationary $S$, replacing 2 by 3 give a
non-equivalent statement.

\mn
(B3)  We prove that $\Dom(\Phi)$ may be $\omega,\Phi$ uncountable but still
$(\Phi,2)$ has the uniformization property, i.e. this is consistent with
$ZFC + 2^{\aleph_0} = 2^{\aleph_1}$.   Though this looks like a consequence
of $MA + 2^{\aleph_0} > \aleph_1$, it in fact contradicts MA.
So the result says that there are infinite $A_\alpha \subseteq \omega$ for
$\alpha < \omega_1$ almost disjoint (i.e. $\alpha < \beta \Rightarrow
|A_\alpha \cap A_\beta| < \aleph_0$) and for any $h_\alpha \in
{}^{(A_\alpha)}2$ for $\alpha < \omega_1$ there is $h \in {}^\omega 2$ such
that $\alpha < \omega_1 \Rightarrow h_\alpha \subseteq^* h$.  In fact we
have a condition on $\langle A_\alpha:\alpha < \alpha^* \rangle$ preserved
by finite support iteration such that it guarantees that the natural forcing
of adding $h$ for any given $\bar h = \langle h_\alpha:\alpha < \alpha^*
\rangle$ satisfies it.
\bigskip

\noindent
\underline{Group Theory}:

\noindent
(C) We prove that occurance of non-free Whitehead groups, and $(\Phi,2)$
with the uniformization property $\Phi$ ``non-trivial" are closely related
(for power $\aleph_1$-equivalent).  Hence by (B1), the property of being
Whitehead is quite delicate, maybe depending on seemingly trivial changes.

We prove a set theoretic result which shows that for many non-free $G$,
$\Ext(G,\bbZ)$ is large (compare with \cite{Sh:91}).
\bigskip

\noindent
(B99) \quad \underline{Equi-consistency results}, 
Notre Dame J. Formal Logic 26(1985), 178-188 (with L. Harrington).

There are two topics: 

\noindent
\textbf{MA and measurability and the Baire property}:  The following are
equi-consistent:
\mn
\begin{itemize}
    \item  ZFC + there is a weakly compact cardinal
\sn
    \item  ZFC + MA for every real $r$, $\aleph^{L[r]}_1 < \aleph_1$
\sn
    \item  ZFC + MA + every $\Delta^1_3$-set of reals is measurable
\sn
    \item  ZFC + MA + every $\Delta^1_3$-set of reals has the Baire property.
\end{itemize}
\mn
We can replace $\Delta^1_3$ by first order definable with a real and ordinal
parameter.
\bigskip

\noindent
\textbf{Reflection of stationary sets:}  The following are equi-consistent:
\mn
\begin{itemize}
\item  ZFC + there is a Mahlo cardinal
\sn
\item  ZFC + GCH + every 
stationary $S \subseteq S^2_0$ has an initial segment stationary.
\end{itemize}
\bigskip

\noindent
(B100) \quad \underline{Independence results}, JSL 45(1980), 563-573.

\noindent
\textbf{THE METHOD:}  We introduce the oracle chain condition which (for $\aleph_1$)
helps us to use iterated forcing of length $\aleph_2$, each step increasing
the set of reals, by enabling us to omit types (i.e. to promise no real
satisfying $\aleph_1$ conditions will be added later on).

We then introduce proper forcing, which is a very general condition
guaranteeing $\aleph_1$ is not collapsed and is preserved by countable
support iteration.

In fact we also introduce a blending of the two.

\noindent
\textbf{THE RESULTS:}  We prove the consistency of: $2^{\aleph_0} = \aleph_2$ and:
\mn
\begin{itemize}
\item  there is a universal linear order of power $\aleph_1$
\sn
\item  there are countable complete $T \subseteq T_1$, $T$ is not
$\aleph_0$-stable, not superstable, but still there is a unique $L(T)$-reduct
of a model of $T_1$ of power $\aleph_1$.
\end{itemize}
\mn
The results are complimentary to \cite[Ch.VII]{Sh:a}.  More see \cite{Sh:262}.
\bigskip

\noindent
(B101) \quad \underline{The theorems of Beth and Craig in 
abstract model theory. II. Compact}\\ \underline{Logics}, 
Archive Math Logik 21(1980), 13-36 (with J.A. Makowski).

Various compact logics such as stationary logic, positive logic, logics with
various cardinality quantifiers and cofinality quantifiers are studied.
Counterexmples to the theorems of Beth and Craig are given.  Back and forth
arguments are studied for the first two logics, transfer theorems presented
for positive logic and a new compactness proof for the cofinality quantifier
is given.
\bigskip

\noindent
(B102) \quad \underline{Forcing with stable posets}, JSL 47(1982), 37-42 (with U. Abraham).
\bigskip

\noindent
(B103) \quad \underline{On partitions of the real line}, 
IJM 32(1979), 299-304 (with D.H. Fremlin).

We proved that the real line is not necessarily the disjoint union of
$\aleph_1$ non-empty $G_\delta$-sets.  
This follows from known results and: if the real
line can be partitioned into $\kappa$ sets which are $G_\delta$-sets
($\kappa$ uncountable) then the real line can be covered by $\kappa$ nowhere
dense closed sets.
\bigskip

\noindent
(B104) \underline{The $\aleph_2$-Souslin hypothesis}, 
Trans. AMS 264(1981), 411-419 (with R. Laver).

We prove, assuming con(ZFC + there is a weakly compact cardinal) the
consistency of ``$\mathrm{ZFC} + \mathrm{CH} + 2^{\aleph_1} = 2^{\aleph_2} = \aleph_{3}$
+ there is no $\aleph_2$-Souslin tree."
\bigskip

\noindent
(B105) \quad \underline{On uncountable abelian groups}, IJM 32(1979), 311.

In \S1 we characterize the Whitehead groups of power $< 2^{\aleph_0}$,
assuming Martin Axiom: they are the $\aleph_1$-free groups satisfying
possibility II or III from \cite{Sh:44}; and, equivalently, they are
$\aleph_1$-coseparable or equivalently $\Ext(-,\bbZ_\omega)=0$.

In \S2 we construct an $\aleph_1$-free group satisfying possibility II which
is not strongly $\aleph_1$-free.  Hence $MA + 2^{\aleph_0} > \aleph_1$ 
implies there is a Whitehead group which is not strongly $\aleph_1$-free.

We also prove (assuming $\bfV = \bfL$ or even $2^{\aleph_0} < 2^{\aleph_1}$) that
there is a strongly $\aleph_1$-free, separable, not
$\aleph_1$-separable group of cardinality $\aleph_1$.  
At last we construct an $\aleph_2$-free (hence
strongly $\aleph_1$-free) non-separable, non-Whitehead group of cardinality
$2^{\aleph_1}$.  The method is using a tree which imitates
$\diamondsuit_{\aleph_1}$ in a sense.

In \S3 we deal with hereditarily separable groups.  If $\bfV = \bfL$ (or every
stationary subset of a regular cardinal is not small they are just the free
groups).  (This strengthens the theorem: if $\bfV = \bfL$ every Whitehead group is
free.)  But $MA + 2^{\aleph_0} > \aleph_1$ implies there are non-Whitehead,
hereditarily separable groups of cardinality.  We also prove, assuming
$2^{\aleph_0} < 2^{\aleph_1}$, that any hereditarily separable group is
strongly $\aleph_1$-free (a little more, in fact).
\bigskip

\noindent
CORRECTIONS: Omit 3.3,(iii) and 3.6.
\bigskip

\noindent
(B106) \quad \underline{Martin axiom does not imply that 
every two $\aleph_1$-dense sets of reals are}\\ \underline{isomorphic}, 
IJM 38(1981), 161-176 (with U. Abraham).

\textbf{RESULTS:}  We prove the consistency of the following with
ZFC + ``$2^{\aleph_0} = \aleph_2$" (assuming con(ZFC), of course). 

\noindent
1) There are two $\aleph_1$-dense sets of reals which cannot be made
isomorphic by any c.c.c. forcing notion (so \wilog\  MA holds). 

\noindent
2) There is an $\aleph_1$-dense set of reals such that any $f:A \rightarrow
A$ is order preserving on some uncountable $A' \subseteq A$. 

\noindent
3) For every $A \subseteq R$ of power $\aleph_1$, any one-to-one function
$f:A \rightarrow R$ is the union of countably many monotonic
functions. 

\noindent
4) MA + there are entangled sets of reals (defined there).

\textbf{METHOD:} A basic fact is that in order to make two $\aleph_1$-dense subsets
of $R$ isomorphic, there is a reasonably canonical forcing notion doing it,
if we are given a small enough closed unbounded subset as a parameter.
\bigskip

\noindent
(B108) \quad \underline{On successor of singular cardinals, study 
in logic and the foundation of}\\ \underline{Math}, 
Vol. 97(ed. boffa, Van dalen and McAllon) No. Holland, 1979, 357-386.

Let us for simplicity assume $\lambda = \aleph_{\omega +1}$, $\aleph_\omega$
strong limit.  We find a set 
$S^* \subseteq \lambda$, determine modulo the
closed unbounded filter, prove the equivalence of some definitions and:
\mn
\begin{itemize}
\item  if $P$ is $\aleph_n$-complete forcing notion, 
$S_1 \subseteq \{\delta < \lambda,\cf(\delta) < \aleph_n,\delta \in
S^*\}$ is stationary, then $S_1$ remains stationary after forcing with $P$.
However, there is an $\aleph_n$-complete forcing notion which makes $S^*$
non-stationary.
\sn
\item  $S^*$ has no initial segment $\delta$ stationary if GCH holds
\sn
\item  $S^* = \varnothing$ (i.e. not stationary) \Iff \, a weak version of
squares holds
\sn
\item  it is consistent (with ZFC + GCH) that $S^*$ is stationary (if
supercompact cardinals are)(the large cardinals are necessary).
\end{itemize}
\mn
Using (2) and (3) we get a proof, using different methods according to 
whether $S^*$ is stationary or not, of

COROLLARY:  ($\aleph_\omega$ strong limit) there is an 
$\aleph_{\omega+1}$-free non-free group of power $\aleph_{\omega +1}$
(similarly for transversal).  For higher $\lambda$ we get less information.

Donder noted that as if there is a special Aronszajn on $\aleph_{\omega +1}$
then the weak squares (from 3) holds we can (from (4), (3)) deduce:
ZFC + GCH is consistent with ``there is no special Aronszajn tree on
$\aleph_{\omega +1}$" (if a supercompact is consistent).
\bigskip

\noindent
(B109) \quad \underline{Infinite games and reduced powers}, 
AML 20(1981), 77-108 (with W. Hodges).

By \cite{Sh:13}, if $A,B$ are elementarily equivalent structures then there
are ultrafilters $D,E$ such that $A^I/D \cong B^J/E$.  The paper proves
analogues of this theorem where the language can be infinitary and $D,E$
are $\kappa$-complete for $\kappa$ a regular cardinal. 

\noindent
1) Let $\mathrm{PL}_\kappa$ be the language of prenex formulas of $L_{\kappa,\kappa}$
with arbitrary well-ordered game quantifiers of length $< \kappa$.  If
$A,B$ have mutually consistent $PL_\kappa$-theories and $\kappa$ is strongly
compact then $A,B$ have isomorphic limit ultrapowers where the filter
involved are $\kappa$-complete.

\noindent
2) Let $\mathrm{PH}_\kappa$ be the language of Horn formulas in $PL_\kappa$.  
If for every regular $\kappa$, $A$ and $B$ have mutually consistent 
$\mathrm{PH}_\kappa$-theories then $A,B$ have isomorphic limit reduced powers 
with $\kappa$-complete filters.

\noindent
3) If there is a proper class of measurable cardinals, then it is consistent
that (1), (2) hold with ``limit" deleted; this uses game-theoretic arguments
of Galvin and Laver.

\noindent
4) The natural analogue of (2) for classes of structures and limit reduced
products also holds and gives interpolation and preservation theorems for
$\mathrm{PH}_\kappa$ which had been derived proof-theoretically by Hodges.

\noindent
5) Necessary and sufficient conditions are given for a structure $A$ to have
a limit reduced power with $\kappa$-complete filters in which a given 
sentence is true; as a corollary, when $\kappa > \omega$.  The amalgamation
property fails for $\mathrm{PH}_\kappa$-elementary extensions.

\noindent
6) Under various conditions, examples are given of sentences which are
preserved in reduced products over $\kappa$-complete filters but are not
logically equivalent to sentences in $\mathrm{PH}_\kappa$, also answering a related
question of Kueker.

\noindent
7) A non-standard infinitary language is described which has good
interpolation and Feferman-Vaught properties.
\bigskip

\noindent
(B110) \quad \underline{Better quasi-order for uncountable cardinals}, IJM 42(1982), 177-226.

We generalize Nash-Williams theory on better quasi-order to uncountable
cardinals.

Let the well-ordering number of a quasi-order $Q$ be the minimal $\lambda$
such that for any $q_i \in Q$ ($i < \lambda$) for some $i < j$ we have
$q_i \le q_j$, and then we say $Q$ is $\lambda$-well ordered.  We define some 
variants of $\lambda$-better quasi-order, and prove for them preservation 
theorems under the known infinitary operations, (like Seq, ${\clP}$).  We 
then compute the well ordering number of some classes and of the result of 
the infinitary operations on arbitrary quasi-order. 

The main results are:
\mn
\begin{itemize}
\item  the well-ordering number of the class of graphs ordered by embeddings
(taking edges to edges rather than to paths (or disjoint paths)) 
is the first beautiful cardinal $\kappa_0$ (it is a mild large
cardinal, somewhat bigger than weakly compact, and still compatible
with $L$)
\sn
\item  also the well-ordering number of the class of models with one-place
predicates, under elementary embeddings, is the same
\sn
\item  the well-ordering number of $\{M:M \text{ a linear order and is the
union of } \le \lambda \text{ scattered ordered}\}\,\,(\lambda > \aleph_0)$,
ordered by embeddability, is the first beautiful cardinal $> \lambda$.
\end{itemize}
\bigskip

\noindent
(B112) \quad \underline{$S^-$ Forcing, a black box theorem for morasses with applications: super-}\\ \underline{Souslin trees and generalized MA}, 
Israel J. Math 43(1982), 185-224  (with L. Stanley).

\noindent
(A)  Quite a few consistency results can be gotten in the following way:
starting with $\bfV \models GCH$, we force an example of power $\aleph_2$ by
countable approximations.  So a countable approximation depends on a
countable set $\alpha$ of ordinals, so it is $\tau(\alpha)$ for some term
$\tau$, and the number of $\tau$-s is $2^{\aleph_0} = \aleph_1$.  We state
a black box principle, which says that many such proofs can be carried in
the universe $\bfV$ (if it satisfies the principle), moreover the proof that
the black box principle applies is the same as that the forcing works.

Of course, $\aleph_2$ can be replaced by higher cardinals. 

\noindent
(B)  We prove that the principle follows from $\bfV = \bfL$, really much less is
needed: the morass

\noindent
(C)  We got some consequences of the principle, among them the morass, so
they are equivalent.

\noindent
(D)  Another application is the existence of super-Souslin trees.  An
$\aleph_2$-super-Souslin tree $T$ is an $\aleph_2$-tree $T$, such that for any
$\alpha < \omega_2$, $\bar \alpha = \langle \alpha_n : \alpha < \omega \rangle$, 
and $\omega$ distinct members of $T_\alpha$, we have a function 
$$F_{\bar \alpha} : \Big\{ \langle \beta_b : n < \omega \rangle:\alpha_n \le \beta_n
\wedge (\exists \beta) \bigwedge\limits_{n} \beta_n \in T_\beta \Big\} \rightarrow
\omega_1$$ 
with $F_{\bar \alpha}(\beta) = F_{\bar \alpha}(\gamma)$ implying
$\beta_n,\gamma_n$ are comparable for infinitely many $n$'s.  (So it is
essentially Souslin as a special Aron tree is an Aron tree).

Now an $\aleph_2$-super-Souslin tree is $\aleph_2$-Souslin (ignoring possibly
finitely many branches), and it remains so in any extension not adding reals.

Hence: if $\CH + \SH_{\aleph_2}$ \then \, $\aleph_2$ is inaccessible in $\bfL$.
\bigskip

\noindent
(B114) \quad \underline{Isomorphism types of Aronszajn trees}, 
IJM 50(1985), 75-113 (with U. Abraham).

We investigate Aronszajn trees, under embeddability and isomorphism when a
restriction to a club (closed unbounded set of levels) $C$ is allowed, get
consequences of $2^{\aleph_0} < 2^{\aleph_1}$, and consistency results with
CH and with $2^{\aleph_0} = 2^{\aleph_1}$.  We also formulate a combinatorial
principle that follows from the weak diamond (which is equivalent to
$2^{\aleph_0} < 2^{\aleph_1}$) and hopefully will serve to obtain 
results like those in (A) below.

The results have variants speaking on Specker orders.

\noindent
(A) \textbf{Consequences} of $2^{\aleph_0} < 2^{\aleph_1}$ (\S1). 

If CH or even if $2^{\aleph_0} < 2^{\aleph_1}$ holds, then:
\mn
\begin{enumerate}
    \item[(a)]   there are $2^{\aleph_1}$ pairwise really different Aronszajn trees, i.e. not one of these trees is embeddable on a club into the other
\sn
    \item[(b)]   there is an Aronszajn tree $T$ such that for every closed unbounded 
    $C \subseteq \omega_1$, $T \rest C$ is rigid, (i.e. the only embedding of $T \rest C$ into $T \rest C$ is the identity, so $T$ is a \emph{really rigid} tree).
\end{enumerate}
\mn
We can combine (b) with (a):
\mn
\begin{enumerate}
\item[(c)]   for every Aronszajn tree $T^1$ there is an Aronszajn tree
$T^2$ such that $T^1$ is not embeddable into $T_2$ on a closed unbounded
set, i.e., there is no prime Aronszajn tree.
\end{enumerate}
\mn
The method is the combinatorial principle mentioned above.

\noindent
(B) \textbf{Consistency results with CH} (\S2).
\mn
\begin{enumerate}
\item[(a)]   GCH + the following is consistent:
\sn
\begin{enumerate}
\item[$(a_1)$]  there is a universal Aronszajn tree $T$ i.e., a tree
$T$ such that for every Aronszajn tree $T^*,T^* \restriction C$ is order
embeddable into $T \restriction C$ for some closed unbounded $C \subseteq
\omega_1$.  (The universal tree $T$ is a special Aronszajn tree), so in this
model there are no Souslin trees.  Thus $(a_1)$ is a strengthening of
Jensen's ``CH is consistent with Souslin's hypothesis," whose method we use.
\sn
\item[$(a_2)$]  Every two Aronszajn trees contain subtrees which are
isomorphic on a closed unbounded set.
\end{enumerate}
\sn
\item[(b)]   We have some consistency result with CH concerning Souslin
trees.  For example, CH + ``there is a Souslin tree" do not imply that there
are $2^{\aleph_1}$ really different Souslin trees.  Using this construction,
Shelah proved that the Malitz-Magidor logic is not necessarily compact.
\end{enumerate}
\bigskip

\noindent
\textbf{Problem}:  Does the existence of 
two really different Souslin trees follow from 
the existence of a Souslin tree?
\bigskip

\noindent
(C) \textbf{Consistency results with} $2^{\aleph_0} = 2^{\aleph_1}$.
\mn
\begin{enumerate}
\item[(a)]   Martin's axiom + $2^{\aleph_0} = \aleph_2$ + every two
Aronszajn trees are isomorphic on a closed unbounded set is consistent.
\sn
\item[(b)]    Martin's axiom + $2^{\aleph_0} > \aleph_1$ does not imply
that every two Aronszajn trees are isomorphic on a closed unbounded set.
\sn
\item[(c)]  Martin's axiom + $2^{\aleph_0} = \kappa$ + every two
Aronszajn trees are isomorphic on a closed unbounded set is consistent.
$\kappa$ here is ``any" regular cardinal such that 
$\kappa^{< \kappa}=\kappa$.
\end{enumerate}
\mn
(The difference between this item and (a) is that in (a) we get
$2^{\aleph_0} = \aleph_2$ and here $2^{\aleph_0}$ is as big as we want;
moreover, the proofs are different: in (a) we use proper forcing and in (c),
a technique of using generic reals).
\bigskip

\noindent
(B115) \quad \underline{Superstable fields and groups},  
AML 18(1980), 227-270 (with G. Cherlin).

The main object is to prove: for any infinite field $F$, $\Th(F)$ is
superstable \underline{iff} $F$ is algebraically closed.  This is done by
investigating connected groups ($G$ connected if it has no proper definable
subgroup of finite index) and indecomposable subgroups (same rank and
multiplicity one).  Also extend some known results for $\omega$-stable groups
with Morley rank $\le 3$ to superstable groups (i.e. the theory of the group
is superstable). 

An important ingredient is looking at definable groups in models of a 
stable theory and deal with local ranks $\rk(-,\Delta,\kappa)$, 
$\kappa \in \{2,\aleph_0\}$ for $\Delta$-s which are translation closed.
\bigskip

\noindent
(B116) \quad \underline{Positive results in abstract model theory: 
a theory of compact logics}, Ann. Pure Appl. Logic 25(1983), 263-299 (with J.A. Makowski).

We prove that compactness is equivalent to the amalgamation property provided
the occurency number of the logic is smaller than the first measurable
cardinal.  We also relate compactness to the existence of certain regular
ultrafilters associated with the logic and develop a general theory of
compactness and its consequences.  Continued in \cite{Sh:199}.
\bigskip

\noindent
(B117) \quad \underline{Combinatorial problems on trees: 
Partitions, $\Delta$-systems and large free}\\ \underline{subsets}, Ann. Pure Appl. Logic 33(1987), 43-81 (with M. Rubin).

We prove partition theorems on trees and generalize to a setting of trees
the theorems of Erd\H{o}s and Rado on $\Delta$-systems and the theorems of
Fodor and Hajnal on free sets.  For simplicity we ignore some of the results,
e.g. on trees with a non-fix splitting, or with $> \omega$ levels.  Let
$\mu$ be an infinite cardinal and $T_\mu$ be the tree of finite sequences of
ordinals $< \mu$, with the partial ordering of being an initial segment:
$\eta < \nu$ denotes that $\eta$ is an initial segment of $\nu$.  
A subtree of $T_\mu$ is a non-empty subset of $T_\mu$ closed under initial 
segments.  $T \le T_\mu$ means that $T$ is a subtree of $T_\mu$ and 
$\langle T, \le \rangle \cong T_\mu$.  

The following are extracts:
\begin{theorem}\label{b117.1}
(1)  \underline{A partition theorem}.  Suppose $\cf(\lambda) \ne
\cf(\mu)$, $F : T_\mu \rightarrow \lambda$ and for every branch $b$ of
$T_\mu$ we have $\sup(\{F(\alpha) : \alpha \in b\}) < \lambda$.
Then there is $T \le T_\mu$ such that 
$\sup(\{F(\alpha) : \alpha \in T\}) < \lambda$.
\end{theorem}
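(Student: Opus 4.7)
The plan is to reduce to the case $\lambda$ regular and then split according to whether $\lambda < \cf(\mu)$ or $\lambda > \cf(\mu)$. For the reduction: if $\cf(\lambda) < \lambda$, fix $\langle \lambda_i : i < \cf(\lambda)\rangle$ cofinal in $\lambda$ and replace $F$ by $F'(\eta) := \min\{i : F(\eta) < \lambda_i\}$, a function $T_\mu \to \cf(\lambda)$. Branches remain bounded for $F'$, and a subtree on which $F' \le i^*$ is one on which $F \le \lambda_{i^*}$; since $\cf(\cf(\lambda)) = \cf(\lambda) \ne \cf(\mu)$, the hypotheses persist, so we may assume $\lambda$ is regular.

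In Case A ($\lambda < \cf(\mu)$) the strategy is to build, by recursion on $k < \omega$, a chain $T^{(0)} \subseteq T^{(1)} \subseteq \cdots$ of $\mu$-branching subtrees of $T_\mu$ of depth $k$, together with ordinals $\alpha_0, \alpha_1, \ldots < \lambda$, so that $F(\eta) = \alpha_{|\eta|}$ for every $\eta \in T^{(k)}$. Since $\cf(\mu) > \lambda$, at each individual node $\eta$ the map $\beta \mapsto F(\eta^\frown\langle\beta\rangle): \mu \to \lambda$ has some fiber of size $\mu$; iterating this pigeonhole back up a branch of length $k$, one finds for each $k$ a common value $\alpha_{k+1} < \lambda$ and a pruning of $T^{(k)}$ on which, at every level $j < k$ and every surviving node, exactly the ``correct'' $\mu$-many children persist, so that all surviving level-$k$ leaves have $\mu$-many children in $T_\mu$ with $F$-value $\alpha_{k+1}$. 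Adjoining these yields $T^{(k+1)}$. Setting $T = \bigcup_k T^{(k)}$, every branch $b$ of $T$ satisfies $F(b \restriction j) = \alpha_j$, so $\sup_{\eta \in T} F(\eta) = \sup_j \alpha_j$; this supremum is $<\lambda$ by regularity of $\lambda$ when $\lambda > \omega$, and by the branch hypothesis applied to any branch of $T$ when $\lambda = \omega$.

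In Case B ($\lambda > \cf(\mu)$) there are two sub-cases. If $\mu < \lambda$, then $|F(T_\mu)| \le \mu < \lambda$ and regularity of $\lambda$ already force $F$ to be bounded on $T_\mu$, so $T = T_\mu$ works. If $\mu \ge \lambda$, so that $\mu$ is singular with $\cf(\mu) < \lambda$, I would induct on $\mu$: fix $\langle \mu_i : i < \cf(\mu)\rangle$ cofinal in $\mu$ with each $\mu_i$ regular and distinct from $\cf(\lambda)$, apply the inductive hypothesis at each $\mu_i$ to obtain a $T_{\mu_i}$-subtree on which $F$ is bounded by some $\beta_i < \lambda$, and fuse these coherently (as $i$ grows) into a $T_\mu$-subtree with $F$ bounded by $\sup_i \beta_i < \lambda$, the final inequality using $\cf(\mu) < \cf(\lambda) = \lambda$.

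The hardest part will be the back-propagation in Case A: after pigeonholing children at a level-$k$ node, the analogous restrictions at level $k-1$, then $k-2$, and so on down to the root, must be carried out coherently so that at each intermediate level $\mu$-many nodes still survive with matching pigeon-values; this forces an iterated use of $\cf(\mu) > \lambda$ at every level of the pruning. A secondary difficulty is the coherent fusion in Case B with $\mu \ge \lambda$, where the $T_{\mu_i}$-subtrees produced by the inductive hypothesis for different $i$ must be arranged as a nested family whose union is $\mu$-branching and on which all the bounds $\beta_i$ hold simultaneously.
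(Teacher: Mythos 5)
Your reduction to $\lambda$ regular is correct, and the sub-case $\mu<\lambda$ of Case B is immediate; but both of the steps you yourself flag as ``the hardest part'' are genuine gaps rather than routine work. In Case A the trees $T^{(k)}$ do not form a chain: different leaves of $T^{(k)}$ will in general have different majority values for their children, so passing to $T^{(k+1)}$ forces you to discard leaves and hence to re-prune every level $j\le k$; the honest limit object is therefore, at each level $j$, the intersection over $k\ge j$ of the level-$j$ parts of the $T^{(k)}$. A decreasing $\omega$-sequence of subsets of $\mu$, each of cardinality $\mu$, can have empty intersection (split $\mu$ into $\omega$ pieces of size $\mu$ and take tails), and nothing in your construction rules this out; $\cf(\mu)>\aleph_0$ does not help. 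This is the classical fusion obstruction, and it is not cosmetic: demanding that $F$ be \emph{constant on each level} is a strictly stronger, Halpern--L\"auchli-type homogenization than the boundedness the theorem asserts, and it is exactly what makes the endless re-pruning unavoidable. A workable argument must replace level-homogenization by something that stabilizes --- for instance, analyzing for each $\gamma<\lambda$ the set $S_\gamma$ of nodes admitting a $\mu$-branching tree of extensions with all $F$-values below $\gamma$, showing by your pigeonhole that a node outside $\bigcup_\gamma S_\gamma$ has $\mu$-many children outside $\bigcup_\gamma S_\gamma$, and deriving a contradiction from the resulting hereditarily bad subtree. Note also that when $\cf(\lambda)>\aleph_0$ the branch hypothesis is vacuous (branches are countable), so in that regime the proof cannot lean on branches at all, while for $\cf(\lambda)=\aleph_0$ the branch hypothesis has to enter through the production of a branch with unbounded $F$-values, not merely through bounding $\sup_j\alpha_j$ at the end.

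Case B's second sub-case has the same kind of hole. The inductive hypothesis hands you, for each $i$, \emph{some} $T^i\le T_{\mu_i}$ with $F\le\beta_i$, with no control over which one; there is no reason the $T^i$ can be taken nested, and the plain union is not $\mu$-branching, since a node lying in $T^{i_0}$ but in no later $T^j$ has only $\mu_{i_0}<\mu$ children in the union. Making the fusion work requires a strengthened induction hypothesis of the form ``any already-constructed bounded $\mu_i$-branching subtree extends to a bounded $\mu_{i+1}$-branching one,'' which is a genuinely stronger statement than the theorem and is nowhere argued. So the skeleton --- regularization, the pigeonhole dichotomy $\lambda<\cf(\mu)$ versus $\lambda>\cf(\mu)$, decomposition of a singular $\mu$ --- is reasonable, but neither of the two limit constructions on which the proof actually rests is available as described.
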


\begin{theorem}\label{b117.2}
(2) \underline{A theorem on large free subtrees}.  Let $\lambda^+ \le \mu$, 
$F : T_\mu \rightarrow P(T_\mu)$, for every branch $b$ of
$T_\mu:|\bigcup\{F(\alpha) : \alpha \in b\}| < \lambda$, and for every
$\alpha \in T_\mu$ and $\beta \in f(\alpha)$, $\beta \nless \alpha$.
Then there is $T \le T_\mu$ such that for every $\alpha,\beta \in T$,
$\beta \notin F(\alpha)$.

Let $P_\lambda(C)$ denote the ideal in $P(C)$ of all subsets of $C$ whose
power is less than $\lambda$.  Let $\cov(\mu,\lambda)$ mean that $\mu$ is
regular, $\lambda < \mu$, and for every $\kappa < \mu$ there is $D \subseteq
P_\lambda(\kappa)$ such that $|D| < \mu$ and $D$ generates the ideal
$P_\lambda(\kappa)$ of $P(\kappa)$.  Note that if for every 
$\kappa < \kappa^{< \lambda} < \cf(\mu) = \mu$, then $\cov(\mu,\lambda)$ holds.
Let $\alpha \wedge \beta$ denote the maximal common initial segment of 
$\alpha$ and $\beta$.
\end{theorem}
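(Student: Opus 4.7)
The plan is to build $T\le T_\mu$ by recursion on the height of $T_\mu$, selecting at each node $\eta$ of the partial tree a set $A_\eta\subseteq\mu$ of $\mu$ admissible successor-indices; the final $T$ will consist of all finite sequences whose successive coordinates lie in the corresponding $A_\eta$'s. First I would record the easy observations: for each $\alpha\in T_\mu$, since $F(\alpha)\subseteq\bigcup\{F(\gamma):\gamma\in b\}$ for any branch $b$ through $\alpha$, we get $|F(\alpha)|<\lambda$, and more generally the ``cone union'' $U(\eta):=\bigcup_{\alpha\le\eta}F(\alpha)$ has $|U(\eta)|<\lambda$. Without loss of generality $\alpha\notin F(\alpha)$ for every $\alpha$.

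Next I would split the freeness requirement on $T$ by the relative tree position of $\alpha,\beta\in T$. The case $\beta<\alpha$ strictly is ruled out automatically by the hypothesis $\beta\nless\alpha$. The ancestor case $\alpha<\beta$ is a local constraint: when extending $\eta\in T$ to its successors one must avoid the $<\lambda$ indices $i$ with $\eta^\frown\langle i\rangle\in U(\eta)$, which still leaves $\mu$ good indices because $\lambda^+\le\mu$. The genuine difficulty is the incomparable case: $\alpha,\beta\in T$ with branching point $\gamma=\alpha\wedge\beta$ strictly below both and $\beta\in F(\alpha)$. To handle it I would maintain, alongside the partial tree $T^{(n)}$, a commitment set $X_\eta\subseteq T_\mu$ for each $\eta\in T^{(n)}$, tracking descendants of $\eta$ that future construction must avoid. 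Whenever a new $\alpha$ enters $T^{(n)}$, for each $\beta\in F(\alpha)$ incomparable to $\alpha$ we add $\beta$ to $X_\eta$ for the appropriate $\eta$ on $\beta$'s side of the split at $\alpha\wedge\beta$; the symmetric clause $\alpha\in F(\beta)$ is handled dually.

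The main obstacle will be controlling $|X_\eta|$ so the admissible successor-indices retain cardinality $\mu$. Along a single branch the branch hypothesis bounds the ancestral contribution by $<\lambda$, but now every incomparable $\alpha\in T^{(n)}$ contributes as well, and a naive sum over $|T^{(n)}|=\mu$ choices of $\alpha$ yields $\mu\cdot\lambda=\mu$ forbidden elements, which would swallow the $\mu$ available successors of a node. This is where the covering hypothesis $\cov(\mu,\lambda)$ introduced right after the statement seems essential: it should let us cover each $<\lambda$-sized $F(\alpha)$ by a uniformly bounded family of $<\lambda$-subsets of $\mu$ at each coordinate, so that the forbidden descendants of a fixed $\eta$ at any fixed future level form a set of cardinality $<\mu$. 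Verifying this bookkeeping at every stage and confirming that each $A_\eta$ can consistently be chosen of size $\mu$ is the technical heart of the argument.
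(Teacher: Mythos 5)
Your plan stalls exactly where the theorem lives. You correctly dispose of the ancestor cases: $\beta<\alpha$ is excluded by hypothesis, and $\alpha<\beta$ is handled by avoiding, at each node $\eta$ of the subtree under construction, the $<\lambda$ extensions landing in $\bigcup\{F(\alpha):\alpha\le\eta\}$. You also correctly isolate the incomparable case as the real content. But your mechanism for that case does not work, and you essentially say so yourself: the commitment sets $X_\eta$ absorb $<\lambda$ forbidden nodes from each of up to $\mu$ incomparable $\alpha$'s already placed in $T$, so $|X_\eta|$ can reach $\mu$ and wipe out every admissible extension of $\eta$ at some level. The hoped-for rescue via $\cov(\mu,\lambda)$ fails on two counts. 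First, $\cov(\mu,\lambda)$ is not a hypothesis of Theorem (2); it is introduced only for the $\Delta$-system theorem (3), while (2) assumes only $\lambda^+\le\mu$ (which even allows $\mu$ singular, where $\cov(\mu,\lambda)$ is false as defined). Second, even where it holds, $\cov(\mu,\lambda)$ bounds the number of generators of the ideal $P_\lambda(\kappa)$ for $\kappa<\mu$; it does nothing to shrink a union of $\mu$ many sets of size $<\lambda$ below $\mu$. Already $F(\alpha)$ a single node incomparable with $\alpha$ (so the branch hypothesis holds trivially for any $\lambda\ge\aleph_1$) defeats the bookkeeping: the nodes forbidden ``from the side'' at a fixed $\eta$ can exhaust all of $\eta$'s descendants at a given level.

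Note also that this paper is only an abstract and contains no proof to imitate. The standard route to free-set theorems of this kind (Hajnal's set-mapping theorem is the prototype, and the tree version is its analogue) does not greedily avoid the global union $\bigcup_{\alpha\in T}F(\alpha)$ --- that union is too big, as you computed. Instead one exploits the symmetry of the freeness requirement and the location of $F(\alpha)$ relative to the meet $\alpha\wedge\beta$, typically after a homogenization step (for which the partition theorem (1) is available) and an induction on $\lambda$, so that the obligation attached to a pair $\{\alpha,\beta\}$ is discharged locally at $\alpha\wedge\beta$ rather than by global avoidance. Without an idea of that kind your recursion cannot be completed. (A minor point: ``without loss of generality $\alpha\notin F(\alpha)$'' is not a reduction but a correction of the statement; as literally written, $\beta\nless\alpha$ permits $\alpha\in F(\alpha)$, and then no free $T$ exists, so the intended hypothesis must already exclude this.)
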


\begin{theorem}\label{b117.3}
(3) \underline{A theorem on $\Delta$-systems}.  Suppose
$\cov(\mu,\lambda)$ holds, $F : T_\mu \rightarrow P(C)$ and for every branch
$b$ of $T_\mu$ we have $|\bigcup\{F(\alpha) : \alpha \in b\}| < \lambda$, 
then there is $T \le T_\mu$ and a function $K : T \rightarrow P_\lambda(C)$ 
such that for every incomparable $\alpha, \beta \in T$, 
$F(\alpha) \cap F(\beta) \subseteq K(\alpha \wedge \beta)$.
\end{theorem}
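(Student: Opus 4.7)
The plan is to construct $T\le T_\mu$ together with $K$ by recursion on the level $n<\omega$, using $\cov(\mu,\lambda)$ to arrange a $\Delta$-system pattern among the $F$-content of the subtrees above the immediate successors chosen at each node. First I reduce: since $|T_\mu|=\mu$ and $|F(\alpha)|<\lambda\le\mu$, we have $\big|\bigcup_\alpha F(\alpha)\big|\le\mu$, so without loss of generality $C\subseteq\mu$. By $\cov(\mu,\lambda)$ fix for each $\kappa<\mu$ a family $D_\kappa\subseteq P_\lambda(\kappa)$ of size $<\mu$ generating $P_\lambda(\kappa)$, and set $D=\bigcup_{\kappa<\mu}D_\kappa$. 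Every member of $P_\lambda(C)$ lies below some $d\in D$; in particular for every branch $b$ the set $F[b]=\bigcup_{\alpha\in b}F(\alpha)$ is covered by some $d_b\in D$.

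The recursion maintains: (i) $T$ up to level $n$ is $\mu$-branching; (ii) to each $\alpha\in T$ is attached a promise $d_\alpha\in D$ recording the commitment that the final subtree $T^{[\alpha]}=\{\beta\in T:\alpha\le\beta\}$ will satisfy $\bigcup\{F(\beta):\beta\in T^{[\alpha]}\}\subseteq d_\alpha$; (iii) for distinct immediate $T$-successors $\alpha',\alpha''$ of $\alpha$, $d_{\alpha'}\cap d_{\alpha''}\subseteq K(\alpha)\in P_\lambda(C)$. Given $\alpha$ with promise $d_\alpha$, I extend to level $n+1$ by selecting, for each $i<\mu$, an admissible cover $d\subseteq d_\alpha$ from $D$ such that the cone above $\alpha^\frown\langle i\rangle$ in $T_\mu$ contains a $\mu$-branching subtree with $F$-content inside $d$; a local application of Theorem~\ref{b117.1} inside $d_\alpha$ combined with the regularity of $\mu$ makes such a cover exist for $\mu$-many $i$. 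A $\Delta$-system argument deduced from $\cov(\mu,\lambda)$ then thins to $S_\alpha\subseteq\mu$ of size $\mu$ with $\{d_{\alpha^\frown\langle i\rangle}:i\in S_\alpha\}$ forming a $\Delta$-system with root $K(\alpha)\in P_\lambda(d_\alpha)$.

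Once the recursion is complete, the conclusion follows immediately: for incomparable $\alpha,\beta\in T$ with meet $\gamma$, there exist distinct immediate $T$-successors $\gamma'\le\alpha$ and $\gamma''\le\beta$ of $\gamma$; by (ii) one has $F(\alpha)\subseteq d_{\gamma'}$ and $F(\beta)\subseteq d_{\gamma''}$, hence by (iii) $F(\alpha)\cap F(\beta)\subseteq K(\gamma)$, as required. The main obstacle lies in the existence step inside the recursion: verifying that for $\mu$-many immediate successors of $\alpha$ the cone above in $T_\mu$ really does admit a $\mu$-branching subtree with $F$-content contained in some member of $D$ below $d_\alpha$. This blends an iterated use of the branch-union bound with $\cov(\mu,\lambda)$ (to pigeonhole branches into shared covers), a Fodor-style tree-pruning step (to extract a subtree out of many branches sharing a cover), and a fusion argument over the levels $n<\omega$ to glue the construction together; the initial step at the root is handled by an application of Theorem~\ref{b117.1} that passes to a subtree whose $F$-content is globally bounded by some $d\in D$.
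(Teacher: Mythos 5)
There is a genuine gap, and it is located exactly where you place ``the main obstacle'': invariant (ii) is unsatisfiable, so the recursion cannot even begin. You require each promise $d_\alpha$ to be a member of $D$, hence a set of cardinality $<\lambda$, and to contain $\bigcup\{F(\beta):\beta\in T^{[\alpha]}\}$ for the \emph{entire cone} $T^{[\alpha]}$ of the final tree. But the hypothesis only bounds the union of $F$ along single branches; it says nothing about the union over a $\mu$-branching subtree, which has $\mu$ nodes. Concretely, let $C=T_\mu$, $F(\eta)=\{\eta\}$ for $\eta$ of length $1$ and $F(\eta)=\varnothing$ otherwise: every branch contributes at most a singleton, so the hypothesis holds, yet $\bigl|\bigcup\{F(\beta):\beta\in T'\}\bigr|=\mu>\lambda$ for \emph{every} $T'\le T_\mu$ (and the theorem's conclusion holds here trivially with $K\equiv\varnothing$, so this is a legitimate instance). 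Thus the root step --- ``passing to a subtree whose $F$-content is globally bounded by some $d\in D$'' --- is impossible, and the final verification, which uses $F(\alpha)\subseteq d_{\gamma'}$ and $F(\beta)\subseteq d_{\gamma''}$, collapses with it. The appeal to Theorem~\ref{b117.1} does not repair this: that theorem concerns an ordinal-valued $F:T_\mu\rightarrow\lambda$ and bounds a supremum over a subtree (and moreover needs $\cf(\lambda)\ne\cf(\mu)$, which is not assumed in Theorem~\ref{b117.3}); it does not bound the cardinality of a union of $<\lambda$-sized sets over a subtree, which is what your existence step needs.

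The preliminary reductions (WLOG $C\subseteq\mu$, the single generating family $D$, covering each branch-union by some $d_b\in D$) are fine, and a weakened invariant --- $d_\alpha\in D$ covering only $\bigcup\{F(\rho):\rho\le\alpha\}$, a finite union of sets of size $<\lambda$ --- is maintainable, with a $\Delta$-system refinement of the promises of the immediate successors available from $|D_\kappa|<\mu=\cf(\mu)$. But that only controls $F$ on the chosen finite paths: for incomparable $\alpha,\beta$ lying well above their meet $\gamma$, nothing forces $F(\alpha)\cap F(\beta)$ into $K(\gamma)$, and this is precisely the content of the theorem (the branch hypothesis never sees two incomparable nodes at once). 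Bridging that gap requires an additional mechanism for taming intersections across distinct cones --- this is where the partition/free-subtree machinery of Theorems~\ref{b117.1} and~\ref{b117.2} has to be brought to bear node by node, not merely a $\Delta$-system on covers of whole cones. As written, the proposal does not prove the statement.
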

\bigskip

\noindent
(B118) \quad \underline{On the expressibility hierarchy of 
Magidor-Malitz quantifiers}, JSL 48(1983), 542-557 (with M. Rubin).

We prove that the logics of Magidor Malitz and their generalization by Rubin
are distinct even for PC classes; during this we prove the existence of
forcing notions satisfying one variant of the countable chain condition but
not another and use the preservation of the satisfaction of sentences in
$\bbL^n$ (see below) under appropriate chain condition.

Let $M \models Q^nx_1 \ldots x_n \varphi(x_1 \ldots x_n)$ mean that there is
an uncountable subset $A$ of $|M|$ such that for every $a_1,\dotsc,a_n \in
A,M \models \varphi[a_1,\dotsc,a_n]$.
\bigskip

\begin{theorem}\label{b118.1}
(1)  1) $(\diamondsuit_{\aleph_1})$.  For every $n \in \omega$ the class 
$K_{n+1} = \{\langle A,R \rangle:\langle A,R \rangle \models Q^{n+1}
x_1 \ldots x_{n+1} R(x_1,\dotsc,x_{n+1})\}$ is not an $\aleph_0-PC$-class in
the logic $L^n$, obtained by closing first order logic under $Q^1,\dotsc,
Q^n$, i.e., for no countable $L^n$-theory $T,K_{n+1}$ is the class of reducts
of the models of $T$. 

\noindent
2) (CH) For every $n \in \omega$ there is a forcing notion $(P,\le)$
satisfying $(Q^ix_1 \ldots x_i)(x_1,\dotsc,x_i$ have no upper bound)
\Iff \, $i > n$.
\end{theorem}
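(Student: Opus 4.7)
The proof splits into two parts linked by a preservation theorem for $L^n$-satisfaction under $K_n$-like chain conditions: Part 2 constructs a forcing notion with exactly the right chain-condition profile, and Part 1 uses it (or its combinatorial shadow) to separate the logics.

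For Part 2, I would construct $P = P_n$ under CH combinatorially. Fix a symmetric relation $c \subseteq [\omega_1]^{n+1}$ and let $P$ consist of finite subsets $p \subseteq \omega_1$ with no $(n+1)$-subset in $c$, ordered by reverse inclusion. The coloring $c$ is built in an $\omega_1$-stage induction with CH-bookkeeping that juggles two kinds of demands: (i) reserve an uncountable $C \subseteq \omega_1$ with $[C]^{n+1} \subseteq c$, which yields $(Q^{n+1}x_1\ldots x_{n+1})(x_1,\dots,x_{n+1}$ have no upper bound$)$ via the uncountable family $\{\{\gamma\} : \gamma \in C\} \subseteq P$; (ii) for each uncountable sequence $\bar p = \langle p_\alpha : \alpha < \omega_1 \rangle$ of potential conditions (enumerable by CH), ensure some $\alpha_1 < \dots < \alpha_n$ give $p_{\alpha_1} \cup \dots \cup p_{\alpha_n}$ free of $c$-tuples, yielding $\neg Q^i$ for $i \le n$. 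After a standard $\Delta$-system reduction of $\bar p$ with root $r$ and pairwise disjoint wings $q_\alpha$, the task reduces to choosing $n$ wings so that no $(n+1)$-subset of $r \cup q_{\alpha_1} \cup \dots \cup q_{\alpha_n}$ receives color $1$; I declare exactly the necessary tuples to have color $0$ at the current stage while reserving the ``$C$-region'' for color $1$.

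For Part 1, assume towards contradiction that $K_{n+1}$ is $\aleph_0$-PC in $L^n$, witnessed by a countable $L^n$-theory $T$ in a vocabulary $\tau \supseteq \{R\}$. Using $\diamondsuit_{\aleph_1}$, I would construct a model $\mathfrak{M} \models T$ of cardinality $\aleph_1$ whose $\{R\}$-reduct has no uncountable $(n+1)$-clique, contradicting the PC-characterization. Build $\mathfrak{M}$ as an increasing union of countable $L^n$-elementary submodels $M_\alpha \models T$ with universe $\omega_1$, indexed by $\alpha < \omega_1$. Use $\diamondsuit$ to predict, at stage $\alpha$, a potential uncountable clique $A_\alpha \subseteq \omega_1$, and arrange that $A_\alpha \cap |M_{\alpha+1}|$ already contains an $(n+1)$-tuple outside $R^{M_{\alpha+1}}$. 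The point is that the axioms of $T$, being in $L^n$, can only enforce structure visible to $n$-ary quantifiers, so whenever we add elements to realize $L^n$-existential axioms, we retain the freedom to keep them outside any designated $(n+1)$-clique.

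The main obstacle in Part 2 is keeping the two sets of demands consistent through the induction, which I would handle by assigning each incoming sequence $\bar p^{(\gamma)}$ a range of indices disjoint from the reserved initial segment of $C$, absorbing the ``cross'' tuples via the $\Delta$-system analysis one stage at a time. The main obstacle in Part 1 is showing that the $\diamondsuit$-guided choices still yield a model of $T$; this requires a careful interleaving of witness-adding for $L^n$-axioms with the anti-clique demands, using that an $L^n$-type over a countable parameter set has many free realizations in a sufficiently saturated countable model of $T$, and that the Tarski--Vaught test for $L^n$-elementarity propagates through the union at limit stages.
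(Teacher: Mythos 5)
This entry of the paper is only an abstract, so there is no detailed proof to compare against; the stated method is to ``prove the existence of forcing notions satisfying one variant of the countable chain condition but not another and use the preservation of the satisfaction of sentences in $L^n$ under appropriate chain condition.'' Your Part 2 is in the spirit of the first half of that method and is essentially the standard construction, though you still owe the verification that the two kinds of demands cohere: declaring every $(n+1)$-subset of $F_\gamma=r\cup q_{\alpha_1}\cup\dots\cup q_{\alpha_n}$ to be colour $0$ at stage $\gamma$ must never trap $n+1$ points of the eventual clique $C$ inside some $F_\gamma$, including the $F_\gamma$'s processed after part of $C$ has been committed; this needs an explicit interleaving guaranteeing $|C\cap F_\gamma|\le n$ for every processed demand, not just a reservation of indices (the wings are given to you, not chosen by you).

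The genuine gap is in Part 1. The sentence ``the axioms of $T$, being in $L^n$, can only enforce structure visible to $n$-ary quantifiers, so \dots we retain the freedom to keep them outside any designated $(n+1)$-clique'' is precisely the theorem to be proved, not a usable lemma. Concretely, $T$ may contain positive sentences $Q^n y_1\ldots y_n\,\psi(\bar y,\bar a)$, each of which obliges you, throughout the recursion, to keep adding points to uncountably many designated $n$-ary homogeneous sets; the $\diamondsuit$-sealing step must refute $R$ on an $(n+1)$-tuple of the guessed set $A_\alpha$ \emph{while simultaneously} extending all of these homogeneous sets, and nothing in your sketch shows these demands are jointly satisfiable. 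Note that your Part 1 as written never uses $n+1>n$: run verbatim with an $n$-ary $R$ it would show that $K_n$ is not $\aleph_0$-PC in $L^n$, which is false, since $Q^n\bar x\,R(\bar x)$ is itself an $L^n$-sentence. The point at which $n+1>n$ must enter is exactly the missing compatibility lemma --- an $n$-dimensional amalgamation/chain-condition statement for ``seal one $(n+1)$-tuple and extend the $n$-ary homogeneous sets,'' which fails one dimension up; this is the ``preservation under the appropriate chain condition'' the abstract alludes to, and it is the combinatorial twin of your Part 2 rather than a routine consequence of it. A secondary but real defect: under the standard interpretation a countable structure satisfies no positive $Q^i$-sentence, so your ``increasing union of countable $L^n$-elementary submodels $M_\alpha\models T$'' does not exist, and $L^n$-elementarity is not preserved under unions of chains (new uncountable homogeneous sets can appear at the top); one must work with weak models and protect every $\neg Q^i$-sentence by the same sealing mechanism, which again lands on the unproved compatibility lemma.
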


\begin{theorem}\label{b118.2}
(2)
$(\diamondsuit_{\aleph_1})$.  Let $M \models (Q^E x,y) \varphi(x,y)$ mean that 
there is $A \subseteq |M|$ such that
$E_{A,\varphi} =: \{\langle a,b \rangle : a,b \in A$ and $M \models \varphi
[a,b]\}$ is an equivalence relation on $A$ with uncountably many equivalence
classes, and such that each equivalence class is uncountable.  Let
$K^E = \{ \langle A,R \rangle : \langle A,R \rangle \models (Q^Ex,y)R(x,y)\}$,
then $K^E$ is not an $\aleph_0-PC$-class in the logic gotten by closing first
order logic under the set of quantifiers $\{Q^n:n \in \omega\}$ which were
defined in Theorem 1.
\end{theorem}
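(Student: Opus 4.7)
My plan is to prove the statement by contradiction. Suppose that $K^E$ were an $\aleph_0$-$PC$-class in the logic $L^\omega := \bigcup_{n < \omega} L^n$, witnessed by a countable $L^\omega(\tau^+)$-theory $T$ for some $\tau^+ \supseteq \{R\}$. Using $\diamondsuit_{\aleph_1}$, I will construct a $\tau^+$-model $M$ of cardinality $\aleph_1$ such that $M \models T$ but the $\{R\}$-reduct $\langle |M|, R^M \rangle$ does not belong to $K^E$, yielding the required contradiction.

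The construction builds an increasing continuous chain $\langle M_\alpha : \alpha < \omega_1 \rangle$ of countable $\tau^+$-models of a Skolemisation $T^* \supseteq T$, with $|M_\alpha| = \alpha$ for $\alpha \ge \omega$, and takes $M := \bigcup_{\alpha < \omega_1} M_\alpha$. Such a Skolemisation exists because each $Q^n$ admits Skolem-function axioms witnessing the homogeneity of its associated uncountable set, so Skolem hulls inside models of $T^*$ are $L^\omega$-elementary. Fix a $\diamondsuit_{\aleph_1}$-sequence $\langle A_\alpha : \alpha < \omega_1 \rangle$, $A_\alpha \subseteq \alpha$, so that $\{\alpha < \omega_1 : X \cap \alpha = A_\alpha\}$ is stationary for every $X \subseteq \omega_1$. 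Limit stages are unions; the $L^\omega$-Tarski--Vaught criterion then gives $M \models T$.

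At a successor stage $\alpha + 1$, interpret $A_\alpha$ as a subset of $|M_\alpha|$ and inspect $R^{M_\alpha}$ restricted to $A_\alpha \times A_\alpha$. If $R$ there is symmetric, transitive, and displays infinitely many distinct classes, I choose sabotaging data in a sufficiently $L^\omega$-saturated extension $M^* \succ_{L^\omega} M_\alpha$ of a model of $T^*$: specifically, elements $b, c \in M^*$ and $a_1, a_2 \in A_\alpha$ from distinct current classes such that $R(a_1, b)$ and $R(b, a_2)$ hold but $\neg R(a_1, a_2)$, ensuring that any eventual uncountable $A \supseteq A_\alpha \cup \{b, c\}$ fails the transitivity requirement for $R$ on $A$. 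I close $M_\alpha \cup \{b, c, \alpha\}$ under the Skolem functions of $T^*$ inside $M^*$ to form $M_{\alpha+1}$. The $\diamondsuit$-property guarantees that every uncountable $X \subseteq \omega_1$ is guessed stationarily often, so repeated sabotage prevents any uncountable $A \subseteq |M|$ from witnessing $(Q^E x, y) R(x, y)$.

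The main obstacle is twofold. First, the sabotaging data must genuinely exist in $M^*$: this requires showing that the $L^\omega$-type over $M_\alpha$ asserting the desired $R$-pattern for $b, c$ is finitely consistent with $T^*$, which is a preservation statement analogous to the chain-condition arguments driving the preceding theorem on the $Q^n$-hierarchy (in particular exploiting that the $Q^n$-Skolem axioms do not force $R$ restricted to a countable set to extend to a $Q^E$-configuration). Second, the sabotage carried out at stage $\alpha$ must remain effective at all later stages, since future elements might heal the transitivity failure via a different path; showing that stationarily many sabotage stages collectively destroy every would-be witness, not merely those whose initial segments are exactly guessed, demands careful bookkeeping of class-configurations across the construction, and this is the technical heart of the argument.
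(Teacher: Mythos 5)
First, note that the source in front of us is an abstracts collection: (B118) states the theorem but gives no proof, only the remark that the argument of [Sh:118] runs through forcing notions satisfying one variant of the countable chain condition but not another, together with preservation of $L^n$-satisfaction under the appropriate chain condition. So your proposal has to stand on its own, and it does not: there is a genuine gap at its core. Your sabotage mechanism --- adjoining elements $b,c$ with $R(a_1,b)$, $R(b,a_2)$, $\neg R(a_1,a_2)$ for $a_1,a_2\in A_\alpha$ --- does nothing to a candidate witness $A\supseteq A_\alpha$ unless $b\in A$. The relation $E_{A,\varphi}$ is by definition $R\restriction(A\times A)$, so elements of $M\setminus A$ are invisible to the requirement that $E_{A,\varphi}$ be an equivalence relation with uncountably many uncountable classes; an adversary simply chooses $A$ to avoid every element you ever added. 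What is actually needed at a guessing stage $\alpha$ is the opposite kind of move: a \emph{sealing} commitment that no element introduced after stage $\alpha$ coheres with the guessed configuration $A_\alpha$ in the way an uncountable witness would force (e.g.\ no later $d$ is $R$-related to a tail of one of the infinitely many infinite classes of $A_\alpha$ and $R$-unrelated to the others), so that a genuine witness $A$ with $A\cap\alpha=A_\alpha$ could not have each class unbounded. Proving that such a promise can be kept at stationarily many stages \emph{while still satisfying} $T$ --- whose positive $Q^n$-clauses keep forcing you to add elements realizing prescribed types --- is exactly the content the chain-condition analysis of the real paper supplies, and it is absent from your sketch.

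A second gap: you cannot dispose of "$M\models T$" by Skolemizing and invoking a Tarski--Vaught criterion at limits. The Magidor--Malitz quantifiers $Q^n$ admit no finitary Skolem functions (a positive $Q^n$-statement asserts the existence of an uncountable homogeneous set), and unions of $L^n$-elementary chains of countable models are not $L^n$-elementary: a sentence $\neg(Q^n\bar x)\psi$ true in every $M_\alpha$ can fail in $\bigcup_\alpha M_\alpha$. Handling the negative occurrences of the $Q^n$ in $T$ requires its own $\diamondsuit$-sealing (this is the whole point of the Magidor--Malitz completeness theorem under $\diamondsuit_{\aleph_1}$), and it must be interleaved with, and shown compatible with, the sealing that kills $Q^E R$. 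As written, your construction neither builds a model of $T$ nor destroys all $Q^E$-witnesses; the overall contradiction frame is the right one, but both halves of the engine are missing.
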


A more general version of the second theorem is proved in the paper.
\bigskip

\noindent
(B119) \quad \underline{Iterated forcing and changing cofinalities}, IJM 40(1981), 1-32.
\bigskip

\noindent
(B120) \quad \underline{Free limits and Aronszajn trees}, IJM 38(1981) 33-38.

Our main result is the consistency of the ``ZFC + Souslin hypothesis + not
every Aronszajn tree is special."

For this end we introduce some variant of being special, investigate the
connections between them, and for one ``being $S$-special" ($S \subseteq
\omega_1$ stationary co-stationary) we prove that if we $S$-specialize all
Aronszajn trees (by iterated forcing), some ``bad" tree we specify does not
become special; as some related property of the forcing is preserved.

We use (as clearer though CS is O.K.) 
an iteration in which we take a kind of limit called free (it is even
bigger than inverse limit).
\bigskip

\noindent
(B121) \quad \underline{On the standard part of non-standard models 
of set theory}, JSL 48(1983), 33-38 (with M. Magidor and J. Stavi).

We characterize the ordinals $\alpha$ of uncountable cofinality such that
$\alpha$ is the standard part of a non-standard model of ZFC iff 
$\alpha$ is weakly compact in some sense; more exactly:
\mn
\begin{enumerate}
\item[$(*)$]   there is $\gamma > \alpha$ such that:
\sn
\begin{enumerate}
\item[(a)] for arbitrarily large $\beta < \gamma$, $\bfL_\beta$ is a model of ZFC
\sn
\item[(b)]  in $L_\gamma,\alpha$ has the tree property, i.e., if
whenever $T \in L_\gamma$, is a tree $\subseteq \alpha$, of height $\alpha$
and each level has $A$-cardinality $< \alpha$, and the function $\beta
\mapsto$ level of $\beta$ in $T$, is in $A$ \then \, $T$ has a branch of
length $\alpha$ in $A$.
\end{enumerate}
\end{enumerate}
\bigskip

\begin{remark}\label{b121}
We can replace $\bfL_\beta \models$ ZFC by $\bfL_\beta \models$ KP.
\end{remark}
\bigskip

\noindent
(B122) \quad \underline{On Fleissner diamond}, NDJFL, 22(1981), 29-35.

We prove that $ZFC + GCH + \diamondsuit^+_\lambda$ is consistent with 
(where $1 < \kappa < \lambda$, $\lambda$ regular $> \aleph_0)$: there are
$S_{\alpha,\beta} \subseteq \alpha$ (for $\alpha < \lambda,\beta < \kappa$) such
that:
\mn
\begin{itemize}
\item   $\big\{\{S_{\alpha,\beta}:\beta < \kappa\} : \alpha < \lambda\big\}$
is a diamond sequence, i.e., for every $A \subseteq X$ for stationarily many
$\alpha < \lambda$, $A \cap \alpha \in \{S_{\alpha,\beta}:\beta < \kappa\}$
\sn
\item   for no function $f:\lambda \rightarrow \kappa$ is $\langle
S_{\alpha,f(\alpha)}:\alpha < \lambda \rangle$ a diamond sequence; moreover,
even $\big\LL \{S_{\alpha,\beta} : \beta < \kappa\} \setminus 
\{S_{\alpha, f(\alpha)} : \alpha < \kappa\} \big\RR$ is not a diamond sequence.
\end{itemize}
\bigskip

\noindent
(B123) \quad \underline{Monadic theory of order and topology in ZFC}, AML 23(1982), 179-198 (with Yuri Gurevich).

True first-order arithmetic is interpreted in the monadic theory of the real
line, in the monadic theory of any non-modest short chain, in the monadic
theory of Cantor Discontinuum, in the monadic theory of any non-modest
metrizable space.  It was known that existence of such interpretations is
consistent with ZFC.
\bigskip

\noindent
(B124) \quad \underline{$\aleph_\omega$ may have a strong partition 
relation}, IJM 38(1981), 283-288.

We prove the consistency of ZFC + GCH with the following (assuming 
con(ZFC) + there are $\omega$ measurable cardinals):
\mn
\begin{enumerate}
\item[$(*)$]   for any function $f$ from finite subsets of $\aleph_\omega$
to $\omega$, there are pairwise disjoint $S_n \subseteq \aleph_\omega,
|S_n| = \aleph_n$ such that:
\newline
\qquad if $u,v$ are finite subsets of $\aleph_\omega$ and 
$(\forall n)\big[|S_n \cap u| = |S_n \cap v| \le n \big]$ \then \, $f(u) = f(v)$.
\end{enumerate}
\mn
Note that $(*)$ implies: every Banach space of power $\ge \aleph_\omega$
contains an unconditional (infinite) basis.
\bigskip

\noindent
(B125) \quad \underline{The consistency of $\Ext(G,\bbZ) = \bbQ$}, 
IJM 39(1981), 82.

For abelian groups, if $\bfV = \bfL$, $\Ext(G,\bbZ)$ cannot have cardinality
$\aleph_0$.  We show that GCH does not imply this (hence some complete
$\bbL_{\infty,\aleph_1}$ sentence has exactly $\aleph_0$ non-isomorphic models
of power $\aleph_1$, but not proved here).  (Compare [Sh:91]).

The proof continues in \cite{Sh:64}.
\bigskip

\noindent
(127) \quad \underline{On Boolean Algebras with no uncountably 
many pairwise comparable or}\\ \underline{uncomparable elements}, NDJFL 22(1981), 301-308.

We shall prove, assuming CH, the existency of a Boolean algebra of power
$\aleph_1$, having neither uncountable set of pairwise comparable elements
nor uncountable set of pairwise incomparable elements.

We finish commenting on generalizations to higher cardinals.
\bigskip

\noindent
(B128) \quad \underline{Uncountable constructions}, IJM 51(1985), 273-297.

We suggest a principle which unites many proofs from 
$\diamondsuit_{\aleph_1}$, and is similar to forcing and deduce from 
it several new results.  The paper is supposed to help non-set theorists to
build counterexamples by reducing the problems to questions on finitely
generated objects: (assuming $\diamondsuit_{\aleph_1}$).

\textbf{Boolean Algebras:} 

\noindent
1)  For pedagogical reasons we build a Rubin Boolean Algebra. 

\noindent
2)  There is a Boolean Algebra which is not 1-Rubin but among any $\aleph_1$
elements there are two comparable ones (even $a,b,c$ with $a \cap b = c$).
\mn

\textbf{E.C. Groups:}  For any countable e.c.g. and uncountable $K$, there is an
uncountable $H \equiv {}_{\infty,\omega}G$, such that $K$ cannot be 
embeddable into $H$.

\textbf{Banach spaces:}  There is a non-separable Banach space $B$ so that for any
non-separable subspace $B_1$, $B/B_1$ is separable.
\bigskip

\noindent
(B129) \quad \underline{The number of non-isomorphic 
models of cardinality $\lambda$, $\bbL_{\infty,\lambda}$ equivalent} 
\underline{to a fixed model}, NDJFL, 22(1981), 5-10.

We show that if $\bfV = \bfL$ and $\lambda$ is regular but not weakly compact, then
if $M$ is a model of cardinality $\lambda$ then 
$\{N/{\cong} : N \equiv_{\infty,\lambda} M,\ \|N\| = \lambda\}$ 
has either one number or $2^\lambda$ members.

We do not rely on the full power of $\bfV = \bfL$ but use a combinatorial
consequence for rgular non-weakly compact.  The use of 
some set theoretical assumption stronger then GCH
is necessary as in \cite{Sh:125} we construct a model of ZFC + GCH where a
sentence $\psi \in \bbL_{\infty,\omega_1}$ has $\aleph_0$ non-isomorphic models
of cardinality $\aleph_1$.  Also the restriction on $\lambda$ to be not
weakly compact is not a coincidence; see \cite{Sh:133}.
\bigskip

\noindent
(B130) \quad \underline{Stability over a predicate}, NDJFL, 16(1985), 361-376 (with A. Pillay).

This is a beginning of a theory suppose to measure how much $M \restriction
P$ determines $M$, for this we look at $\{M:M \restriction P = N\}$ 
and count the number of isomorphism types (over $N$).  In the cases 
the number is not big, we have
parallels to some of the theorems on stable theories.
\bigskip

\noindent
(B133) \quad \underline{On the number of non-isomorphic 
models in $\bbL_{\infty,\kappa}$ when $\kappa$ is weakly}\\ \underline{compact}, NDJFL, 23(1982) 21-26.

We construct for $\kappa$ weakly compact a model $M$ of cardinality $\kappa$
which has exactly $\theta$ non-isomorphic models $M \equiv_{\infty,\kappa} M$
of cardinality $\kappa$, for any $\theta \le \kappa$ we want.  This 
together with \cite{Sh:129} completes the answer of this problem when $\bfV = \bfL$.
\bigskip

\noindent
(B135) \quad \underline{Rigid homogeneous chains}, Math. Proc. of Cambridge Philo. Soc. 87(1981), 7-17 (with A.M.W. Glass, Yuri Gurevich and W.C. Holland).

A chain $C$ is \emph{rigidly homogeneous} if 
for any two points $a,b$ of $C$ there is a unique automorphism $f$ of 
$C$ such that $f(a) = b$.  The group of automorphisms of a rigidly 
homogeneous chain $C$ is isomorphic to a subgroup of the additive 
group of reals.  We classify rigidly homogeneous chains by elementary 
properties of their automorphism groups.

The chain of integers is the only countable rigidly homogeneous chain.
The cardinality of every rigidly homogeneous chain is $\le 2^{\aleph_0}$.
Ohkuma constructed rigidly homogeneous chains of cardinality
$2^{\aleph_0}$.  We prove consistency of existence as well as consistency of
non-existence of rigidly homogeneous chains of uncountable cardinality
$< 2^{\aleph_0}$.
\bigskip

\noindent
(B136) \quad \underline{Construction of many complicated 
uncountable structures and Boolean}\\ \underline{Algebras}, IJM, 45(1983), 100-146.

This paper has three aims:

\noindent
1) To make the results of \cite[Ch.VIII]{Sh:a} on constructing models more
available for applications, by separating the combinatorial parts: for
this we have to set up a suitable framework.  We believe it is useful for
proving things like: there are many non-isomorphic structures, there are
many structures no one embeddable into the other, there are rigid structures
or undecomposable ones or structures with few endomorphisms, etc.

There are few kinds of structures (mainly trees) which served as index sets,
and we have many complicated such structures.  In an application we have in
some sense to interpret those structures in the one we are interested in. 

\noindent
2) A second aim is to strengthen the results of \cite[Ch.VIII]{Sh:a}.  In
particular we mainly are interested there in showing that there are many
non-isomorphic models of an unsuperstable theory.  We got many times results
on the number of models not elementarily embeddable in each other, being a
side benefit.  Here we consider the later case in more detail, and in a 
somewhat stronger combinatorial results needed later.

We also consider some more kinds of index structures 
(mainly a tree of pairs).

\noindent
3) We apply the previous results to problems on Boolean Algebras 
from the list of van Douwen, Monk, and Rubin.  

In particular
\mn
\begin{enumerate}
\item[(a)]   if $\lambda = \lambda^{\aleph_0}$ there is a complete
Boolean Algebra of power $\lambda$ satisfying the c.c.c. with no non-trivial
one-to-one endomorphism
\sn
\item[(b)]  if we wave ``complete" then is such $B$ of any power
$\lambda > \aleph_0$
\sn
\item[(c)]   in any $\lambda > \aleph_0$ there is a Bonnet-rigid 
Boolean Algebra.
\end{enumerate}
\mn
4) To solve some problems from that list, unconnected to 1) and 2):
\mn
\begin{enumerate}
\item[(a)]   if a Boolean Algebra $B$ has no dense subset of power
$< \lambda$ then it has a $\lambda$ pairwise incomparable element
\sn
\item[(b)]   $(\diamondsuit_{\aleph_1})$ there is a Boolean Algebra of
power $\aleph_1$ satisfying many rigidity conditions: Bonnet rigid endo-rigid
indecomposable.
\end{enumerate}
\mn
The idea of this proof is to generalize Keisler criterion for omitting types
(specific to atomless Boolean Algebras) to the case an ultrafilter is given
in each countable stage of the construction. 
l
Continued in \cite{Sh:e}.
\bigskip

\noindent
(B137) \quad \underline{The Singular cardinals problem: 
independence results}, London Math. Soc. Lecture Note Ser. 87(1983) 116-134.

Assuming the consistency of a supercompact cardinal, we prove the consistency
of:
\mn
\begin{itemize}
\item  $\aleph_\omega$ strong limit $2^{\aleph_\omega} = \aleph_{\alpha +1}$,
$\alpha < \omega_1$ arbitrary;
\sn
\item  $\aleph_{\omega_1}$ strong limit, $2^{\aleph_{\omega_1}} =
\aleph_{\alpha +1}$, $\alpha < \omega_2$ arbitrary;
\sn
\item  $\aleph_\delta$ strong limit, $\cf(\delta) = 
\aleph_0$, $2^{\aleph_\delta}$ arbitrarily large before the first 
inaccessible cardinal; for $\aleph_\delta$ ``large enough".
\end{itemize}
\mn
We start with $\kappa$ supercompact, e.g. an $\omega$-sequence
$\langle \kappa_n : n < \omega \rangle$ through it and collapse to make
$\kappa$ to $\aleph_\omega$.  The new point is that we change our mind 
on what is $\bfV$ --- it becomes larger when we determine $\kappa_n$ for bigger
$n$.
\bigskip

\noindent
(B138) \quad \underline{On the structure of $\Ext(A,\bbZ)$ in ZFC}, 
JSL, 50(1985), 302-315 (with G. Sageev).

We prove:
\mn
\begin{itemize}
\item  (GCH) for any $\aleph_0 < \kappa < \aleph_\omega$, and cardinals
$\nu_p \le \kappa^+ $ (for $0 < p < \omega$ prime) there is a $\kappa$-free abelian
group of power $\kappa$, $\nu_0(G) = \kappa^+$, $\nu_p(G) = \nu_p$ (see
\cite{Sh:91}).
\end{itemize}
\bigskip

\noindent
(B139) \quad \underline{On the number of non-conjugate subgroups}, 
Algebra Universalis, 16(1983), 131-146.

Let $G$ a given group of cardinality $\lambda \, (\lambda > \aleph_0)$ and
$\mathrm{nc}(G)$ the number of non-conjugate subgroups of $G$, so we are interested
in lower bound of the number of equivalence relations.

We prove for many uncountable cardinals $\lambda$ that if $|G| = \lambda$
then $\mathrm{nc}(G) \ge \lambda$, (e.g. when GCH holds). 

This is proved for $\aleph_0 < \lambda < 2^{\aleph_0}$ (for $\lambda =
\aleph_1$ assuming $2^{\aleph_0} = \aleph_1$ by \cite{Sh:69} this is best
possible); for $\lambda$ such that $\forall \mu[\lambda \ne 2^\mu]$.  For
$\lambda = 2^\mu$ and $\mu$ not regular non-strong limit and $\mathrm{Ded}(\mu) > 
\lambda$.  It is an open problem if the theorem is true for other cardinals.
Continued in \cite{Sh:192}.
\bigskip

\noindent
(B140) \quad \underline{On endo-rigid $\aleph_1$-strongly free 
abelian groups of power $\aleph_1$}, IJM, 40(1981), 291-295.

We prove that if $2^{\aleph_0} < 2^{\aleph_1}$ then there is an abelian group
of power $\aleph_1$, which is strongly $\aleph_1$-free but endo-rigid
where:
\mn
\begin{itemize}
\item   $G$ is endo-rigid iff every homomorphism $h:G \rightarrow G$
has the form $h(x) = n(x)$ for every $x$; (for some integer $n$).
\end{itemize}
\bigskip

\noindent
(B141) \quad \underline{The monadic theory of $\omega_2$}, JSL 48(1983), 387-398 
(with Y. Gurevich and M. Magidor).

Assume consistency of ZFC plus existence of a weakly compact cardinal.  Then
\mn
\begin{enumerate}
\item[(i)]   for every $S \subseteq \omega$, ZFC plus ``$S$ and the
monadic theory of $\omega_2$ are recursive in each other" is consistent,
and
\sn
\item[(ii)]   ZFC plus ``the full second-order theory of $\omega_2$ is
interpretable in the monadic theory of $\omega_2$" is consistent.
\end{enumerate}
\bigskip

\noindent
(B142) \quad \underline{The structure of saturated free algebras}, 
Algebra Universalis, 17(1983), 191-199 (with J. Baldwin).

Let $V$ be a variety with countable similarity type.  Suppose $M$ is an
uncountable algebra in $V$ which is both free and saturated. 

\begin{lemma}\label{b142.0}
$\Th(M)$ is $\aleph_0$-stable.
\end{lemma}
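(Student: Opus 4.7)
The plan is to exhibit an automorphism group of $M$ that fixes any prescribed countable parameter set and has only countably many orbits on $M$; by saturation this forces the type space over any countable set to be countable, and hence $\Th(M)$ to be $\aleph_0$-stable.

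First I would fix a free basis $X$ of $M$ in $V$. Since the similarity type is countable and $M$ is uncountable, $|X| = |M| > \aleph_0$. Given a countable $A \subseteq M$, each element of $A$ is represented by a term in finitely many basis elements (free algebras are colimits of their finitely generated subalgebras), so one can choose a countable $X_0 \subseteq X$ with $A \subseteq \langle X_0 \rangle_M$.

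Next, I would exploit freeness: every permutation of $X$ extends uniquely to an automorphism of $M$. Let $G$ be the group of automorphisms of $M$ coming from permutations of $X$ that fix $X_0$ pointwise. Each $\sigma \in G$ then fixes $\langle X_0 \rangle_M$, hence $A$, pointwise, so elements of $M$ in the same $G$-orbit realize the same complete type over $A$. The main step is to bound the number of $G$-orbits on $M$ by $\aleph_0$. Any $b \in M$ has the form $t^M(\bar x)$ for some term $t$ in $n$ variables and some $\bar x \in X^n$, and $\sigma \in G$ sends $t^M(\bar x)$ to $t^M(\sigma \bar x)$. Hence a $G$-orbit is determined by $t$ together with the orbit of $\bar x$ under the coordinatewise action of $\mathrm{Sym}(X \setminus X_0)$; the latter is parametrized by the data ``which coordinates lie in $X_0$ and which element of $X_0$ they equal, plus the equality pattern among coordinates lying in $X \setminus X_0$''. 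Since $X_0$ is countable this yields only countably many orbits for each fixed arity $n$, and since the language is countable there are only countably many terms $t$. Summing gives at most $\aleph_0$ orbits of $M$ under $G$.

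Because $M$ is saturated and $|A| < |M|$, every $p \in \bfS(A)$ is realized in $M$; and since $G$-equivalent realizations give the same type, $|\bfS(A)| \leq \aleph_0$. As $A$ was an arbitrary countable subset of $M$, $\Th(M)$ is $\aleph_0$-stable. The main delicacy is the orbit-counting step: one must keep track that elements of the free algebra really admit a representation $t^M(\bar x)$ with $\bar x$ from the basis $X$, and that extensions of basis permutations to automorphisms commute with term evaluation; once this is set up, the count is elementary bookkeeping.
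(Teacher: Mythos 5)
Your proof is correct and is essentially the intended argument for this lemma: use freeness to extend permutations of the basis fixing a countable $X_0$ (supporting $A$) to automorphisms of $M$ over $A$, count the orbits on $M$ by the pair (term, pattern of the generating tuple relative to $X_0$) to get at most $\aleph_0$ of them, and invoke saturation to see that every type over $A$ is realized in $M$, whence $|\bfS(A)|\le\aleph_0$. The only step left tacit is the standard transfer from countable subsets of the saturated model $M$ to countable subsets of arbitrary models of $\Th(M)$, which is immediate because $M$ is $\aleph_1$-saturated and so every countable parameter set from any model of $\Th(M)$ is elementarily copied into $M$.
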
 
\medskip

\noindent
\textbf{Theorem 142.1.}\label{b142.1}  There exists a finite set $q_1,q_2,\dotsc,q_\kappa$ of types (over
finite sets) each with weight one such that there exist infinite sets of
indiscernibles $Y_1,\dotsc,Y_n$ each $Y_i$ based on some $q_i$ such that $M$
is generated as an algebra by $Y \cup \ldots \cup Y_n$.
\bigskip

\noindent
(B143) \quad \underline{The monadic theory and the 
``next world"}, IJM 49(1984), 55-68 (with Yuri Gurevich).

Let $R$ be the real line in a model $\bfV$ of ZFC and $B$ be the Boolean Algebra
of regular open subsets of $R$ in $\bfV$.  Let $V^B$ be the corresponding 
Boolean valued model of ZFC.
\medskip

\noindent
\textbf{Theorem 143.1.}\label{b143.1}  There is an algorithm interpreting the full second-order
$\bfV^B$-theory of $\aleph_0$ in the monadic $\bfV$-theory of $R$.
\medskip

\noindent
\textbf{Theorem 143.2.}\label{b143.2}  There is an algorithm interpreting the full second-order
$\bfV^B$-theory of $2^{\aleph_0}$ in the monadic $\bfV$-theory of $R$ if $\bfV$
satisfies the Continuum Hypothesis. 

Theorems 1 and 2 are results of applications of a general interpretation
theorem.  

Continued in \cite{Sh:284a}.
\bigskip

\noindent
(B217) \quad  \underline{There are Noetherian domains in every cardinality with free additive}\\ \underline{groups}, Abstracts Amer. Math. Soc. 7(1986), 369 (with G. Sageev).

(Based on Notices AMS 86T-03-269)
\bigskip

\begin{theorem}\label{217.1}
There are Noetherian rings (in fact domains) with a free
additive group, in every infinite cardinality.
\end{theorem}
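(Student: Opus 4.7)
For the countable case $\lambda = \aleph_0$ the polynomial ring $\bbZ[x]$ works: by Hilbert's basis theorem it is a Noetherian domain, and its additive group has free $\bbZ$-basis $\{x^n : n \ge 0\}$.

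For uncountable $\lambda$, I would build $R$ by a transfinite recursion of length $\lambda$ as a continuous increasing union $R = \bigcup_{\alpha < \lambda} R_\alpha$ of Noetherian subdomains starting from $R_0 = \bbZ[x]$, with $|R_\alpha| < \lambda$. Each $R_{\alpha+1}$ would extend $R_\alpha$ by a carefully controlled algebraic step---finite and free whenever possible---chosen so that $R_{\alpha+1}$ remains a Noetherian domain and a free $R_\alpha$-module; at limit ordinals one takes unions, so that inductively every $R_\alpha$ is a Noetherian domain free as a $\bbZ$-module.

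Freeness of the union $R$ as an abelian group would then be obtained in two regimes: for regular $\lambda$, by patching $\bbZ$-bases along the chain (using that each $R_{\alpha+1}/R_\alpha$ is free and passing through limits by continuity); and for singular $\lambda$, by invoking Shelah's singular compactness theorem \cite{Sh:52}, once we verify that every subgroup of $R$ of cardinality $< \lambda$ is free (which is arranged by the construction at smaller levels).

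The main obstacle, and the technical heart of the argument, is ensuring that the union itself is Noetherian. Directed unions of Noetherian rings typically are not Noetherian: the classical non-example is the ring $\overline{\bbZ} = \bigcup_K \mathcal{O}_K$ of algebraic integers in $\overline{\bbQ}$, an infinite union of Dedekind domains in which ACC fails. To prevent this, the successor extensions must be chosen via a bookkeeping scheme enumerating, along the recursion, all potential infinite ascending chains of ideals that could arise in $R$; at stage $\alpha$ the new relations are selected so as to truncate the currently threatened chain, absorbing its generators into a finitely generated ideal of $R_{\alpha+1}$. Reconciling this bookkeeping with the simultaneous demands that each $R_{\alpha+1}$ remain a domain and a free $\bbZ$-module, and that the limit behave well with respect to singular compactness when $\lambda$ is singular, is the principal difficulty and where the novelty of the construction must lie.
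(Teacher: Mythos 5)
Your plan correctly isolates the danger point --- a directed union of Noetherian domains is usually not Noetherian --- but the device you propose for getting around it does not work, and this is exactly where a proof would have to live. The ascending chains of ideals that threaten ACC are chains of ideals of the \emph{final} ring $R$, not of the approximations $R_\alpha$, and there can be $2^\lambda$ of them; so no bookkeeping of length $\lambda$ can enumerate them in advance, and at stage $\alpha$ you cannot yet see which chains will need ``truncating.'' Moreover, absorbing the generators of an infinitely generated ideal into a finitely generated ideal of $R_{\alpha+1}$ means imposing new relations, which is in direct tension with your simultaneous requirements that $R_{\alpha+1}$ stay a domain and a free $\bbZ$-module (indeed a free $R_\alpha$-module); you offer no mechanism reconciling these demands, and none is known. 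As written, the proposal names the central difficulty rather than resolving it. (A smaller point: the case split between regular and singular $\lambda$ in the freeness argument is unnecessary --- a continuous increasing chain of groups with free successor quotients has free union for any ordinal length, so singular compactness is not needed once the filtration exists.)

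The paper sidesteps the Noetherian issue entirely by not constructing the ring at all: it takes one explicit ring $R_X \subseteq \bbZ(X)$, the additive subgroup generated by fractions $p/q$ of polynomials over $\bbZ$ in a set $X$ of variables of the desired cardinality, subject to a primitivity condition --- in effect a Nagata-style localization of $\bbZ[X]$ at the primitive polynomials. That this is a Noetherian domain is \emph{quoted as known}; it is a classical fact about such localizations, valid for infinite $X$ even though $\bbZ[X]$ itself is then non-Noetherian. All the genuine work (Lemma \ref{b217.3} and Claim \ref{217.4}) goes into the part your outline treats as routine: proving $R^+_X$ is free. This is done by showing the quotients $R^+_{W\cup Y}\big/\sum_{\ell} R^+_{W(\ell)\cup Y}$ are free by induction on $|Y|$ via a continuous filtration $Y_\alpha$ of the variables, with the countable base case handled by a Pontryagin-type criterion (torsion-freeness plus a bounded-divisibility condition) verified through an alternating-sum substitution argument with the maps $h_u$. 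If you want to salvage your outline, replace the transfinite construction by this fixed ring; your filtration then appears naturally as $\langle R_{Y_\alpha}\rangle$, and the problem reduces to the freeness of the successor quotients, which is where the real content lies.
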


\begin{remark}\label{217.2}
1) For $\aleph_1$ this was proved by O'Neill. 

\noindent
2) The work was done in Sept., '83. 

\noindent
3) We thank Fuchs for suggesting to us the problem.
\end{remark}

\begin{proof}
\underline{Sketch of Proof}  

Let $\bbZ$ be the ring of integers, $X$ a set of distinct
variables, $\bbZ[X]$ the ring of polynomials over $\bbZ$, $\bbZ(X)$ its field of
quotients, and $R_X$ the additive subgroup of $\bbZ(X)$ generated by
$$\big\{p/q : p,q \in \bbZ[X],\ p \text{ not divisible (non-trivially)
by any integer}\big\} \subseteq \bbZ(X).$$  
It is known that $R_X$ is a Noetherian domain.
Let, for a ring $R$, $R^+$ be its additive group.  For $Y \subseteq X$ we can
define $\bbZ[Y]$, $\bbZ(Y)$, $R_Y$ similarly.
\end{proof}

\begin{lemma}\label{b217.3}
1) $R^+_X$ is a free abelian group. 

\noindent
2) If $n \ge 0$, $Y \subseteq X$, $x(1),\dotsc,x(n) \in X \setminus Y$ 
pairwise distinct, $W = \{x(1),\dotsc,x(n)\}$, $W(\ell) = W - \{x(\ell)\}$ \underline{then} $R^+_{W \cup Y}\Big/\sum\limits^{n}_{\ell =1} R^+_{W(\ell) \cup Y}$ is a free abelian group.
\end{lemma}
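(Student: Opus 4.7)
The plan is to exhibit, for every $Z \subseteq X$, a $\bbZ$-basis $B_Z$ of $R_Z^+$ chosen coherently so that $Z_1 \subseteq Z_2 \Rightarrow B_{Z_1} \subseteq B_{Z_2}$, and so that each $b \in B_X$ carries a well-defined support $\supp(b) \subseteq X$ (a finite set of variables) satisfying $b \in R_Z$ iff $\supp(b) \subseteq Z$. Given such a coherent family, part (1) is immediate (taking $Z = X$ and noting $R_X = \bigcup_{Y \text{ finite}} R_Y$), and for part (2) we observe that $R^+_{W(\ell) \cup Y}$ is spanned over $\bbZ$ by $\{b \in B_{W \cup Y} : x(\ell) \notin \supp(b)\cap W\}$, so $\sum_{\ell=1}^{n} R^+_{W(\ell) \cup Y}$ is spanned by $\{b \in B_{W \cup Y} : W \not\subseteq \supp(b)\}$. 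Being a subset of a basis, these elements span a direct summand of $R^+_{W \cup Y}$, and the quotient is therefore freely generated by the complementary basis elements $\{b \in B_{W \cup Y} : W \subseteq \supp(b)\}$.

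It remains to construct $B_Y$ for finite $Y$ and verify coherence. The approach is to exploit the UFD structure of $\bbZ[Y]$ together with Gauss's lemma (products of primitive polynomials are primitive) to perform a $\bbZ$-refined partial-fractions decomposition of each element of $R_Y$. Basis elements will be monomials in the variables of $Y$ (giving the $\bbZ[Y]$-part) together with suitable ``primitive fractions'' $a/q$ where $q \in \bbZ[Y]$ is primitive (not necessarily a prime power) and $a$ is a primitive numerator chosen from a carefully selected residue system. If these choices are made uniformly in $X$ (for example, using a fixed well-ordering of the variables to normalise associate classes of primitive irreducibles), coherence across $Y \subseteq Z$ is automatic: a primitive irreducible in $\bbZ[Y]$ remains primitive irreducible in $\bbZ[Z]$ by Gauss, and the support of $a/q$ is the finite set of variables appearing in $a$ and $q$.

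The hard part is the basis construction itself. A naive candidate --- monomials together with $\{a/f^k : f \text{ primitive irreducible},\ k \ge 1,\ a \text{ a residue mod } f\}$ --- fails to span $R_Y^+$ over $\bbZ$: for instance, $1/\bigl(x(x+2)\bigr) \in R_Y$ but its $\bbQ$-partial-fraction expansion $\tfrac{1}{2}/x - \tfrac{1}{2}/(x+2)$ has non-integer coefficients, so composite-denominator fractions must enter the basis in their own right. The combinatorial task is to choose a maximal $\bbZ$-linearly independent family of such primitive-denominator fractions while retaining spanning. Here the content-multiplicativity $c(pq) = c(p)c(q)$ is the decisive tool: any alleged $\bbZ$-linear relation among fractions $a_i/q_i$ (with $a_i, q_i$ primitive) can be cleared to a polynomial identity whose content-balance on both sides forces the coefficients to be divisible in a controlled way, and this exactly characterises which combinations of fractions are already expressible from ``simpler'' ones. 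Once the basis is constructed by this content-guided selection and verified to span and to be $\bbZ$-independent, both parts of the lemma follow from the argument of the first paragraph.
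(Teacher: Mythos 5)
Your reduction is clean: \emph{if} a coherent, support-graded family of bases $B_Z$ exists, both parts follow exactly as you say, and your observation that the naive prime-power partial-fraction candidate fails to span over $\bbZ$ (e.g. $1/x - 1/(x+2) = 2\cdot\frac{1}{x(x+2)}$ is divisible by $2$ in $R_{\{x\}}$) is precisely the right warning sign. But the proposal stops where the lemma begins. ``Choose a maximal $\bbZ$-linearly independent family of primitive-denominator fractions while retaining spanning'' is not a construction: a maximal independent family spans only a full-rank subgroup, and the assertion that one can be chosen which also spans \emph{is} the assertion that the group is free. The content-multiplicativity remark points at the relevant divisibility analysis but supplies no argument that the obstructions actually vanish. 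Compare the paper's Claim, which verifies a Pontryagin-type criterion for $G/I$ (torsion-freeness plus finiteness of the set of integers dividing a primitive combination of independent elements) by a genuine device --- the inclusion--exclusion $b_i = \sum_u (-1)^{|u|} h_u(a_i)$ over substitution homomorphisms, which kills the contributions $\sum_u h_u(p_\ell)$ from each ideal $I_\ell$ and transfers the divisibility question into a group already known to be free. Nothing in your sketch plays that role.

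Independently, the claim that coherence of the $B_Y$ over all finite $Y \subseteq X$ is ``automatic'' once irreducibles are normalized is unjustified. Normalizing associate classes only fixes which fractions you may write down; it does not make the $\bbZ$-linear selection made for $Y$ extend to a valid selection for $Z \supseteq Y$, let alone compatibly over the whole directed system when $X$ is uncountable. That compatibility is exactly the $n$-dimensional freeness encoded by part (2) for general $n$ (the paper calls it a degenerate case of its reference [Sh:87a]), and it is why the paper proves part (2) for all $n$ simultaneously by induction on $|Y|$, filtering by the $Y_\alpha$ and showing each successive quotient $G_\alpha$ is free before invoking the countable case; an uncountable union of free groups is not free merely because consecutive quotients are (the standard $\aleph_1$-free-not-free phenomenon). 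So your architecture, if completed, would be a genuinely different and more explicit route than the paper's filtration-plus-Pontryagin argument, but as written the existence of the coherent basis --- which carries the entire content of the lemma --- is assumed rather than proved.
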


\begin{proof}
1) Follows by 2) for $n=0,Y=X$. 

\noindent
2) This is phrased because it is the natural way to prove 1) by induction
on $|Y|$, for all $n$ simultaneously (a degenerated case of
\cite{Sh:87a}).   If $|Y| > \aleph_0$, let
$Y = \{y(\alpha):\alpha < \lambda\}$ with no repetitions, so $\lambda = |Y|$, 
$Y_\alpha = \{y(i) : i < \alpha\}$.  It suffices for each $\alpha < \lambda$ to 
prove that $G_\alpha =: \big(R^+_{Y_{\alpha+1}} + \sum\limits^{n}_{\ell =1}
R^+_{Y \cup W(\ell)}\big) \Big/ \big(\sum\limits^{n}_{\ell =1}
R^+_{Y_{\alpha+1} \cup W(\ell)} + R^+_{Y_\alpha \cup W} \big)$ is free.  
We now show that $G_\alpha$ is
isomorphic to $G'_\alpha = R^+_{Y_{\alpha+1}} \Big/ \big(\sum\limits^{n}_{\ell =1}
R^+_{Y_{\alpha+1} \cup W(\ell)} + R^+_{Y_\alpha \cup W} \big)$.
For this it is enough to show $\bigl(\sum\limits^{n}_{\ell=1}
R^+_{Y_\alpha \cup W(\ell)} \bigr) \cap
R^+_{Y_{\alpha +1}} = \sum\limits^{n}_{\ell =1} R^+_{Y_{\alpha +1} \cup
W(\ell)}$, as the right side is included in the left side trivially we have to
show $$\sum^{n}_{\ell=1} {\frac{p_\ell}{q_\ell}} \in 
\sum\limits^{n}_{\ell =1} R^+_{Y_{\alpha +1} \cup W(\ell)}$$ if
${\frac{p_\ell}{q_\ell}} \in R^+_{Y_1 \cup W(\ell)}$ and
$\sum\limits^{n}_{\ell =1} {\frac{p_\ell}{q_\ell}} \in 
R^+_{Y_{\alpha +1} \cup W}$ which is easy by projections). 
But $G'_\alpha$ is free by induction hypothesis.  
\end{proof}
\bigskip

The next claim completes the case ``$y$ countable".
\begin{claim}\label{217.4}
If $Y \cup \{x(1),\dotsc,x(n)\} \subseteq X$, $x(\ell) \in X 
\setminus Y$ distinct, $G = R^+_{W \cup Y}$, $I = \sum\limits_{i} I_i$, 
$I_i = R^+_{W(i) \cup Y}$ \then \, $G/I$ is free, when $Y$ is countable.
\end{claim}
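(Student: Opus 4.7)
The plan is to prove the claim by induction on $|Y|$, with the inductive step handled by a filtration argument parallel to that of Lemma 217.3(2) and the base case $|Y| = 0$ handled by an explicit partial-fraction decomposition.

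Concretely, enumerate $Y = \{y_k : k < |Y|\}$, set $Y_k = \{y_j : j < k\}$, $G_k = R^+_{W \cup Y_k}$, $I_k = \sum_{\ell=1}^n R^+_{W(\ell) \cup Y_k}$, and $H_k = G_k/I_k$. The first step is the intersection identity $G_k \cap I = I_k$, which I would prove by a specialization argument: any $g \in G_k$ written as $\sum_{\ell,j} p_{\ell,j}/q_{\ell,j}$ (primitive $p_{\ell,j} \in \bbZ[W(\ell) \cup Y]$, nonzero $q_{\ell,j}$) is fixed by every specialization $\sigma : y_j \mapsto c_j \in \bbZ$ for $j \ge k$; choosing the $c_j$'s to avoid the finitely many zero-loci of the $q_{\ell,j}$'s (possible since $\bbZ$ is infinite and each $q_{\ell,j}$ is a nonzero polynomial in the $y_j$'s over $\bbZ(W \cup Y_k)$) one obtains $g = \sum_{\ell,j} \sigma(p_{\ell,j})/\sigma(q_{\ell,j})$, and extracting the integer content of each $\sigma(p_{\ell,j})$ expresses it as an integer multiple of a primitive fraction in $\bbZ[W(\ell) \cup Y_k]$, so each summand lies in $R^+_{W(\ell) \cup Y_k}$ and $g \in I_k$.

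This yields short exact sequences $0 \to H_k \to H_{k+1} \to H_{k+1}/H_k \to 0$, and the second isomorphism theorem together with the renaming $W' := W \cup \{y_k\}$, $W'(\ell) := W' \setminus \{x(\ell)\}$ for $\ell \le n$, $W'(n+1) := W$, identifies
\[
H_{k+1}/H_k \;\cong\; R^+_{Y_k \cup W'} \Big/ \sum_{\ell=1}^{n+1} R^+_{Y_k \cup W'(\ell)},
\]
an instance of the very same statement with $Y_k$ in place of $Y$ (strictly smaller in cardinality) and $n+1$ in place of $n$. By the inductive hypothesis each $H_{k+1}/H_k$ is free, each sequence splits, and $G/I \cong H_0 \oplus \bigoplus_k (H_{k+1}/H_k)$ is free as a direct sum of free abelian groups. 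This handles both the inductive step for finite $|Y|$ and the passage from all finite $|Y|$ to $|Y| = \aleph_0$.

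The main obstacle is then the base case $|Y| = 0$: showing $R^+_W \big/ \sum_{\ell=1}^n R^+_{W(\ell)}$ is free for every $n \ge 1$. I would produce an explicit $\bbZ$-basis via partial-fraction decomposition in $\bbZ(W)$: every class in the quotient is uniquely a finite $\bbZ$-linear combination of primitive fractions $\mu/q^e$, where $q \in \bbZ[W]$ is primitive and irreducible and genuinely depends on each of $x(1), \ldots, x(n)$ (so that $1/q$ does not lie in any $R^+_{W(\ell)}$), $e \ge 1$, and $\mu$ runs through a fixed monomial transversal of total degree less than $\deg q^e$. Linear independence over $\bbZ$ follows from the uniqueness of partial-fraction decomposition over $\bbQ(W)$, while the integrality/primitivity issues (keeping representatives inside $R^+_W$ and not merely $\bbQ(W)$) are handled by the same Gauss's-lemma and content arguments that underlie the proof of Lemma 217.3(1) itself.
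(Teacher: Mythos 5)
Your reduction of the countable case to $Y=\varnothing$ is sound and is essentially the filtration already used in the paper's proof of Lemma \ref{b217.3}(2): the intersection identity $G_k\cap I=I_k$ via specialization of the tail variables is the ``easy by projections'' step there, and the identification of $H_{k+1}/H_k$ with an instance of the claim for $Y_k$ and $n+1$ distinguished variables is correct. The problem is that this reduction deposits you exactly where the real difficulty lies, and your base case does not work. In $\bbZ[W]$ with $|W|=n\ge 2$ there is no partial-fraction decomposition of the kind you invoke: distinct irreducibles in several variables satisfy no B\'ezout identity, so the fractions with irreducible-power denominators do not additively generate $R^+_W$, even modulo $\sum_\ell R^+_{W(\ell)}$. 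Concretely take $n=2$, $W=\{x,y\}$, and consider the class of $1/(xy)$. Looking at the polar part along the divisor $x=0$: every $\mu/q^e$ with $q$ irreducible and genuinely involving $y$ is regular there (such a $q$ is not an associate of $x$), every element of $R^+_{\{y\}}$ is regular there, and the polar part at $x=0$ of an element of $R^+_{\{x\}}$ has coefficients in $\bbQ$; but the polar part of $1/(xy)$ at $x=0$ has coefficient $1/y\notin\bbQ$. Hence $1/(xy)$ is not congruent modulo $R^+_{\{x\}}+R^+_{\{y\}}$ to any $\bbZ$-combination of your proposed basis elements, so the proposed basis does not span and the base case is unproved. (The uniqueness of partial fractions you cite is likewise a one-variable phenomenon.)

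The paper avoids both the filtration and any explicit basis. Since $G/I$ is countable and torsion free, by Pontryagin's criterion it suffices to show that for independent $a_1,\dotsc,a_k\in G/I$ only finitely many $m$ divide a primitive integer combination $\sum_i q_ia_i$. This is done by adjoining fresh variables $x_1(1),\dotsc,x_1(n)$, taking the renaming isomorphisms $h_u$ for $u\subseteq\{1,\dotsc,n\}$ and the alternating sums $b_i=\sum_u(-1)^{|u|}h_u(a_i)$: given a witness $\sum_i q_ia_i=ms+\sum_\ell p_\ell$ with $p_\ell\in I_\ell$, the inclusion--exclusion kills each $p_\ell$ (the terms for $u$ and $u\cup\{\ell\}$ cancel because $x(\ell)$ does not occur in $p_\ell$), so the divisibility of $\sum_i q_ib_i$ by $m$ is transferred into $R^+_{V\cup W\cup Y}$, where the independence of the $b_i$ (checked by specializing the new variables to integers) bounds $m$. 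This argument handles all countable $Y$ at once; if you insist on keeping your filtration you still need an argument of this kind for $R^+_W\big/\sum_\ell R^+_{W(\ell)}$, at which point the filtration buys you nothing.
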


\begin{proof}
It suffices to prove:
\mn
\begin{enumerate}
\item[(a)]   $G/I$ is torsion free
\sn
\item[(b)]   if $a_1,\dotsc,a_k \in G/I$ are independent, \then \,
$$\Big\{m \in \bbZ^+ : \text{ there are } \langle q_1,\dotsc,q_k \rangle \in L \text{ such that } m\text{ divides} \sum\limits_{i=1}^k q_i a_i \text{ in } G/I \Big\}$$ 
is finite, where 
$$L = \big\{\langle q_1,\dotsc,q_k \rangle : q_i \in \bbZ\text{, not all 
zero with no common divisor}\big\}.$$
\end{enumerate}
\mn
Let $x_1(q) \in X$ for $q=1,\dotsc,n$ be new distinct variable and let
$V = \{x_1(1),\dotsc,x_1(n)\}$.  For $u \subseteq \{1,\dotsc,n\}$ let us
define $h_u:R_{V \cup W \cup Y} \rightarrow R_{V \cup Y}$ an isomorphism 
$h_u(y) = y$ for $y \in Y$, $h_u(x(q)) = x_1(q)$ if $q \in u$, $h_u(x(q)) = x(q)$ 
if $q \notin u$.  So let $a_1 + I,\dotsc,a_k + I$ be independent. 

Suppose $\langle q_1,\dotsc,q_k \rangle \in L$, $m_0 m_1 \in \bbZ \setminus \{0\}$, 
$m_0 m_1$ divides $\sum\limits_{i} m_0 q_i a_i + I$.  So for some $s \in 
R_{W \cup Y}$ and $p_\ell \in I_\ell$ for $\ell = 1,\dotsc,n$ we have:
$\sum\limits_{i} m_0 q_i a_i = m_0 m_1 s + \sum\limits_{\ell =1,\dotsc,n} 
p_\ell$.  

Let $u$ vary on subsets of $\{1,\dotsc,n\}$, $b_u = \sum\limits_{u}(-1) {}^{|u|} h_u(a_\ell) \in R_{V \cup W \cup Y}$, so
$$\sum\limits_{i} m_0 q_i b_i = \sum\limits_{u}\sum_i m_0 q_i(h_u(a_i)) = 
m_0 m_1 \sum\limits_{u} h_u(s) + \sum\limits_{\ell =1,\dotsc,n} 
\sum\limits_{u} h_u(p_\ell).$$ 

However for each $\ell =1,\dotsc,n$ we have $\sum\limits_{u} h_u(p_\ell)$ is 
zero (as $x(\ell)$ does not appear in it).

So $\sum\limits_{i} m_0 q_i b_i$ is divisible by $m_0 m_0$ in
$R^+_{V \cup W \cup Y}$.  As $R^+_{V \cup W \cup Y}$ is free, it 
suffices to prove $\{b_i:i=1,\dotsc,k\}$ is
independent, equivalently they are linearly independent (over the rationals)
in $Z(Y \cup W \cup V)$.  But, if not, we can substitute suitable numbers for
$x_1(1),\dotsc,x_1(n)$ and get contradiction to 
``$\{a_i + I:i = 1,\dotsc,n\}$ is independent."

That is let $R'$ be a subring of $R_{V \cup W \cup Y}$ generated by
$\bbZ[X] \cup \{\frac{1}{q_1},\dotsc,\frac{1}{q_m}\}$ for some $m,q,\dotsc,
q_\ell \in \bbZ[X]$ such that $h_u(a_i) \in R'$.  Let $g$ be a homomorphism from
$R'$ to $R_{W \cup Y}$ which is the identity on $R_{W \cup Y}$ and maps each
$x_1(q)$ to an integer (so we require from $\big\langle g(x_i(q)):q=1,\dotsc,n
\big\rangle$ to make some finitely many polynomials over the integers nonzero
which is possible).  Now $\ell \in u \subseteq \{1,\dotsc,n\} \Rightarrow
h_u(a_i) \in I_\ell$.  So it is enough to show that $\langle g(b_i):i=1,
\dotsc,k \rangle$ is linearly independent.  But $g(b_i) = \sum\limits_{u}
g(h_u(a_i)) \in gh_\varnothing(a_i) +I = g(a_i) + I = a_i + I$.
\end{proof}
\bigskip

\noindent
(C1) \quad \underline{On Shelah compactness of cardinal}, 
IJM, 31(1978), pgs.34-56 and pg.394 (with S. Ben-David)

We deal with the compactness property of cardinals presented by
Shelah, who proved a compactness theorem for singular cardinals.  We
improve that result in eliminating axiom I there and show a new
application of that theorem together with a straightforward proof of
it for the special case discussed.  We discuss compactness for regular
cardinals and show some independence results: one of them, a part of
which is due to A. Litman, is the independence from ZFC + GCH of the
gap---one two cardinal problem for singular cardinals.
\bigskip

\noindent
(C2) \quad \underline{The Shelah $P$-point independence results}, IJM, 43(1982), 28-48. [C2], (E. Wimmers).

Present the proof of some forcing extension there is no $P$-point.  We
start with $\bfV \models ``2^{\aleph_0} = \aleph_1 + 2^{\aleph_1} =
\aleph^+_2$." Consider a list $\langle D_\alpha:\alpha <
\aleph_2\rangle$ of the filter $D$ on $\bbN$ such that 
${\clP}(\bbN)/D$ satisfies the c.c.c. (and every co-finite subset of
$\bbN$ belongs).  We then force a close enough subset of the CS
product of $\prod\limits_{\alpha} \bbQ_{D_\alpha}$ for appropriate
$\bbQ_{D_\alpha}$.
\bigskip

\noindent
(C3) \quad \underline{The uniformization property for $\aleph_2$}, 
IJM, 36(1980), 248-256. [C3], (C. Steinhorn and J. King).

We deal with the following problem: let 
$S^2_1 = \{\delta < \aleph_2 : \cf(\delta) = \aleph_1\}$, $\eta_\delta$ 
an $\omega_1$-sequence converging to $\delta$, 
$\Phi = \langle \eta_\delta:\eta_\delta \in S^2_1 \rangle$.  
Assuming GCH, can $(\Phi,\aleph_1)$ have the uniformization
property, at least for some $\Phi$ (see \cite{Sh:64})?  The answer is 
positive, so not only GCH $\not\rightarrow \diamondsuit_{S^2_1}$, 
but also GCH is consistent with $S^2_1$ being small.

We start with $V \models GCH$ and use iteration of length $\omega_3$, with
support of power $\le \aleph_1$, of the obvious forcings introducing a
uniformizing functions each condition being an initial segment.  The proof
is like \cite{Sh:64} taking $N \prec ({\cH}(\aleph_1),E)$, closed 
under countable sequences.  But trying to build trees as there we run 
into two problems:
\mn
\begin{enumerate}
\item[(a)] even if we get a tree of candidates for being a conditions,
we cannot find $\aleph_1$-closed $N_1$ with $N_1 \cap \omega_2 \notin S^2_1$
(if we replace $S^2_1$ by $S \subseteq S^2_1$, $S^2_1 \setminus S$ stationary,
our life would be much easier).
\end{enumerate}
\bigskip

\bibliographystyle{amsalpha}
\bibliography{shlhetal}

\end{document}